\newtheoremstyle{definition}
  {7pt}                                                                   
  {7pt}                                                                   
  {}                                                                      
  {}                                                                      
  {\normalfont \bfseries}                                                 
  {\normalfont}                                                           
  { }                                                                     
  {{\normalfont \bfseries \thmname{#1}\thmnumber{ #2}\thmnote{ (#3)}}}    
\newtheoremstyle{theorems}
  {10pt}                                                                  
  {10pt}                                                                  
  {\itshape}                                                              
  {}                                                                      
  {\normalfont \bfseries}                                                 
  {\normalfont}                                                           
  { }                                                                     
  {{\normalfont \bfseries \thmname{#1}\thmnumber{ #2}\thmnote{ (#3)}}}    
\theoremstyle{definition}
\newtheorem{mydef}[subsection]{Definition}
\newtheorem{remark}[subsection]{Remark}
\theoremstyle{theorems}
\newtheorem{theorem}[subsection]{Theorem}
\newtheorem*{theorem*}{Theorem.}
\newtheorem{prop}[subsection]{Proposition}
\newtheorem{lemma}[subsection]{Lemma}
\newtheorem{corollary}[subsection]{Corollary}
\newcounter{paranum}[section]
\renewcommand{\theparanum}{\thesection.\arabic{paranum}}
\newcommand{\Paragraph}{\vspace{10pt}\noindent \refstepcounter{paranum}\textbf{\theparanum}\textbf}
 \DeclareMathOperator{\closess}{cl_{ess}}
\newcommand{\Oplam}{\mbox{\Large $\curlywedge$}}
\newcommand{\DDD}{\mbox{D}}
\newcommand{\vol}{\mbox{Vol}}
\newcommand{\Hmm}[1]{\leavevmode{\marginpar{\tiny%
$\hbox to 0mm{\hspace*{-0.5mm}$\leftarrow$\hss}%
\vcenter{\vrule depth 0.1mm height 0.1mm width \the\marginparwidth}%
\hbox to 0mm{\hss$\rightarrow$\hspace*{-0.5mm}}$\\\relax\raggedright
#1}}}
\newcommand{\SSS}{\mathbb{S}}
\newcommand{\equi}{\ensuremath{\Leftrightarrow}}
\newcommand{\follows}{\ensuremath{\Rightarrow}}
\newcommand{\ld}{\ensuremath{,\ldots,}}
\newcommand{\ssq}{\ensuremath{\subseteq}}
\newcommand{\smin}{\ensuremath{\setminus}}
\newcommand{\eps}{\ensuremath{\varepsilon}}
\newcommand{\wh}{\ensuremath{\widehat}}
\newcommand{\htop}{\ensuremath{h_{\mathrm{top}}}}
\newcommand{\interior}{\ensuremath{\mathrm{int}}}
\newcommand{\closure}{\ensuremath{\mathrm{cl}}}
\newcommand{\nfolge}[1]{\ensuremath{(#1)_{n\in\mathbb{N}}}}
\newcommand{\kfolge}[1]{\ensuremath{(#1)_{k\in\mathbb{N}}}}
\newcommand{\jfolge}[1]{\ensuremath{(#1)_{j\in\mathbb{N}}}}
\newcommand{\twovector}[2]{\ensuremath{\left(\begin{array}{c} #1 \\
        #2 \end{array}\right)}}
\newcommand{\alphlist}{\begin{list}{(\alph{enumi})}{\usecounter{enumi}\setlength{\parsep}{2pt}
      \setlength{\itemsep}{1pt} \setlength{\topsep}{5pt}
      \setlength{\partopsep}{3pt}}}
\newcommand{\arablist}{\begin{list}{(\arabic{enumi})}{\usecounter{enumi}\setlength{\parsep}{2pt}
          \setlength{\itemsep}{1pt} \setlength{\topsep}{5pt}
          \setlength{\partopsep}{3pt}}}
\newcommand{\romanlist}{\begin{list}{(\roman{enumi})}{\usecounter{enumi}\setlength{\parsep}{2pt}
              \setlength{\itemsep}{1pt} \setlength{\topsep}{5pt}
              \setlength{\partopsep}{3pt}}}
\newcommand{\Romanlist}{\begin{list}{(\Roman{enumi})}{\usecounter{enumi}\setlength{\parsep}{2pt}
              \setlength{\itemsep}{1pt} \setlength{\topsep}{5pt}
              \setlength{\partopsep}{3pt}}}
\newcommand{\bulletlist}{\begin{list}{$\bullet$}{\setlength{\parsep}{2pt}
                \setlength{\itemsep}{1pt} \setlength{\topsep}{5pt}
                \setlength{\partopsep}{3pt}\setlength{\leftmargin}{15pt}}}
\newcommand{\Alphlist}{\begin{list}{(\Alph{enumi})}{\usecounter{enumi}\setlength{\parsep}{2pt}
      \setlength{\itemsep}{1pt} \setlength{\topsep}{5pt}
      \setlength{\partopsep}{3pt}}}
 \newcommand{\listend}{\end{list}}
\newcommand{\T}{\ensuremath{\mathbb{T}}}
\newcommand{\N}{\ensuremath{\mathbb{N}}}
\newcommand{\R}{\ensuremath{\mathbb{R}}}
\newcommand{\Z}{\ensuremath{\mathbb{Z}}}
\newcommand{\Q}{\ensuremath{\mathbb{Q}}}
\newcommand{\C}{\ensuremath{\mathbb{C}}}
\newcommand{\Proj}{\ensuremath{\mathbb{P}}}
\newcommand{\cD}{\mathcal{D}}
\newcommand{\cL}{\mathcal{L}}
\newcommand{\cN}{\mathcal{N}}
\newcommand{\cW}{\mathcal{W}}
\newcommand{\jLim}{\ensuremath{\lim_{j\rightarrow\infty}}}
\title[Model sets with positive entropy]{Model sets with positive entropy in Euclidean cut and project schemes}
\author{T.~J\"ager, D.~Lenz and C.~Oertel}
\begin{document}

 \maketitle

 \begin{abstract}
  We construct model sets arising from cut and project schemes in Euclidean
  spaces whose associated Delone dynamical systems have positive topological
  entropy. The construction works both with windows that are proper and with
  windows that have empty interior. In a probabilistic construction with
  randomly generated windows, the entropy almost surely turns out to be
  proportional to the measure of the boundary of the window.\medskip

\noindent\textsc{Resum\'e.}\ \ On construit des ensembles de Delone euclidiens
obtenus par coupe et projection de sorte que l'entropie du syst\`eme dynamique
associ\'e soit strictement positive. La construction permet d'utiliser une
fen\^etre propre ou d'int{\'e}rieur vide. Dans une construction probabiliste, pour
presque tout param\^etre, l'entropie est proportionnelle \`a la mesure de la
fronti\`ere de la fen\^etre.

  \noindent{\em 2010 Mathematics Subject Classification.} 52C23 (primary),
  37B50, 37B10 (secondary).
 \end{abstract}

 \section{Introduction}

In the last decades, aperiodic order -- often referred to as the
mathematical theory of quasicrystals -- has developed into a broad
and highly active field of research, see e.g. \cite{BG,KLS} for
recent books dealing with this topic. In this context, the main
attention has been given to models with a strong degree of
long-range order. In particular, there is nowadays a fairly good
understanding of the relations between pure point diffraction --
characterising quasicrystals from the physical viewpoint -- and
purely discrete dynamical spectrum, which has emerged as one of the
major tools in the mathematical analysis of long-range aperiodic
order.

In this paper, we have a slightly different focus and construct
models that may be considered as intermediate between strong
long-range order and disorder. More precisely, we introduce a broad
family of model sets, produced by cut and project schemes in
Euclidean space, whose associated Delone dynamical systems exhibit a
high degree of chaoticity, including positive topological entropy.
At the same time, they still inherit a certain degree of long-range
order, which is built into the underlying cut and project scheme and
manifests itself in a non-vanishing discrete part of the dynamical
spectrum as well as in  minimality. Although we restrict here to
study the basic dynamical properties, we  hope that the constructed
models may be instrumental in understanding the transition from
quasicrystalline to amorphous configurations in solid matter. We
note  several recent works dealing with similar  model sets with
'thick boundary' of the window, based on a variety of different
methods \cite{BHS,BJL,HP,HR,KR}. The reader may take that as an
indication for the timeliness of the endeavor.

We will discuss more specifically how the present paper relates to other works
and contributes to the emerging general theory towards the end of this section,
after we have introduced the necessary notation. Here, we already note that - to
the best of our knowledge - it provides the first examples of model sets with
positive entropy based on Euclidean cut and project schemes.

\smallskip

A {\em cut and project scheme (CPS)} is a triple $(G,H,\cL)$
consisting of locally compact abelian groups  $G$,  called
\textit{direct space},  and $H$, called \textit{internal space}, and
a discrete co-compact subgroup ({\em lattice}) $\cL\ssq G\times H$
such that the canonical projection $\pi_G:G\times H\to G$ is
one-to-one and the canonical projection $\pi_H : G\times H\to H$ has
dense image. This framework goes back to Meyer's influential book
\cite{Mey} and has later been developed in \cite{Moo97,Moo00,
schlottmann:GeneralizedModelSetsAndDynamicalSystems}.  In this paper
we will always take  $G = \R^N$ and we will assume $H$ to be
$\sigma$-compact (i.e.\ a countable union of compact sets) and
metrizable. Our main application concerns the case $G = H = \R$. So,
the reader  may also well think  from the very beginning of $H$ as
just another Euclidean space $\R^M$ (where $M\neq N$ is possible).

Given a relatively compact subset $W\ssq H$, which is called a {\em
window} in this context, such a CPS produces a uniformly discrete
subset of $G$ via
\[
\Oplam(W) \ = \ \pi_G\left(\cL\cap (G\times W)\right).
\]

An alternative way to define $\Oplam(W)$ is to introduce the {\em
star-map}. Set $L:=\pi_G(\cL)$  and $L^*:=\pi_H(\cL)$. Then, the
star map $*:L\to L^*$ is  given by $\ell\mapsto\ell^*$, where
$\ell^*$ is uniquely defined by $(\ell,\ell^*)\in\cL$ due to the
injectivity of $\pi_{G | \cL}$. Then, we have
$$\Oplam(W)=\{\ell\in L\mid
\ell^*\in W\}.$$

If $W$ has non-empty interior, then $\Oplam(W)$ is called a {\em
model
  set}, in the general case  it is called a {\em weak model set}. We will be
  concerned with  model sets whose window has  a further 'smoothness'
  feature:  A window $W\ssq H$ is called \emph{proper} (or sometimes
   \emph{topologically regular}) if
   $$\closure(\interior(W)) = W.$$
The associated model set will then also be referred to as
\textit{proper model set}. Note that any proper window is compact.

A model set is always Delone set (see the next section for more detailed
definitions and a discussion of further facts concerning CPS and model sets).


Given a window $W\ssq H$ (which will mostly be compact in our considerations
below), we can associate a dynamical system to $\Oplam(W)$ by considering the
$\R^N$-action $(s,\Lambda)\mapsto \Lambda-s$ on the {\em hull} of
$\Oplam(W)$. This hull is given as $\Omega(\Oplam(W)) =
\closure(\{\Oplam(W)-s\mid s\in\R^N\}$, where the closure is taken in a suitable
topology (defined below).  The properties of this dynamical system depend
crucially on the boundary of the window $W$.

If $W$ is proper and the boundary of $W$ has Haar measure zero, then
the dynamical system $(\Omega(\Oplam(W)),\R^N)$ is (measurably)
isomorphic to the Kronecker flow on the torus $\T = (\R^N\times
H)/\cL$ defined by $\omega : \R^N\times\T\to\T,\ (s,\xi)\mapsto \xi
+ [s,0]_\cL$ and is therefore uniquely ergodic with purely discrete
dynamical spectrum
\cite{schlottmann:GeneralizedModelSetsAndDynamicalSystems} and zero
topological entropy \cite{BLR}. This case has attracted most
attention in recent years. In fact, it seems fair to say that
\textit{regular model sets}, i.e.\ sets of the form $\Oplam (W)$ for
proper $W$ whose boundary has measure zero, are the prime examples
for quasicrystals.  In particular, substantial efforts have been
spent over the years to prove pure point diffraction for regular
model sets, see e.g. \cite{Hof,
schlottmann:GeneralizedModelSetsAndDynamicalSystems}. By now this
pure pointedness is well understood and   three different approaches
have been developed: The approach of \cite{Hof} via Poisson
summation formula has recently been extended to a very general
framework in \cite{RS}. The result of
\cite{schlottmann:GeneralizedModelSetsAndDynamicalSystems} can be
seen within the context of the equivalence between purely discrete
dynamical spectrum and pure point diffraction, proven in this
setting in \cite{leeEtAl:PurePointDynamicalAndDiffractionSpectra}
and later generalized in various directions in e.g.
\cite{BL,Goue,LS,LM}. Finally,  pure point spectrum can also be
shown using  almost periodicity \cite{BM}, see also \cite{Stru}.


Conversely, the case of windows with `thick boundary', in the sense of positive
Haar measure, is not as well understood. A general idea in this context is that
thickness of the boundary should imply positive topological entropy and failure
of unique ergodicity. In fact, corresponding conjectures have been brought
forward by Moody, see \cite{HR} for discussion, and Schlottmann
\cite{schlottmann:GeneralizedModelSetsAndDynamicalSystems}. These conjectures
are supported by prominent examples. Indeed, for the well-known example of
visible lattice points the associated dynamical system is far from being
uniquely ergodic and has positive topological entropy \cite{BMP,HP}. This system
has still pure point diffraction \cite{BMP} and pure point dynamical spectrum if
it is equipped with a natural ergodic measure \cite{HB}.  Existence of such a
canonical ergodic measure for general model sets with thick boundary has
received attention recently, see \cite{BHS} for an approach based on a maximal
density condition and \cite{KR} for an rather structural approach. Quite
remarkably, all these model sets with maximal density still have pure point
diffraction and pure point dynamical spectrum with respect to the canonical
measure \cite{BHS}. Note, however, that the eigenfunctions will in general not
be continuous anymore. In this context, a general upper bound on topological
entropy has been established in \cite{HR}. Given this support for the mentioned
conjectures, the recent findings in \cite{BJL} may seem surprising as they provide
examples of proper model sets with thick boundary which are still uniquely
ergodic (and minimal) with topological entropy zero. At the same time \cite{BJL}
also provides some examples of proper model sets with minimal dynamical systems
of positive entropy lacking unique ergodicity. All examples of \cite{BJL} are
based on Toeplitz systems.

In all  examples in the preceding discussion, where the topological
entropy was shown to be positive,  the internal space $H$ is not an
Euclidean space but has a rather more complicated structure (being a
$p$-adic space in the case of the visible lattice points and being
an odometer in the case of the Toeplitz systems). In the  present
paper we provide examples  of model sets with positive entropy based
on Euclidean internal space.

For the sake of simplicity, we will here restrict to Euclidean CPS with
one-dimensional internal space $H=\R$. In principle, similar constructions can
be carried out with higher-dimensional internal group, see
Section~\ref{HigherDimInternal} for a brief discussion.  Then, a lattice with
the above properties is of the form $\cL=A(\Z^{N+1})$, where
$A\in\mathrm{GL}(N+1,\R)$ satisfies the two conditions that $\pi_1:\R^{N+1}\to
\R^N$ is injective on $\cL$ and $\pi_2:\R^{N+1}\to \R$ maps $\cL$ to a dense
set. Note that this is certainly a generic condition on $A$ this is always
satisfied whenever the entries of $A$ are linearly independent over $\Q$.  We
call such $\cL$ an {\em irrational lattice}.
The situation can be summarized in the following diagram.
 \begin{center}
  \begin{tabular}{ccccc}
   $\mathbb{R}^N$ & $\overset{\pi_1}{\longleftarrow}$ & $\mathbb{R}^N \times
  \mathbb{R}$ & $\overset{\pi_2}{\longrightarrow}$ & $\mathbb{R}$ \\
   $ \cup $ & & $\cup$ & & $\cup$ \\
   $L$ & $\overset{1-1 }{\longleftarrow} $ & $\cL = A(\mathbb{Z}^{N+1})$
   & $\overset{dense}{\longrightarrow}$ & $L^*$ \\
  \end{tabular}
  \end{center}

In this setting we construct  examples with positive topological
entropy (in fact, the maximal entropy possible given the bound in
\cite{HR}) and lack of unique ergodicity. At the same time these
examples still are minimal and have  a relatively dense set of
continuous eigenvalues. So, our examples share positive entropy and
lack of unique ergodicity with the examples of \cite{BMP,HP} while
they differ from these examples by having the additional regularity
feature of minimality and  a dense set of continuous eigenvalues. On
the other hand our examples share minimality and positive entropy
with the mentioned examples of \cite{BJL} but differ from these
examples by being based on a Euclidean CPS.

To us, a main achievement of our construction is that it is rather
direct and transparent. By this we hope that it can serve as a tool
for further investigations as well.

In order to to give a flavor of our results, we will next state one main theorem
(an extended version of which is given below in Theorem
\ref{t:random_window-extended}), which focuses a probabilistic model with
`random' window. Deterministic constructions are given as well, in Section
\ref{section:WindowsConsistingOfCantorSets} for the case of model sets and in
Section \ref{section:WindowsWithEmptyInterior} for the case of weak model
sets. The latter has started to attract increasing attention due to its
relations to number theory, compare discussion above and \cite{HR,BHS}.

\begin{theorem} \label{t.random_windows}
  Suppose $\cL\ssq \R^{N+1}$ is an irrational lattice and $C$ is a Cantor set of
  positive Lebesgue measure in $[0,1]$.  Let $(G_n)_{n\in\N}$ be a numbering of
  the bounded connected components of $\R\smin C$ and
  $\Sigma^+=\{0,1\}^\N$. Denote by $\Proj$ the Bernoulli distribution on
  $\Sigma^+$ with equal probability $1/2$ for each symbol and define
  \[
      W(\omega) \ = \ C \cup \bigcup_{n\in\N:\omega_n=1} G_n \ ,
  \]
where $\omega\in\Sigma^+$. Then for $\Proj$-almost every $\omega \in \Sigma^+$
the set $W(\omega)$ is proper and the dynamical system
$(\Omega(\Oplam(W(\omega)+\vartheta)),\R^N)$ has positive topological entropy
for all $\vartheta \in\R$ and is minimal for $\vartheta$ from a residual subset
$\Theta \ssq\R$ (depending on $\omega$).
\end{theorem}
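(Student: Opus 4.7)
The theorem has three independent claims---properness of $W(\omega)$, positive topological entropy for all $\vartheta\in\R$, and minimality for $\vartheta$ in a residual set $\Theta$---and I would prove them separately. The first is a Borel--Cantelli argument on $\omega$, the second invokes the deterministic entropy construction of the paper (of which the extended Theorem \ref{t:random_window-extended} is the probabilistic refinement), and the third is a soft Baire-category argument in $\vartheta$ based on the countability of $L^\ast$.

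\textbf{Properness.} Let $V(\omega) := \bigcup_{n:\omega_n=1} G_n$. Since $C$ has empty interior, $\interior(W(\omega)) = V(\omega)$, and $W(\omega)$ is closed (any convergent sequence in $W(\omega)$ either stays in a single $G_n$ whose closure lies in $G_n \cup C \ssq W(\omega)$, or moves through gaps whose lengths go to $0$ and whose endpoints lie in $C$). Hence properness reduces to $C \ssq \closure(V(\omega))$. Fix a countable basis $(U_k)_{k\in\N}$ of open intervals of $\R$; whenever $U_k \cap C \neq \emptyset$ the set $\cN_k := \{n : G_n \ssq U_k\}$ is infinite, because $C$ is nowhere dense while $U_k \cap C$ is nonempty, so $U_k$ must contain infinitely many gaps accumulating on $U_k \cap C$. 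Since the events $\{\omega_n = 1\}_{n\in\cN_k}$ are independent of probability $1/2$, Borel--Cantelli gives $\omega_n = 1$ for infinitely many $n \in \cN_k$ almost surely. Intersecting over the countable family $\{k : U_k \cap C \neq \emptyset\}$ yields, for $\Proj$-a.e.\ $\omega$, that every open set meeting $C$ also meets $V(\omega)$, which is $C \ssq \closure(V(\omega))$. In particular, for such $\omega$ one has $\partial W(\omega) = W(\omega)\smin\interior(W(\omega)) = C$.

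\textbf{Positive entropy.} For those $\omega$ from the previous step, the boundary $\partial W(\omega) = C$ has positive Lebesgue measure. The main deterministic construction of the paper, which I would invoke via Theorem \ref{t:random_window-extended}, produces a lower bound on the topological entropy of $\Omega(\Oplam(W))$ proportional to $\Leb(\partial W)$; together with the upper bound from \cite{HR} this delivers the maximal possible positive entropy. Translating the window by any $\vartheta \in \R$ replaces $\partial W(\omega)$ by $C+\vartheta$, whose Lebesgue measure is unchanged, so positive entropy persists for every $\vartheta$.

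\textbf{Minimality on a residual set, and the main obstacle.} For a proper model set in an irrational Euclidean CPS, I would use the standard sufficient condition for minimality that $L^\ast \cap (\partial W + \vartheta) = \emptyset$ (which should be established in an earlier section of the paper as part of the general machinery, in the spirit of \cite{schlottmann:GeneralizedModelSetsAndDynamicalSystems}). Under the previous identification $\partial W(\omega) = C$, this condition fails only when $\vartheta \in L^\ast - C := \bigcup_{\ell^\ast \in L^\ast}(\ell^\ast - C)$. Since $L^\ast$ is countable and $C$ (hence $-C$) is closed and nowhere dense, $L^\ast - C$ is meager in $\R$, so $\Theta := \R \smin (L^\ast - C)$ is a residual set on which the dynamical system is minimal. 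The main obstacle of the whole argument is the entropy lower bound that I have black-boxed: one has to produce exponentially many distinct finite patches of $\Oplam(W(\omega))$ that are forced by the independent bits $\omega_n$ encoding the presence of gaps in the boundary $C$, with growth rate matching the \cite{HR} upper bound. Once this central estimate is in place, the probabilistic parts above (Borel--Cantelli, meagreness) are soft.
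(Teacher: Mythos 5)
Your properness argument (Borel--Cantelli over a countable basis of intervals meeting $C$) and your minimality argument (meagreness of $L^\ast - C$, then genericity $\Rightarrow$ repetitivity $\Rightarrow$ minimality) are both correct and essentially coincide with the paper's Lemma \ref{lem:windowproper} and with the Baire-category argument of Lemma \ref{lem:properandgeneric} combined with Lemma \ref{lem:genericrepetitivity}(b) and Lemma \ref{l.basic_dynamical_properties}(b). The problem is the entropy step, which you have black-boxed, and the justification you offer for the black box rests on a false principle. You assert that the construction ``produces a lower bound on the topological entropy proportional to $\Leb(\partial W)$'' and that translating $W$ preserves $|\partial W|$ and hence positive entropy. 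But positivity of $|\partial W|$ alone does \emph{not} imply positive entropy: the introduction of this very paper recalls that \cite{BJL} constructs proper model sets whose windows have boundary of positive measure and whose hulls are nevertheless uniquely ergodic with topological entropy zero. So no argument that uses only $|\partial W(\omega)|=|C|>0$ can close the gap. Moreover, invoking Theorem \ref{t:random_window-extended} is circular, since that theorem is the extended version of the very statement being proved.

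What is actually needed --- and what constitutes the core of the paper's proof --- is a second, essential use of the randomness of $\omega$, beyond the properness step: Lemma \ref{lem:independenceInFiniteVariables} shows that for $\Proj$-a.e.\ $\omega$ the window $W(\omega)+h$ is \emph{locally topologically independent} in $L^\ast\cap(\tilde C+h)$, i.e.\ for every finite set $F$ of such points and every $a\in\{0,1\}^F$ there are gaps of $C$ accumulating at $0$ whose inclusion/exclusion pattern in $W(\omega)$ realises $a$; the proof selects pairwise distinct gap indices $n^x_j$ so that the bits $\omega_{n^x_j}$ form an independent family and applies Borel--Cantelli again. Combined with the density statement of Lemma \ref{l.sequence} (uniform distribution), this feeds into Lemma \ref{lem:localTopologicalIndependence} to produce an embedded fullshift in a fibre of the torus parametrisation, and Lemma \ref{lem:esimpliespositiveh} then gives the lower bound $|C|\log 2/|\det A|$, matched by the upper bound of \cite{HR}. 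The reason entropy persists for \emph{all} $\vartheta$ is not translation-invariance of $|\partial W|$ but the fact that local topological independence yields independence with respect to $L^\ast+(h-\vartheta)$ for every $\vartheta$, since every translate of the dense set $L^\ast$ is dense. Your proposal contains none of this, so the central claim of the theorem remains unproved.
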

\begin{remark} \label{rem:kommentar-hauptergebnis}
  \alphlist
 \item In fact, the topological entropy attains the   upper bound provided in \cite{HR},  which is
given in terms of the measure of $\partial W(\omega)$ and the
density of the lattice $\cL$, see Theorem
\ref{t:random_window-extended}.

\item The existence of the  residual subset $\Theta\ssq \R$ such that
  for all $\vartheta\in \Theta$ the system $(\Omega(\Oplam(W(\omega)+\vartheta)),\R^N)$
  is minimal is a consequence of  general (and well-known)  theory of model sets and has nothing to do with our (random) setting.
\item Due to the properness of the window our systems fibre  over a torus, i.e. allow for a torus as a factor.
This has some consequences:  For one thing, by abstract results this
then implies that the entropy comes from single fibres (see Remark
\ref{rem:Bowen} below). In fact, our proof directly exhibits fibres
carrying the entropy. Also, having this factor implies that our
examples have a relatively dense set of continuous eigenvalues, see
Remark \ref{rem:continuousef}.

\item Our results also show that if $|C|>1/2$, then for the set of $\omega$ of
  full measure above and any $\vartheta\in\R$ the dynamical system
  $(\Omega(\Oplam(W(\omega)+\vartheta)),\R)$ is not uniquely ergodic (see
  Theorem \ref{t:random_window-extended}).  \listend
\end{remark}

The reason for the positive toplogical entropy of
$(\Omega(\Oplam(W)+\vartheta),\R)$ is the existence of a large
'random component' in the hull, which may be of intrinsic conceptual
interest. We say $\Omega(\Oplam(W))$ {\em contains an  embedded
  fullshift}, if there exists $S \ssq \R^N$ of positive asymptotic density
  and a uniformly discrete $U\ssq \R^N$ such that for any subset $S'$ of $S$ there  exists
  $\Gamma\in\Omega(\Oplam(W))$ with $\Gamma \ssq U$ and
$$S' = \Gamma \cap S.$$
This means that we may think of the elements of $S$ as positions of
points (or atoms) which may be switched on or off completely
independently of each other, without leaving the hull (but there is
no control on what happens outside of $S$ at the same time). Details
are discussed in the first part of Section
\ref{section:EmbeddedSubshiftsandTopologicalIndependence}.  Embedded
fullshifts are closely related to the local structure of the window
$W$ (or its translate $W+\vartheta$) around the points in $L^*$. In
later parts of Section
\ref{section:EmbeddedSubshiftsandTopologicalIndependence}, we also
introduce the notion of local independence of $W$ with respect to
subsets of $L^*$ to establish criteria for the existence of embedded
fullshifts. Depending on the context, either a topological (Lemma
\ref{lem:localTopologicalIndependence}) or a metric version (Lemma
\ref{lem:localMetricIndependence}) of this concept can be applied. A
discussion of failure of unique ergodicity in the presence of
embedded subhifts is given in Section
\ref{section:FurtherPropertiesOfDDSWithPositiveTopologicalEntropy}.

The proof of Theorem~\ref{t.random_windows} is then given in Section
\ref{section:ProbabilisticallyWindowsAndPositiveEntropy}. In fact, Theorem
\ref{t:random_window-extended} in that section is an extended version of
Theorem~\ref{t.random_windows} including parts of Remarks
\ref{rem:kommentar-hauptergebnis}.
Section~\ref{section:WindowsConsistingOfCantorSets} then provides examples of
deterministic windows that equally lead to positive entropy. While this
construction is slightly more technical, it demonstrates that the randomness in
the definition of $W(\omega)$ above is not a key ingredient of the
procedure. Moreover, this also sets the ground for the construction of weak
model sets (whose window has empty interior) with positive entropy, which is
carried out in Section~\ref{section:WindowsWithEmptyInterior}.

\medskip

{\bf Acknowledgments.} The authors would like to thank an anonymous referee,
whose thoughtful remarks have led to substantial improvements of the paper.

TJ is supported by a Heisenberg grant of the German Research Council (DFG-grant
OE 538/6-1). Part of this work was done while DL was visiting the department of
mathematics at Geneva university. He would like to thank the department for its
hospitality.

 \section{Preliminaries}
  \label{section:CPSandDDS}
In this section we discuss the necessary background from the theory
of point sets and their associated dynamical systems. The material
is essentially well-known. For the convenience of the reader we
include some proofs.

\medskip

  \Paragraph{ Delone sets.}
     A set $\Lambda \ssq \mathbb{R}^N$ is called \emph{uniformly discrete} if
   there exists a real number $r > 0$ such that
    \begin{equation} \label{e.uniform_discreteness}
     \|x- y\| \ \geq r \ \quad \text{ for all } x,y \in \Lambda \
    \end{equation}
    where $\|\cdot\|$ denotes the Euclidean norm.
    The set is called \emph{relatively dense} if there exists a real number $R >
    0$ such that
    \begin{equation} \label{e.relative_denseness}
      B_R(x) \cap \Lambda \ \neq \  \emptyset  \text{ for all } x \in \mathbb{R}^N
   \ ,
    \end{equation}
    where $B_R (x)$ denotes the closed ball of radius $R$ around $x$.
    We call $\Lambda$ a \emph{Delone set} if it is uniformly discrete and
    relatively dense in $\mathbb{R}^N$. We say $p \in \mathbb{R}^N$ is a
    \emph{period} of $\Lambda$ if $\Lambda + p = \Lambda$ and call $\Lambda$
    \emph{aperiodic} if $p=0$ is the only period.    Given a Delone set $\Lambda$, let $x \in \Lambda$ and $\varrho > 0$. Then the pair
   $(P(\varrho,x),\varrho)$ with
    $$P(\varrho,x) := (\Lambda -x) \cap B_\varrho(0) $$ is called a \emph{$\varrho$-patch of
      $\Lambda$ in $x$}. The \emph{set of all patches} is given by
    $$\mathcal{P}(\Lambda) = \{ (P(\varrho,x),\varrho) \mid x \in \Lambda, \varrho > 0 \}. $$ Note
   that this definition works also for discrete sets which are not Delone.  The
   set $\Lambda$ has \emph{finite local complexity} (or \emph{(FLC)} for short) if
   \begin{equation}
    \sharp \{ (\Lambda - x) \cap B_\varrho(0) \mid x \in \Lambda \} < \infty \tag{FLC}
   \end{equation}
   for all $\varrho > 0$. This assumption is equivalent to various other properties:
  \begin{lemma}[\cite{lagarias:GeometricModelsForQuasicrystalsI}]
   \label{lem:DeloneFLCEquivalences}
   Let $\Lambda$ be a Delone set. Then the following statements are equivalent:
   \begin{enumerate}[(i)]
    \item $\Lambda$ has (FLC);
    \item $\sharp \{ (\Lambda - x) \cap B_{2R}(0) \mid x \in \Lambda \} <
      \infty$, where $R$ is as in (\ref{e.relative_denseness});
    \item $\Lambda - \Lambda$ is closed and discrete.
   \end{enumerate}
  \end{lemma}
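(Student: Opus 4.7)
The plan is to prove the chain of implications (i) $\Rightarrow$ (ii) $\Rightarrow$ (i) $\Rightarrow$ (iii) $\Rightarrow$ (i), exploiting that uniform discreteness makes every local patch finite, while relative denseness lets us control arbitrary large patches from the single scale $2R$.

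The implication (i) $\Rightarrow$ (ii) is immediate by specializing the definition of (FLC) to $\varrho = 2R$. For the converse (ii) $\Rightarrow$ (i), fix an arbitrary $\varrho > 0$ and cover $\overline{B_\varrho(0)}$ by finitely many closed balls $B_R(c_1),\ldots,B_R(c_k)$ with fixed centers $c_j \in \R^N$. Given any $x \in \Lambda$, relative denseness of $\Lambda - x$ yields, for each $j$, a point $y_j(x) \in (\Lambda-x)\cap B_R(c_j)$, and then $B_R(c_j) \ssq B_{2R}(y_j(x))$, so
\[
  (\Lambda - x) \cap B_R(c_j) \ = \ \bigl((\Lambda - x - y_j(x)) \cap B_{2R}(0)\bigr) \cap B_R(c_j) - \ldots
\]
up to a translation by $y_j(x)$. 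The key point is that this piece of the patch is determined by the $2R$-patch around the point $x + y_j(x) \in \Lambda$, of which by assumption there are only finitely many. Taking the union over $j=1,\ldots,k$ recovers $(\Lambda - x)\cap B_\varrho(0)$, so there are at most finitely many such patches. (One also has to account for the finitely many possible positions $y_j(x)$ inside $B_R(c_j)$, which is a finite set by uniform discreteness.)

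For (i) $\Rightarrow$ (iii) I would observe the identity
\[
  (\Lambda - \Lambda) \cap B_\varrho(0) \ = \ \bigcup_{x\in\Lambda}\bigl((\Lambda - x) \cap B_\varrho(0)\bigr).
\]
By (FLC) the union on the right ranges over finitely many distinct subsets of $B_\varrho(0)$, and each such subset is finite (since $\Lambda$ is uniformly discrete, so any $\varrho$-patch contains at most $C(\varrho,r)$ points). Hence $(\Lambda-\Lambda)\cap B_\varrho(0)$ is finite for every $\varrho > 0$, which is precisely the statement that $\Lambda - \Lambda$ is closed and discrete. Conversely, for (iii) $\Rightarrow$ (i), note that every patch $(\Lambda - x)\cap B_\varrho(0)$ is a subset of the finite set $(\Lambda-\Lambda)\cap B_\varrho(0)$, so only finitely many patches are possible.

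The routine parts are the identity for $\Lambda - \Lambda$ and the covering argument. The only step requiring a little care is (ii) $\Rightarrow$ (i), where one has to make sure that the finite choice of $2R$-patches really does parametrize the $\varrho$-patches; the rest are essentially bookkeeping, so I expect no serious obstacle.
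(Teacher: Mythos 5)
The paper itself contains no proof of this lemma (it is quoted from Lagarias), so your proposal has to stand on its own. Most of it does: (i) $\Rightarrow$ (ii) is indeed trivial, and the implications (i) $\Rightarrow$ (iii) $\Rightarrow$ (i) are correct as you state them --- the identity $(\Lambda-\Lambda)\cap B_\varrho(0)=\bigcup_{x\in\Lambda}\bigl((\Lambda-x)\cap B_\varrho(0)\bigr)$ together with the cardinality bound on each patch coming from uniform discreteness gives that $(\Lambda-\Lambda)\cap B_\varrho(0)$ is finite for every $\varrho$ under (FLC), and conversely every $\varrho$-patch is one of the finitely many subsets of the finite set $(\Lambda-\Lambda)\cap B_\varrho(0)$.

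The genuine gap is in (ii) $\Rightarrow$ (i), exactly at your parenthetical remark. You write $(\Lambda-x)\cap B_R(c_j)$ as $\bigl(P+y_j(x)\bigr)\cap B_R(c_j)$ with $P$ one of the finitely many $2R$-patches, but to conclude that only finitely many such pieces occur you also need the translation vectors $y_j(x)$ to range over a \emph{finite} set as $x$ runs through $\Lambda$. Uniform discreteness does not give this: it bounds $\sharp\bigl((\Lambda-x)\cap B_R(c_j)\bigr)$ for each \emph{fixed} $x$, whereas the set of possible positions is $\{y_j(x)\mid x\in\Lambda\}\ssq(\Lambda-\Lambda)\cap B_R(c_j)$, and finiteness of that set is precisely the discreteness of $\Lambda-\Lambda$ in a ball which, for large $\varrho$, lies far from the origin --- i.e.\ essentially assertion (iii), the thing being proved. (For those $c_j$ with $B_R(c_j)\ssq B_{2R}(0)$ one can quote (ii), since $(\Lambda-\Lambda)\cap B_{2R}(0)$ is a finite union of the finitely many $2R$-patches; but for large $\varrho$ most of the $c_j$ are not of this kind, so the argument is circular there.) The standard repair is to bootstrap outward in the radius: from (ii) one first deduces that $D:=(\Lambda-\Lambda)\cap B_{2R}(0)$ is finite, and then one joins any $x,y\in\Lambda$ by a chain $x=w_0,w_1,\ldots,w_m=y$ in $\Lambda$ with all increments $w_{i+1}-w_i$ in (a slight enlargement of) $D$ and with $m$ controlled by $\|x-y\|/R$, by placing points along the segment $[x,y]$ and replacing each by a nearby point of $\Lambda$ via relative denseness. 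This shows $(\Lambda-\Lambda)\cap B_\varrho(0)$ is contained in an $m$-fold sumset of a finite set, hence finite; that proves (ii) $\Rightarrow$ (iii), and (i) then follows from your own (correct) argument for (iii) $\Rightarrow$ (i). Without some such induction on the radius, the covering argument does not close.
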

  If $\Lambda$ is a Delone set with $\Lambda - \Lambda$  uniformly discrete, then $\Lambda$ is called a
  \emph{Meyer set}. Being a Meyer set is a notably strong property, and in
  particular implies (FLC) by Lemma \ref{lem:DeloneFLCEquivalences}
  (iii).

A Delone set $\Lambda$ is \emph{repetitive} if for all $(P,\varrho)
\in \mathcal{P}(\Lambda)$ the set $$ \{ x \in \Lambda \mid
P(\varrho,x) = P \}
$$ is relatively dense in $\mathbb{R}^N$. It has \emph{uniform patch
frequencies} (or \emph{(UPF)} for short) if for all patches $(P, \varrho) \in
\mathcal{P}(\Lambda)$ the limit
   \begin{equation} \label{e.UPF}
     \nu(P,x)\  = \  \lim_{n \rightarrow \infty}
   \frac{  \sharp \{ y \in (\Lambda-x) \cap B_n(0) \mid P(\varrho,y) = P \} }{\lambda (B_n(0))} \tag{UPF}
   \end{equation}
   exists and the convergence is uniform in $x \in \mathbb{R}^N$.  Here,
   $\lambda$ denotes the $N$-dimensional Lebesgue measure.

   \Paragraph{ Cut and project schemes and model sets.} In this
   section we discuss how Meyer sets arise from CPS. The material of
   this section is well-known \cite{Mey,Moo97,Moo00,schlottmann:GeneralizedModelSetsAndDynamicalSystems}.
   For the convenience of the reader we include some details and
   provide precise references.

\smallskip

We adopt the notation  introduced  in the introduction above and
consider a CPS $(G,H,\cL)$ with $G = \R^N$ and $H$ a locally compact
abelian group. We will assume that $H$ is $\sigma$-compact and
metrizable.\footnote{Metrizability of $H$ is only  a matter of
convenience. It allows us to work with sequences instead of nets. It
is clearly met in our specific examples, where we have $G = H =
\R$.} As both $\R^N$ and $H$ are  $\sigma$-compact, the lattice
$\cL$ must be countable (as it has a compact quotient).  The Haar
measure of a measurable subset $W\ssq H$ will be denoted by
$|W|$.\label{page:cpsintroduced}

\medskip

Here are the basic properties of sets arising from the CPS.

  \begin{lemma}[{\cite[Proof of Proposition 2.6 (i)]{Moo97}}]
   \label{lem:CPSMeyer}
   Let $(\R^N,H,\cL)$ be a CPS and $W\ssq H$.
Then, the following holds:
\begin{itemize}
\item $\Oplam (W)$ is uniformly discrete if  $\closure(W)$ is
compact.
\item $\Oplam (W)$ is relatively dense if $\interior
(W)\neq \emptyset$.
\end{itemize}
In particular,  $\Oplam(W)$ is a Delone set with  (FLC) (and even
Meyer)  if $W$ is relatively compact with non-emtpy interior.
  \end{lemma}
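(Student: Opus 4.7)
The plan is to prove the two bulleted implications independently and then combine them for the final ``in particular'' clause. Three structural features of the CPS are used throughout: $\cL$ is discrete and cocompact in $\R^N \times H$, $\pi_G$ is injective on $\cL$, and $\pi_H(\cL)$ is dense in $H$.

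For uniform discreteness, assume $\closure W$ is compact and set $K := \closure W - \closure W$, which is also compact. The key inclusion
\[
\Oplam(W) - \Oplam(W) \ \subseteq \ \pi_G\bigl(\cL \cap (\R^N \times K)\bigr)
\]
holds because $\pi_H(\ell_1 - \ell_2) \in \closure W - \closure W = K$ whenever $\pi_H(\ell_i) \in W$. Discreteness of $\cL$ forces the set $F := \cL \cap (B_1(0) \times K)$ to be finite; setting
\[
r \ := \ \min\bigl\{\|\pi_G(\ell)\| : \ell \in F \setminus \{0\}\bigr\}
\]
(with the convention $r = 1$ if $F = \{0\}$) yields a strictly positive constant, thanks to injectivity of $\pi_G|_\cL$. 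Two distinct points of $\Oplam(W)$ at distance less than $r$ would produce a nonzero $\ell \in F$ with $\|\pi_G(\ell)\| < r$, contradicting the definition of $r$.

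For relative density, the more delicate of the two parts, assume $\inte W \neq \emptyset$ and set $U := -\inte W$, a nonempty open subset of $H$. I would pass to the compact quotient $\T := (\R^N \times H)/\cL$ with its open projection $\pi$, and reduce the claim to producing some $R > 0$ with $\pi(B_R(0) \times U) = \T$. Indeed, given such $R$, every class $\pi(x,0)$ admits a representative $(b,u) \in B_R(0) \times U$, yielding $\ell \in \cL$ with $\pi_G(\ell) = x - b \in B_R(x)$ and $\pi_H(\ell) = -u \in \inte W \subseteq W$, so $\pi_G(\ell) \in \Oplam(W) \cap B_R(x)$. To produce $R$, I observe that the open sets $\pi(B_r(0) \times U)$ (taking open balls) form an increasing family whose union is all of $\T$: for any $(x,h) \in \R^N \times H$, density of $\pi_H(\cL)$ applied to the nonempty open set $h - U$ yields $\ell \in \cL$ with $h - \pi_H(\ell) \in U$, placing $(x,h) - \ell$ in $B_r(0) \times U$ as soon as $r > \|x - \pi_G(\ell)\|$. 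Compactness of $\T$ then singles out a finite $R$ with $\pi(B_R(0) \times U) = \T$.

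The main obstacle is resisting the temptation to handle the relative density bullet via a compact fundamental domain of $\cL$ directly: writing $(x,0) = f + \ell$ with $f$ in such a domain gives $\R^N$-control on $\pi_G(\ell)$ but leaves $\pi_H(\ell)$ in an arbitrary compact subset of $H$, with no reason to meet $W$. The quotient/open-cover viewpoint above circumvents this by allowing the $H$-coordinate of the representative to vary freely within $U$, exploiting cocompactness of $\cL$ and density of $\pi_H(\cL)$ simultaneously. For the ``in particular'' clause, the Delone property is immediate from the two bullets. The Meyer property is obtained by reapplying the uniform discreteness argument to the compact set $K = \closure W - \closure W$: the inclusion $\Oplam(W) - \Oplam(W) \subseteq \Oplam(K)$ from the first step transfers uniform discreteness from $\Oplam(K)$ to $\Oplam(W) - \Oplam(W)$. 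Finite local complexity then follows from the equivalence in Lemma~\ref{lem:DeloneFLCEquivalences}(iii).
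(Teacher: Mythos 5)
Your proof is correct, and it is more self-contained than the paper's. The paper does not actually prove the two bulleted statements: it cites the proof of Proposition 2.6(i) in \cite{Moo97} for both uniform discreteness and relative denseness, and only writes out the deduction of the Meyer property, namely the inclusion $\Oplam(W)-\Oplam(W)\ssq\Oplam(\closure(W)-\closure(W))$ followed by an application of the first bullet to the compact set $\closure(W)-\closure(W)$ --- which is exactly your final paragraph, so there you coincide. What you add are complete arguments for the two bullets. Your uniform discreteness argument (finiteness of $\cL\cap(B_1(0)\times K)$ by discreteness of the lattice, plus injectivity of $\pi_G|_\cL$ to guarantee the minimum $r$ is positive) and your relative density argument (covering the compact quotient $\T$ by the increasing family of open sets $\pi(B_r(0)\times U)$, using denseness of $L^*$ to see that the union is all of $\T$) are both sound and are essentially the standard arguments from Moody's work that the paper is pointing to. Your remark on why a compact fundamental domain alone does not yield relative density --- one controls the $\R^N$-coordinate of the lattice representative but not its $H$-coordinate --- correctly identifies why cocompactness must be combined with denseness of $\pi_H(\cL)$, which is precisely the role of the open-cover argument. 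Two cosmetic points: you should take $B_1(0)$ closed (or intersect with its closure) so that $\cL\cap(B_1(0)\times K)$ is manifestly the intersection of a closed discrete set with a compact set; and it is worth saying explicitly that a discrete subgroup of a Hausdorff locally compact group is closed, which is what makes that intersection finite. Neither affects correctness.
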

  \begin{proof} The statement of Proposition 2.6 (i) in \cite{Moo97}
  deals simultaneously  with both uniform discreteness and relatively denseness.
  However, the proof clearly gives both parts of the present lemma separately. Here, we only discuss how the last statement follows from the
  first two statements:    We have
   $$ \Oplam(W)-\Oplam(W) = \{x-y \mid  x^*, y^* \in W \}\ssq \{ z \in
L \mid z^* \in W-W \} = \Oplam(\closure(W) - \closure(W)).$$
   Since $\closure(W)- \closure(W)$ is  compact, $\Oplam( \closure(W)- \closure(W))$
    is uniformly discrete. Thus,
   also $\Oplam(W)-\Oplam(W)$ is uniformly discrete.
  \end{proof}

If $L^* \cap \partial W = \emptyset$, the model set is called
\emph{generic}. Here is the fundamental result on proper windows and generic
model sets. The result is a consequence of the Baire category theorem.

\begin{lemma}[{\cite[Proof of Corollary 4.4]{schlottmann:GeneralizedModelSetsAndDynamicalSystems}}]
\label{lem:properandgeneric} Let $(\R^N,H,\cL)$ be a CPS and $W\ssq
H$. If $\partial W$ has empty interior, then there exists an $h\in
H$ such that $W + h$ is generic. In particular, whenever $W$ is
proper there exists $h\in H$ such that $W + h$ is generic.
\end{lemma}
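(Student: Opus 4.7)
The plan is to reduce genericity to a Baire category argument. Recall that $W+h$ is generic iff $L^*\cap\partial(W+h)=\emptyset$, and since boundary commutes with translation this is equivalent to
\[
  L^*\cap(\partial W+h) \ = \ \emptyset \quad \Longleftrightarrow \quad h\notin L^*-\partial W \ = \ \bigcup_{\ell^*\in L^*}(\ell^*-\partial W).
\]
So the task reduces to showing that this union of translates does not exhaust $H$, and in fact its complement is residual.

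The key structural facts to invoke are: (a) $L^*=\pi_H(\cL)$ is countable, because $\cL$ is countable (this is already noted in the CPS preliminaries, using $\sigma$-compactness of $G\times H$ modulo the compact quotient); (b) $\partial W$ is closed in $H$; and (c) by hypothesis $\partial W$ has empty interior. Translation is a homeomorphism of $H$, so each set $\ell^*-\partial W$ is closed with empty interior, hence nowhere dense. Consequently $L^*-\partial W$ is a countable union of closed nowhere dense sets, i.e.\ meager.

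Now the space $H$ is locally compact Hausdorff (and also completely metrizable, since a $\sigma$-compact, locally compact, metrizable space is Polish), so the Baire category theorem applies. Therefore the complement
\[
  \Theta \ = \ H\smin(L^*-\partial W)
\]
is residual in $H$, in particular non-empty. Any $h\in\Theta$ does the job: one has $L^*\cap(\partial W+h)=\emptyset$, so $W+h$ is generic.

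For the ``in particular'' clause, it suffices to observe that properness of $W$ forces $\partial W$ to have empty interior. Indeed, $W=\closure(\interior W)$ is closed, whence $\partial W=W\smin\interior W=\closure(\interior W)\smin\interior W$. If a non-empty open $U\ssq\partial W$ existed, then $U\ssq\closure(\interior W)$ would force $U\cap\interior W\neq\emptyset$, contradicting $\partial W\cap\interior W=\emptyset$. Hence the first part of the lemma applies. The only mildly delicate point is ensuring we may indeed apply Baire category in the (possibly non-locally-Euclidean) group $H$, but local compactness of $H$ (built into the CPS) already suffices.
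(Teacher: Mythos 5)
Your proof is correct and follows essentially the same route as the paper: both reduce genericity of $W+h$ to $h\notin L^*-\partial W$ and then apply the Baire category theorem to the countable union $\bigcup_{\ell\in L}(\ell^*-\partial W)$ of nowhere dense sets, with the ``in particular'' clause handled by noting that properness forces $\partial W=W\smin\interior(W)$ to have empty interior. Your write-up merely spells out a few steps (closedness of the translates, why $H$ is a Baire space) that the paper leaves implicit.
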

\begin{proof}  As $\cL$ is countable and $\partial W$ has empty interior,
$$L^* - \partial W = \bigcup_{l\in L} (l^* - \partial W)$$
can not agree with $H$ by Baire's category theorem.  Now, any $h\in H
\setminus ( L^* -\partial W)$ will have the desired property.

The last statement follows as for any proper window $W$, clearly,
its boundary $\partial W = W \setminus \interior (W)$ has empty
interior.
\end{proof}

A model set $\Oplam(W)$ is called \emph{regular} if $|\partial
W|=0$.

   \begin{lemma}[{\cite[Theorem 4.5]{schlottmann:GeneralizedModelSetsAndDynamicalSystems}, \cite[Theorem8]{Moo00}}]
       \label{lem:genericrepetitivity}
     \begin{itemize}
     \item[(a)] Let $\Oplam(W)$ be a regular model set associated to the  CPS
$(\R^N,H,\cL)$. Then it has (UPF).
     \item[(b)] Let $\Oplam(W)$ be a generic model set associated to the CPS $(\R^N,H,\cL)$. Then it is
repetitive.
     \end{itemize}
   \end{lemma}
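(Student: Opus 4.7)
The plan is to handle both parts simultaneously through a single construction in the internal space $H$. Fix a patch $(P,\varrho) \in \mathcal{P}(\Oplam(W))$ and a representative $x_0 \in \Oplam(W)$ with $P(\varrho, x_0) = P$, and introduce
\[
V_P \ := \ \{h \in H \mid h + y^* \in W \text{ for all } y \in P\} \cap \{h \in H \mid h + y^* \notin W \text{ for all } y \in (L \cap B_\varrho(0)) \smin P\}.
\]
A direct unfolding of the definitions establishes that for $x \in L$ one has $x^* \in V_P$ if and only if $x \in \Oplam(W)$ and $P(\varrho, x) = P$. In particular, $x_0^* \in V_P$, the set $V_P \ssq W$ is relatively compact, and the set of $x \in \Oplam(W)$ carrying patch $P$ equals the cut-and-project set $\Oplam(V_P)$.

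For part (b), the task then reduces to showing that $V_P$ has non-empty interior, since Lemma \ref{lem:CPSMeyer} would then yield relative density of $\Oplam(V_P)$. This is where genericity enters: for every $y \in L$, the point $x_0^* + y^* = (x_0+y)^*$ lies in $L^*$ and hence avoids $\partial W$ by assumption, so it belongs to $\interior(W)$ when $y \in P$ and to $H \smin \closure(W)$ when $y \in (L\cap B_\varrho(0))\smin P$. Only finitely many $y$ in the latter set impose non-trivial constraints on $h$ in a bounded neighborhood of $x_0^*$, because compactness of $\closure(W)$ together with discreteness of $\cL$ confines the relevant $y$'s to those with $(y, y^*)$ in a compact subset of $\cL$. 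A continuity argument over this finite collection then produces an open neighborhood of $x_0^*$ contained in $V_P$.

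For part (a), the same set $V_P$ now satisfies $|\partial V_P| = 0$, since $\partial V_P$ is contained in the countable union $\bigcup_{y \in L \cap B_\varrho(0)}(\partial W - y^*)$ and $|\partial W| = 0$ by regularity. I would then pass to the compact quotient $\T = (\R^N \times H)/\cL$ carrying the Kronecker flow $\omega_s : [r,h]_\cL \mapsto [r+s,h]_\cL$, which is uniquely ergodic with respect to normalized Haar measure precisely because $L^*$ is dense in $H$. The counting identity
\[
\sharp\{y \in \Oplam(W) \cap (x + B_n(0)) \mid P(\varrho, y) = P\} \ = \ \sharp\{y \in L \cap (x + B_n(0)) \mid y^* \in V_P\}
\]
reinterprets patch frequencies as Birkhoff-type averages of $\ind_{V_P}$ along the orbit of $[x,0]_\cL$, and unique ergodicity would force uniform convergence in $x$ of this quantity, normalized by $\lambda(B_n(0))$, to $|V_P|$ times the density of $\cL$.

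The main technical obstacle is the final step in (a): unique ergodicity directly gives uniform convergence of Birkhoff averages only for \emph{continuous} observables, whereas $\ind_{V_P}$ is discontinuous. The standard remedy is to sandwich $\ind_{V_P}$ between continuous functions on $\T$ that agree with it outside an arbitrarily small Haar-measure neighborhood of $\partial V_P$, which is possible because $|\partial V_P| = 0$ and Haar measure is outer regular. A minor parallel subtlety in (b) is verifying that the a priori infinite conjunction defining $V_P$ collapses to effectively finitely many constraints near $x_0^*$, for which the compactness of $\closure(W)$ and discreteness of $\cL$ are essential.
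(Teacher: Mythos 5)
The paper does not actually prove this lemma: it is quoted from \cite[Theorem 4.5]{schlottmann:GeneralizedModelSetsAndDynamicalSystems} and \cite[Theorem 8]{Moo00}, so there is no internal proof to compare against. Your reconstruction via the derived window $V_P$ is precisely the standard route of those references: the occurrences of the patch $P$ form the projection set $\Oplam(V_P)$; genericity places the relevant points $x_0^*+y^*=(x_0+y)^*\in L^*$ in $\interior(W)$ (for $y\in P$) or in $H\smin\closure(W)$ (for $y\in (L\cap B_\varrho(0))\smin P$), whence $\interior(V_P)\neq\emptyset$ and relative denseness follows from Lemma~\ref{lem:CPSMeyer}; and regularity yields $|\partial V_P|=0$, after which the uniform-distribution argument on $\T$ (exactly the content and proof method of Theorem~\ref{t:uniform-distribution}) gives (UPF). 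The identification of $\{x\in\Oplam(W)\mid P(\varrho,x)=P\}$ with $\Oplam(V_P)$ is correct, using $0\in P$ and $P\ssq L\cap B_\varrho(0)$.

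One step in part (a) is not justified as written. Since $L$ need not be discrete, $L\cap B_\varrho(0)$ is in general infinite, so $V_P$ is an infinite intersection, and the inclusion $\partial V_P\ssq\bigcup_{y\in L\cap B_\varrho(0)}(\partial W-y^*)$ can fail: the boundary of an infinite intersection is only contained in the \emph{closure} of the union of the individual boundaries (take $A_i=\R\smin\{1/i\}$; then $0\in\partial\bigl(\bigcap_i A_i\bigr)$ but $0\notin\bigcup_i\partial A_i$), and the closure of a countable union of null sets need not be null. The repair is exactly the localization you already carry out in part (b): since $V_P\ssq W$, fix a relatively compact open $N\supseteq\closure(W)$; for $h\in N$ the constraint $h+y^*\notin W$ is vacuous unless $(y,y^*)\in\cL\cap\bigl(B_\varrho(0)\times(W-N)\bigr)$, which is a finite set $F_0$, so $V_P=\bigl(\bigcap_{y\in P}(W-y^*)\cap\bigcap_{y\in F_0}(W^c-y^*)\bigr)\cap N$ is effectively a finite intersection. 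Hence $\partial V_P\ssq\bigcup_{y\in P\cup F_0}(\partial W-y^*)$ (the contribution of $\partial N$ is void because $\closure(V_P)\ssq\closure(W)\ssq N$), giving $|\partial V_P|=0$. With that fixed, the sandwiching of $\ind_{V_P}$ by continuous functions on $\T$ and unique ergodicity of the Kronecker flow complete part (a) as you describe.
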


  \Paragraph{ Delone Dynamical Systems.}
In this section we show how a uniformly discrete set gives rise to a
dynamical system. The dynamical systems arising in this way from
Meyer sets are the main object of study in our paper.

\smallskip

Let $\mathcal{F}$ denote the space of all closed subsets of $\R^N$
including the empty set.
Let furthermore $\mathcal{U}_{r}(\mathbb{R}^N)$ be the space of all uniformly
discrete sets in $\R^N$ which satisfy (\ref{e.uniform_discreteness}) with a
fixed constant $r>0$, and $\mathcal{D}_{r,R}$ be the set of all Delone sets with
satisfying (\ref{e.uniform_discreteness}) and (\ref{e.relative_denseness}) with
fixed constants $r,R>0$.  We can introduce a metric $d$ on $\mathcal{F}$ as
follows: Let
\[
    j \! : \, \SSS^N \xrightarrow{\quad}
          \R^{N}\cup\{\infty\}
\]
be the stereographic projection. Here, $\SSS^N$ denotes the
$N$-dimensional sphere in $\R^{N+1}$ and the point $\infty$ denotes
the additional point in the one-point compactification of $\R^{N}$,
which is the image of the `north pole' under $j$. Let
$d^{}_{\mathrm{H}}$ be the Hausdorff metric on the set of compact
subsets of $\SSS^N$. Then, for any closed  $\Lambda \ssq \R^{N}$,
the set $j^{-1} (\Lambda \cup\{\infty\}) $ is a closed and hence
compact subset of $\SSS^N$. Thus, via
\[
    d (\Lambda_1,\Lambda_2) \, := \,
      d^{}_{\mathrm{H}} \bigl( j^{-1} (\Lambda_1 \cup\{\infty\}),
          j^{-1} (\Lambda_2 \cup\{\infty\}) \bigr),
\]
we obtain a topology on the set of all closed subsets of $\R^N$.

  \begin{lemma}[\cite{LS}]
   \label{lem:DeloneMetric}
   The map $d: \mathcal{F} \times \mathcal{F}\to\R^+$ defines a metric on $\mathcal{F}$,
    which makes $(\mathcal{F},d)$
  into  a compact metric space. Further, the sets $\mathcal{U}_{r}$ and $\cD_{r,R}$ are compact in this
   metric for all $r,R>0$.
  \end{lemma}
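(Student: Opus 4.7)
The plan is to combine three ingredients: (i) the Hausdorff metric $d_H$ turns the space of non-empty closed subsets of the compact metric space $\SSS^N$ into a compact metric space; (ii) the map $\iota: \cF \to \{K\ssq\SSS^N : K \text{ closed}, \infty\in K\}$, $\Lambda\mapsto j^{-1}(\Lambda\cup\{\infty\})$, is a bijection; (iii) stereographic projection restricts to a homeomorphism between $\SSS^N$ minus its north pole and $\R^N$, so Hausdorff convergence on $\SSS^N$ translates into a local notion of convergence on $\R^N$. With these in hand, the verification that $d$ is a metric is immediate: symmetry and the triangle inequality are inherited from $d_H$, and the separation axiom follows from the injectivity of $\iota$.

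For compactness of $(\cF,d)$ I would invoke the classical fact that the hyperspace of non-empty closed subsets of $\SSS^N$ is compact under $d_H$. The image $\iota(\cF)$ consists of precisely those closed subsets of $\SSS^N$ that contain $\infty$; this condition is preserved under $d_H$-limits, so $\iota(\cF)$ is closed in a compact space and hence itself compact. Transporting back via the isometry $\iota: (\cF,d)\to(\iota(\cF),d_H)$ gives compactness of $(\cF,d)$.

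It remains to show that $\cU_r$ and $\cD_{r,R}$ are closed in $(\cF,d)$. Suppose $\Lambda_n\to\Lambda$ in $d$. The translation principle that drives both arguments is that for every bounded set $B\ssq\R^N$: (a) every $x\in\Lambda\cap B$ is a limit of some sequence $x_n\in\Lambda_n$, and (b) every convergent subsequence of points $y_{n_k}\in\Lambda_{n_k}\cap B$ has its limit in $\Lambda$. For $\cU_r$, if the limit $\Lambda$ contained two points $x,y$ with $\|x-y\|<r$, approximation via (a) would produce $x_n,y_n\in\Lambda_n$ with $\|x_n-y_n\|<r$ for all large $n$, contradicting $\Lambda_n\in\cU_r$. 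For $\cD_{r,R}$, given any $x\in\R^N$, picking $y_n\in B_R(x)\cap\Lambda_n$ and extracting a subsequential Euclidean limit $y\in B_R(x)$ by compactness, part (b) yields $y\in\Lambda\cap B_R(x)$, so $\Lambda$ is relatively dense with constant $R$.

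The main obstacle I anticipate is making this translation principle precise, since $j$ distorts distances more and more as one approaches the north pole, so one cannot naively identify $d_H$-convergence on $\SSS^N$ with Hausdorff convergence of the corresponding subsets of $\R^N$. The remedy is that both tests above are strictly local, performed inside bounded sets $B\ssq\R^N$ whose $j^{-1}$-preimages stay in a compact neighbourhood of $\SSS^N$ bounded away from $\infty$; on such a neighbourhood $j^{-1}$ is bi-Lipschitz, so $d_H$-convergence on $\SSS^N$ descends to the Euclidean Hausdorff convergence on $B$ that (a) and (b) encode, and the arguments close.
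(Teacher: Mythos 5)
Your proof is correct and follows the only natural route given the definition of $d$: identify $(\cF,d)$ isometrically with the subset of the compact hyperspace of $\SSS^N$ consisting of the closed sets containing the north pole, note that this subset is closed under Hausdorff limits (hence compact), and verify that membership in $\cU_r$ and $\cD_{r,R}$ is a purely local condition on bounded sets, where $j$ is bi-Lipschitz, so that both sets are $d$-closed. The paper itself only cites \cite{LS} for the compactness of $(\cF,d)$ and declares the closedness of $\cU_r$ and $\cD_{r,R}$ to be clear, so your write-up supplies precisely the details that are omitted, including the correct treatment of the distortion of $j$ near the north pole and of the empty set (whose image is the singleton consisting of the north pole).
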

\begin{proof} Compactness of $(\mathcal{F},d)$ is discussed in
\cite{LS}. As $\mathcal{U}_{r}$ and $\cD_{r,R}$ are clearly closed,
they are also compact.
\end{proof}

\begin{remark} In the investigation of  Delone sets (rather than uniformly discrete sets)
 another metric  may be even more common,  see e.g.
\cite{leeEtAl:PurePointDynamicalAndDiffractionSpectra}.  However,
both metrics induce the same topology, \cite{BL,LS}.
\end{remark}

   Let $\Lambda \ssq \mathbb{R}^N$ be a uniformly discrete  set. Then
    \[
    \Omega(\Lambda) \ = \ \closure \left( \left\{ \Lambda - s \mid s \in
        \mathbb{R}^N \right\} \right)
    \]
    is called the \emph{dynamical hull} of $\Lambda$. Here, the closure is taken
    with respect to the topology induced by the  metric discussed in  Lemma \ref{lem:DeloneMetric}.
Note that this closure may contain the empty set even if $\Lambda$
was not the empty set.
   Given the canonical flow $\varphi_s(\Gamma) := \Gamma - s$ on
    $\Omega(\Lambda)$, we call the pair $(\Omega(\Lambda), \varphi)$
    \emph{point set  dynamical system} and also write $(\Omega(\Lambda),
    \mathbb{R}^N)$.  Dynamical systems of this form are
    sometimes called \emph{mathematical quasicrystals}.

\begin{lemma}[{\cite[Corollary 3.3 and Proposition
3.1]{schlottmann:GeneralizedModelSetsAndDynamicalSystems}}]
] 
  Let $\Lambda$ be a Delone set with FLC. Then \label{l.basic_dynamical_properties}
  \begin{itemize}
  \item[(a)] $(\Omega(\Lambda), \varphi)$ is uniquely ergodic if and only if
    $\Lambda$ has (UPF);
  \item[(b)] $(\Omega(\Lambda), \varphi)$ is minimal if and only if $\Lambda$ is
    repetitive.
  \end{itemize}
    \end{lemma}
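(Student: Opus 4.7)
Both statements rest on the following preliminary observation, which I would establish first: under FLC, every patch of every $\Gamma \in \Omega(\Lambda)$ is a patch of $\Lambda$; and if in addition $\Lambda$ is repetitive, then conversely every patch of $\Lambda$ appears in every $\Gamma \in \Omega(\Lambda)$. The first half is a diagonal argument: if $\Gamma = \lim (\Lambda - s_n)$ and $x \in \Gamma$, one approximates $x$ by points $x_n + s_n \in \Lambda$ and sees that $(\Gamma - x)\cap B_\varrho(0)$ is a limit of $\varrho$-patches of $\Lambda$; FLC forces the sequence to be eventually constant. The second half uses repetitivity: for a patch $(P, \varrho)$ with occurrence set $S_P \ssq \Lambda$ relatively dense, one picks $y_n \in S_P$ with $y_n - s_n$ in a fixed ball, passes to a subsequence with $y_n - s_n \to z$, and verifies $z \in \Gamma$ with $(\Gamma - z)\cap B_\varrho(0) = P$.

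For (b), the forward direction goes as follows: assuming minimality, fix a patch $(P, \varrho)$ with occurrence $y_0 \in \Lambda$ and note that the set $U_{P, \eps} := \{\Gamma : \exists\, y \in \Gamma \cap B_\eps(0) \text{ with } (\Gamma - y)\cap B_\varrho(0) = P\}$ is open and nonempty, since it contains $\Lambda - y_0$. By the standard equivalence between minimality and syndetic return times for compact flows, the set $\{s \in \R^N : \Lambda - s \in U_{P,\eps}\}$ is relatively dense, and each such $s$ lies within $\eps$ of an element of $S_P$, yielding repetitivity of $\Lambda$. For the converse, given $\Gamma, \Gamma' \in \Omega(\Lambda)$, I fix $x' \in \Gamma'$, consider the patch $P_R := (\Gamma' - x')\cap B_R(0)$, which by the preliminary lemma lies in the patch language of $\Lambda$ and hence appears in $\Gamma$ at some $x_R$; setting $t_R = x_R - x'$ makes $\Gamma - t_R$ agree with $\Gamma'$ on $B_R(x')$, so $\Gamma - t_R \to \Gamma'$, proving minimality.

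For (a), I would use the standard characterization: $(\Omega(\Lambda),\varphi)$ is uniquely ergodic if and only if for each $f \in C(\Omega(\Lambda))$ the spatial averages $\frac{1}{\lambda(B_n)}\int_{B_n} f(\Gamma - s)\, ds$ converge uniformly in $\Gamma$ to a constant. For every patch $(P, \varrho)$ and small $\eps > 0$, I would construct a continuous bump function $f_{P, \varrho, \eps}$ supported on a small neighborhood of $\{\Gamma : 0 \in \Gamma,\ \Gamma \cap B_\varrho(0) = P\}$ such that the spatial average of $f_{P,\varrho,\eps}$ along the orbit of $\Lambda$ matches the patch-counting ratio appearing in (UPF) up to an FLC-controlled error of order $\eps$. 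Thus (UPF) gives uniform convergence of these spatial averages, and a Stone--Weierstrass argument, showing that the functions $f_{P,\varrho,\eps}$ generate a dense subalgebra of $C(\Omega(\Lambda))$, extends uniform convergence to all continuous functions, yielding unique ergodicity. The converse is symmetric: unique ergodicity gives uniform convergence for each $f_{P,\varrho,\eps}$, which upon letting $\eps \to 0$ yields (UPF).

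The main technical obstacle will be the construction of the continuous patch-counting functions $f_{P,\varrho,\eps}$ together with the density step in $C(\Omega(\Lambda))$. The Hausdorff--stereographic topology on $\mathcal{F}$ is somewhat indirect, and showing that patch-based cylinder sets form a neighborhood base requires translating between the metric $d$ and the natural notion of ``agreement on a large ball''; FLC is exactly what makes this translation uniform and lets one conclude both that patch-counting functions separate points and that the $\eps$-error in the spatial averages is controlled independently of $\Gamma$.
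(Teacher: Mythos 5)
The paper offers no proof of this lemma at all: it is imported verbatim from Schlottmann (Corollary 3.3 and Proposition 3.1 of the cited reference), so there is nothing internal to compare against. Your sketch is, in substance, the standard argument from that literature (Schlottmann, Lee--Moody--Solomyak), and I find no conceptual error in it. The preliminary observation, part (b) in both directions via Birkhoff's syndetic-return-time characterization of minimality, and part (a) via uniform convergence of spatial averages are all the right moves. One remark in your favour that you could make explicit: FLC, in the form ``$\Lambda-\Lambda$ is closed and discrete'' (Lemma \ref{lem:DeloneFLCEquivalences}(iii)), is what turns several of your approximate statements into exact ones --- a convergent sequence of patches of $\Lambda$ is \emph{eventually constant} because $\Gamma-\Gamma\ssq\Lambda-\Lambda$ for every $\Gamma$ in the hull, and this also kills the boundary effects at radius exactly $\varrho$ and makes your cylinder sets $U_{P,\eps}$ genuinely open (use the open $\eps$-ball there).

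Two points in part (a) deserve more care than the sketch gives them. First, the set of $f\in C(\Omega(\Lambda))$ whose spatial averages converge uniformly to a constant is a \emph{closed linear subspace}, not obviously an algebra; so Stone--Weierstrass applied to the algebra generated by the $f_{P,\varrho,\eps}$ does not immediately suffice. You need either that the linear span of the patch-counting functions is itself dense, or the observation that a product $f_{P,g}\cdot f_{P',g'}$ is again a finite linear combination of patch-counting functions for composite patches (finitely many relative positions, by FLC), so that (UPF) controls the whole algebra. Second, (UPF) gives uniformity of the averages over the orbit $\{\Lambda-x : x\in\R^N\}$ only; to get uniformity over all of $\Omega(\Lambda)$, use that each averaging operator $A_t f(\Gamma)=\lambda(C_t)^{-1}\int_{C_t}f(\Gamma-s)\,ds$ is continuous, so a bound $|A_t f - c|\leq\eps_t$ valid on the dense orbit persists on its closure. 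Both are routine, but the write-up should contain them.
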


\begin{remark} Let us note that the equivalence between minimality and a (suitably
defined) notion of repetitivity is true in much greater generality
as has been known since  \cite{Auslander}.
\end{remark}

Note that (FLC) is always fulfilled for model sets (see Lemma
\ref{lem:CPSMeyer} above).

\smallskip

The statement of the following proposition is  known and discussed
within proofs in
\cite{schlottmann:GeneralizedModelSetsAndDynamicalSystems,
baakeEtAl:CharacterizationOfModelSets}.

\begin{prop}\label{prop:eineroderallepunkte} Let $(\R^N,H,\cL)$ be a CPS and, as usual, $L = \pi_G (\cL)$.
Let  $\Lambda$ a Delone set
in $\R^N$ with $\Lambda\ssq L$. Then, for $\Gamma\in\Omega
(\Lambda)$ the following assertions are equivalent:

\begin{itemize}
\item[(i)] $\Gamma\ssq L$.
\item[(ii)] $\Gamma$ contains one point of $L$.
\end{itemize}

In this case, there exists a sequence $(t_n)$ in $L$ with $\Lambda +
t_n \to \Gamma$.
\end{prop}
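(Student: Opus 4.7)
My plan is to tackle the equivalence in the natural order, dispatching (i) $\Rightarrow$ (ii) trivially and then using the single lattice point $\ell_0\in\Gamma$ in (ii) to pin down an approximating sequence $\Lambda + t_n\to\Gamma$ with $t_n\in L$, from which the inclusion $\Gamma\ssq L$ follows once one knows that $\Lambda-\Lambda$ is closed and discrete.

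The direction (i) $\Rightarrow$ (ii) is immediate: by Lemma \ref{lem:DeloneMetric} the hull $\Omega(\Lambda)$ sits inside $\cD_{r,R}$, so every $\Gamma\in\Omega(\Lambda)$ is a non-empty Delone set, and by (i) all of its points lie in $L$.

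For (ii) $\Rightarrow$ (i), I would fix $\ell_0\in\Gamma\cap L$ together with a sequence $s_n\in\R^N$ realising $\Lambda - s_n\to\Gamma$. Since $\ell_0\in\Gamma$, the definition of the topology on $\cF$ produces a tracking sequence $\lambda_n\in\Lambda$ with $\lambda_n - s_n\to\ell_0$. Setting $t_n:=\ell_0-\lambda_n$, the subgroup property of $L$ gives $t_n\in L$; moreover, the error term $\epsilon_n:=s_n+t_n=s_n-\lambda_n+\ell_0\to 0$ together with the identity $\Lambda + t_n = (\Lambda - s_n)+\epsilon_n$ and continuity of the $\R^N$-translation action on $(\cF,d)$ will yield $\Lambda+t_n\to\Gamma$. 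This establishes the "moreover" part of the proposition, and by construction $\ell_0 = \lambda_n + t_n \in \Lambda + t_n \ssq L$ for every~$n$.

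The final task is to promote the fact that each $\Lambda+t_n$ lies in $L$ to the statement that $\Gamma\ssq L$. For an arbitrary $\gamma\in\Gamma$, I would track it by $\mu_n\in\Lambda$ with $\mu_n+t_n\to\gamma$, which gives
\[
    \mu_n-\lambda_n \ = \ (\mu_n+t_n)-\ell_0 \ \longrightarrow\ \gamma-\ell_0,
\]
with all terms in $\Lambda-\Lambda$. This is the main obstacle, because $L$ is dense in $\R^N$ and so bare limits of elements of $L$ need not remain in $L$. The fix is to invoke the FLC-type rigidity of $\Lambda$, which is automatic in the Meyer / model-set setting in which the proposition is actually applied: by Lemma \ref{lem:DeloneFLCEquivalences}, $\Lambda-\Lambda$ is then closed and discrete, so the convergent sequence $\mu_n-\lambda_n$ must eventually be constant equal to $\gamma-\ell_0$. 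This forces $\gamma-\ell_0\in\Lambda-\Lambda\ssq L-L=L$ and hence $\gamma\in\ell_0+L=L$, completing the proof.
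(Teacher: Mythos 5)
Your proof is correct and follows essentially the same route as the paper's: adjust the translating sequence so that the fixed lattice point $\ell_0$ lies in every translate $\Lambda+t_n$, deduce $t_n\in L$ from the group structure of $L$, and then use that $\Lambda-\Lambda$ is closed and discrete to conclude that every point of $\Gamma$ lies in $L$. The paper invokes finite local complexity at exactly the same spot without noting that it is not among the stated hypotheses, so your explicit remark that FLC must be supplied (and holds automatically in the Meyer/model-set setting where the proposition is applied) is, if anything, a small gain in precision.
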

\begin{proof} (i)$\Longrightarrow $ (ii): This is clear.

\smallskip

(ii)$\Longrightarrow$ (i): Let $x\in \Gamma \cap L$ be given.  Consider a
sequence $(t_n)$ in $\R^N$ with $\Gamma_n := \Lambda + t_n\to \Gamma$. Without
loss of generality we can then assume $x\in \Gamma_n$ for all $n\in\N$. We then
have $x\in L$ as well as $x \in L + t_n$ and this implies $t_n\in L$ for all
$n\in\N$. This gives, in particular, $\Gamma_n\ssq L$ for all $n\in\N$. Consider
now an arbitrary point $y\in\Gamma$. As $\Gamma_n\to \Gamma$ and
$x\in\Gamma_n,\Gamma$, we infer by finite local complexity that $y\in\Gamma_n$
for all sufficiently large $n$. This then implies $y\in L$.

\smallskip

The last statement has been proven along the proof of
(ii)$\Longrightarrow $ (i).
\end{proof}

\Paragraph{ Flow morphism and torus parametrisation.} The dynamical hull of a
Delone set arising from a CPS can be described via the so-called {\em torus
  parametrization}. This is discussed in this section.

\smallskip

Consider the CPS $(\R^N,H,\cL)$ and define the associated \textit{torus} by $$
\mathbb{T} := (\mathbb{R}^N\times H) / \cL.$$ Then $\mathbb{T}$ inherits a
natural group structure from $\R^N\times H$. We will write $[s,h]_\cL$ for the
element $(s,h) + \cL \in \T$. Further, there is a natural $\mathbb{R}^N$-action
on $\mathbb{T}$ given by
   $$\omega_s( \xi) := \xi + [s,0]_\cL.$$
For $s\in \R^N$ and $l\in L$, we then find
$$\omega_{s - l} (\xi) = \xi + [s-l,0]_\cL= \xi + [s,l^\ast]_\cL.$$ By the
denseness of $L^*$ in $H$, this shows that the action is minimal, i.e. each
orbit is dense. As $\T$ is a group, this gives that the action is uniquely
ergodic, i.e. there is only one invariant probability measure (see
\cite{schlottmann:GeneralizedModelSetsAndDynamicalSystems}).

   A {\em flow morphism} or \textit{factor map} between $\R^N$-actions
   $(X,\phi)$ and $(Y,\psi)$ is a continuous onto map $\eta:X\to Y$ which
   satisfies $\eta(\phi_s(x))=\omega_s(\eta(x))$ for all $x\in X$ and
   $s\in\R^N$. If such a flow morphism exists, the dynamical system $(Y,\psi)$
   is called a \textit{factor} of $(X,\phi)$.

\begin{prop}[\cite{baakeEtAl:CharacterizationOfModelSets}]\label{prop:DDSFlowMorphism}
  Let a  CPS $(\R^N H,\cL)$,  a proper window  $W\ssq H$ and   $\Lambda\ssq \R^N $
  with
    $\Oplam(\interior(W)) \ \ssq \ \Lambda \ \ssq \
    \Oplam(W)$ be given.
   Then there exists a unique  flow morphism $\beta : \Omega(\Lambda) \rightarrow
   \mathbb{T}$ with  $\beta(\Lambda) = 0$. This flow morphism
   satisfies
  \begin{equation}\label{eq:DDSFlowMorphism}
    \beta(\Gamma)  \ = \   [s,h]_{\cL} \quad \Longleftrightarrow \quad
    \Oplam(\interior(W)+h)-s
    \ \ssq \ \Gamma \  \ssq \ \Oplam(W + h)-s
   \end{equation}
   for $\Gamma \in \Omega(\Lambda)$.
\end{prop}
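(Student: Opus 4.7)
The plan is to define $\beta$ implicitly by the characterization \eqref{eq:DDSFlowMorphism}: to each $\Gamma\in\Omega(\Lambda)$ I attach the unique $\xi\in\T$ for which $\Lambda^-_\xi\ssq\Gamma\ssq\Lambda^+_\xi$, where $\Lambda^-_\xi:=\Oplam(\interior(W)+h)-s$ and $\Lambda^+_\xi:=\Oplam(W+h)-s$ for any representative $\xi=[s,h]_\cL$. As a preliminary step I would check that these sandwich sets depend only on $\xi$ and not on $(s,h)$: replacing $(s,h)$ by $(s+\ell,h+\ell^*)$ with $(\ell,\ell^*)\in\cL$ is handled by the translation identity $\Oplam(W+h+\ell^*)=\Oplam(W+h)+\ell$, which is immediate from the definition of $\Oplam$ and the fact that the star map is a homomorphism on $L$.

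For existence, I would pick $s_n\in\R^N$ with $\Lambda-s_n\to\Gamma$ and use compactness of $\T$ to extract a subsequence such that $[s_n,0]_\cL\to\xi$; lifting to $\R^N\times H$ there are representatives $(\tilde s_n,\tilde h_n)$ of $[s_n,0]_\cL$ with $\tilde s_n\to\tilde s$, $\tilde h_n\to\tilde h$ and $\xi=[\tilde s,\tilde h]_\cL$. The trivial sandwich $\Lambda^-_{[s_n,0]}\ssq\Lambda-s_n\ssq\Lambda^+_{[s_n,0]}$ must then pass to the limit. For the upper inclusion, given $\ell\in\Gamma$ I find $\ell_n\in\Lambda-s_n$ with $\ell_n\to\ell$; the lattice points $(\ell_n+\tilde s_n,(\ell_n+\tilde s_n)^*)\in\cL$ have a bounded first coordinate and a second coordinate in the compact set $W+\tilde h_n$, so discreteness of $\cL$ forces them to be eventually constant along a subsequence, and passing to the limit gives $\ell\in\Lambda^+_\xi$. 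For the lower inclusion, openness of $\interior(W)$ lets me push each $\ell\in\Lambda^-_\xi$ back to a point $\ell+\tilde s-\tilde s_n\in\Lambda^-_{[\tilde s_n,\tilde h_n]}=\Lambda^-_{[s_n,0]}\ssq\Lambda-s_n$ for all large $n$, and this sequence converges to $\ell$, whence $\ell\in\Gamma$ by closedness.

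The principal obstacle is the uniqueness of $\xi$. If $\xi_1=[s_1,h_1]_\cL$ and $\xi_2=[s_2,h_2]_\cL$ both sandwich $\Gamma$, then comparing the two inclusions and invoking the density of $L^*$ in $H$ together with the properness $\closure(\interior(W))=W$ yields $W+h_1+(s_2-s_1)^*\ssq W+h_2$, and by symmetry equality. Hence $W$ is invariant under translation by $h_1-h_2+(s_2-s_1)^*\in H$. Since $H=\R$ and $W$ is compact, no non-zero translation preserves $W$, so this translation must vanish, which is precisely the condition $[s_1,h_1]_\cL=[s_2,h_2]_\cL$ in $\T$. The same limiting argument used for existence also shows that the graph $\{(\Gamma,\xi):\Lambda^-_\xi\ssq\Gamma\ssq\Lambda^+_\xi\}$ is closed in $\Omega(\Lambda)\times\T$, and together with well-definedness this yields continuity of $\beta$.

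The remaining assertions are formalities. The condition $\beta(\Lambda)=0$ is read directly from the hypothesis $\Oplam(\interior(W))\ssq\Lambda\ssq\Oplam(W)$, which is exactly the sandwich for $\xi=0\in\T$. The equivariance $\beta(\Gamma-s)=\beta(\Gamma)+[s,0]_\cL$ follows by translating the sandwich by $s$. Surjectivity is obtained because $\{[s,0]_\cL\mid s\in\R^N\}$ is dense in $\T$ and $\Omega(\Lambda)$ is compact, so any $\xi\in\T$ is hit via an appropriate limit of $\Lambda-s_n$. Finally, uniqueness of $\beta$ is forced: any other flow morphism $\beta'$ with $\beta'(\Lambda)=0$ must satisfy $\beta'(\Lambda-s)=[s,0]_\cL=\beta(\Lambda-s)$ on the dense orbit, and by continuity $\beta'=\beta$ on all of $\Omega(\Lambda)$.
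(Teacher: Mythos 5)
The paper does not prove this proposition itself but imports it from \cite{baakeEtAl:CharacterizationOfModelSets}, and your construction is essentially the standard argument of that reference: define $\beta(\Gamma)$ as the unique $\xi=[s,h]_\cL$ whose sandwich sets enclose $\Gamma$, obtain existence by passing the trivial sandwich $\Oplam(\interior(W))-s_n\ssq\Lambda-s_n\ssq\Oplam(W)-s_n$ to the limit along a subsequence with $[s_n,0]_\cL\to\xi$, and deduce continuity from closedness of the graph together with compactness of $\Omega(\Lambda)$ and $\T$. The two limit computations (discreteness of $\cL$ forcing the lattice points $(\ell_n+\tilde s_n,(\ell_n+\tilde s_n)^*)$ to stabilise for the upper inclusion; openness of $\interior(W)$ for the lower one) are correct, as are equivariance, surjectivity, and the uniqueness of $\beta$ via density of the orbit of $\Lambda$.

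Two points in your uniqueness-of-$\xi$ step need attention. First, you apply the star map to $s_2-s_1$ without checking that this difference lies in $L$; it does, but this requires the one-line observation that $\Oplam(\interior(W)+h_1)-s_1\ssq\Oplam(W+h_2)-s_2$ together with $\interior(W)\neq\emptyset$ (guaranteed since $W$ is proper and nonempty) yields an $\ell\in L$ with $\ell+(s_2-s_1)\in L$, whence $s_2-s_1\in L-L=L$. Second, and more substantively, your final step excludes a nonzero period of $W$ by invoking that $H=\R$ and $W$ is compact. The proposition is stated for a general internal group $H$, and there a nonempty proper compact window can satisfy $W+v=W$ for some $v\neq 0$ (take $H$ with a nontrivial compact subgroup); in that case the biconditional \eqref{eq:DDSFlowMorphism} itself fails, since the sandwich for $\Gamma=\Lambda$ then holds simultaneously for $[0,0]_\cL$ and for $[0,v]_\cL\neq[0,0]_\cL$ (the latter by injectivity of $\pi_G$ on $\cL$). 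So your argument is complete precisely in the generality in which the statement holds without an extra aperiodicity hypothesis on $W$ --- in particular for $H=\R^M$, which covers every use the paper makes of the proposition --- but as written it does not establish the statement for arbitrary $H$, and for that case one must either add the hypothesis that $W$ has trivial period group or pass to the appropriate quotient of $\T$.
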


The  map $\beta$  from the previous proposition   is often  called a
   \emph{torus parametrization} (associated to the CPS $(\R^N,H,\cL)$
   and the window $W$) and this is how we will refer to it
   in the remainder of the paper. Note that it satisfies $\beta
   (\Lambda) = 0$.

\begin{remark}[Torus parametrization and continuous eigenfunctions]
\label{rem:continuousef} Existence of a torus parametrization has
   consequences for existence of  continuous eigenfunctions. Indeed,
   in the situation of the preceding proposition we can define  for  any $\gamma$ in
   the dual group of $\T$, i.e. any continuous group homomorphism
   $\gamma : \T\longrightarrow\{ z\in \C: |z|=1\}=:S^1$,  the function
  $f := f_\gamma := \gamma \circ \beta$ on $\Omega(\Lambda)$. This
function satisfies
$$f(\phi_s (\Gamma)) = \gamma ( (\beta (\Gamma)) + [s,0]_\cL) = \gamma
([s,0]_\cL) f(\Gamma) = \gamma^* (s) f(\Gamma) $$ for all $s\in
\R^N$ and $\Gamma\in \Omega(\Lambda)$, where we have defined
$\gamma^* : \R^N\longrightarrow S^1$, by $\gamma^* (s) =\gamma
([s,0])$. Then, $\gamma^* $ is an element of the dual group of
$\R^N$ and, hence, $f$ is a continuous eigenfunction. Moreover, the
general theory of CPS, as discussed  in \cite{Mey,Moo97}, shows that
the set $\{\gamma^* : \gamma \in \mbox{dual group of $\T$}\}$ is
relatively dense in $\R^N$. So, we have a relatively dense set of
eigenvalues with continuous eigenfunctions.
\end{remark}


The structure  of fibres of $\beta$ will be crucial for our further
investigation. The following lemma  underlines the spirit of the
constructions of the next section. Similar arguments can be found
e.g.  in \cite{baakeEtAl:CharacterizationOfModelSets}.

  \begin{lemma}
   \label{lem:subshiftConvergence}
   Let $(\R^N, H,\cL)$ be a CPS and  $W\ssq H$ be a proper window, $\Lambda = \Oplam (W)$  and $\beta$
   the associated torus parametrization.
   For given $[0,h]_{\cL} \in \mathbb{T}$, the following conditions are equivalent:
   \begin{enumerate}[(i)]
    \item $\Gamma \in \beta^{-1}([0,h]_{\cL})$;
    \item There exists a sequence $h_j \in L^*$ such that $\lim_{j \rightarrow \infty} h_j = h$ and
     $$\lim_{j \rightarrow \infty} \Oplam(W+h_j) = \Gamma.$$
   \end{enumerate}
  \end{lemma}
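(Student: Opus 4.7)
The plan is to use the explicit description of the torus parametrization from Proposition~\ref{prop:DDSFlowMorphism} together with one basic computation: for every $\ell\in L$, linearity of the star-map on the lattice $\cL$ yields
\[
  \Oplam(W+\ell^*) \ = \ \{k\in L\mid (k-\ell)^*\in W\} \ = \ \Lambda+\ell,
\]
and consequently, applying the flow morphism property $\beta(\Gamma-s)=\beta(\Gamma)+[s,0]_\cL$ with $s=-\ell$ and $\Gamma=\Lambda$, one finds $\beta(\Lambda+\ell)=[-\ell,0]_\cL=[0,\ell^*]_\cL$. This single identity reduces both implications to continuity arguments.

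For (ii)$\Rightarrow$(i) I would write each $h_j=\ell_j^*$ with $\ell_j\in L$, observe via the identity above that $\Oplam(W+h_j)=\Lambda+\ell_j$ automatically lies in $\Omega(\Lambda)$, and then pass to the limit using continuity of $\beta$:
\[
  \beta(\Gamma) \ = \ \lim_{j\to\infty}\beta(\Lambda+\ell_j) \ = \ \lim_{j\to\infty}[0,h_j]_\cL \ = \ [0,h]_\cL.
\]

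For the harder direction (i)$\Rightarrow$(ii) I would start from a sequence $s_n\in\R^N$ with $\Lambda-s_n\to\Gamma$, so that $[s_n,0]_\cL=\beta(\Lambda-s_n)$ converges to $\beta(\Gamma)=[0,h]_\cL$ in $\T$. The crucial step is to lift this convergence to $\R^N\times H$: since $\cL$ is discrete, the quotient map $\R^N\times H\to\T$ is a covering near $(0,h)$, so there exist $\ell_n=(x_n,x_n^*)\in\cL$ with $(s_n+x_n,\,x_n^*)\to(0,h)$. Setting $h_n:=x_n^*\in L^*$ then gives $h_n\to h$, and
\[
  \Oplam(W+h_n) \ = \ \Lambda+x_n \ = \ (\Lambda-s_n)-\bigl(-(s_n+x_n)\bigr) \ \longrightarrow \ \Gamma
\]
by joint continuity of the $\R^N$-action on $\cF$, combined with $\Lambda-s_n\to\Gamma$ and $s_n+x_n\to 0$.

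The main obstacle I anticipate is precisely the lifting step in the converse: one must promote convergence of cosets in $\T$ to convergence of representatives in $\R^N\times H$ by choosing lattice elements in $\cL$ judiciously. This is what forces the approximating sequence in (ii) to take values in $L^*$ rather than merely in $H$, and it relies on discreteness of $\cL$ together with local compactness and metrizability of $\R^N\times H$. Beyond that step, everything is either the flow morphism property of $\beta$, the identity $\Oplam(W+\ell^*)=\Lambda+\ell$, or standard continuity of the translation action on the hull.
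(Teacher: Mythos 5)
Your proof is correct, but the key step of (i)$\Rightarrow$(ii) is handled by a genuinely different mechanism than in the paper. The paper first deduces $\Gamma\ssq L$ from Proposition~\ref{prop:DDSFlowMorphism} and then invokes Proposition~\ref{prop:eineroderallepunkte} --- which rests on finite local complexity --- to produce lattice translates $t_j\in L$ with $\Lambda+t_j\to\Gamma$ directly; continuity of $\beta$ then gives $[0,t_j^*]_\cL\to[0,h]_\cL$, from which convergence $t_j^*\to h$ is asserted. You instead start from an arbitrary approximating sequence $\Lambda-s_n\to\Gamma$, push it forward to $[s_n,0]_\cL\to[0,h]_\cL$, and lift this convergence through the covering $\R^N\times H\to\T$ (using discreteness of $\cL$) to obtain lattice corrections $x_n$ with $s_n+x_n\to 0$ and $x_n^*\to h$, finishing with joint continuity of the translation action on $\cF$. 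What each route buys: the paper's argument recycles a proposition it needs elsewhere and stays within the combinatorics of Delone sets, whereas yours avoids any appeal to FLC and, notably, makes fully explicit the passage from convergence of cosets in $\T$ to convergence of representatives in $H$ --- precisely the point the paper compresses into ``this easily implies convergence of $h_j:=s_j^*$ to $h$'', and which in general does require choosing the lattice corrections as you do, since the star map is discontinuous and $L$ need not be discrete. The identity $\Oplam(W+\ell^*)=\Lambda+\ell$ and the computation $\beta(\Lambda+\ell)=[0,\ell^*]_\cL$, as well as the direction (ii)$\Rightarrow$(i) by continuity of $\beta$, coincide with the paper's treatment.
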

  \begin{proof} (i)\follows(ii):
    By $\beta (\Gamma) = [0,h]_\cL$ and Proposition
    \ref{prop:DDSFlowMorphism} we  have
    $$\Gamma \ssq \Oplam
    (W-h) \ssq L.$$ Now,  from Proposition
    \ref{prop:eineroderallepunkte} we obtain a sequence $s_j\in L$
    with $\Lambda -s_j\to \Gamma$.  Due to the
    continuity of the flow morphism $\beta$, we then  obtain
\begin{eqnarray*}   [0,h]_{\cL}  =  \beta(\Gamma) =   \lim_{j \rightarrow
    \infty} \beta(\varphi_{s_j}(\Oplam(W)))  =  \lim_{j \rightarrow
    \infty} [0,s_j^*]_{\cL}.
\end{eqnarray*}
This easily implies convergence of $h_j:= s_j^*$ to $h\in H$ for
$j\to \infty$.

(ii)\follows(i): This follows   immediately from the continuity of
$\beta$.
  \end{proof}




From the considerations in
\cite{baakeEtAl:CharacterizationOfModelSets} we obtain the following
lemma. As we will need to build on this argument in later sections
we include a short proof.

\begin{lemma}[\cite{baakeEtAl:CharacterizationOfModelSets}] \label{lem:dichotomy}
    Let $(\R^N,H,\cL)$ be a CPS and $W\ssq H$ a proper
    window and
   $\Lambda \ssq \R^N$ with $
    \Oplam(\interior(W)) \ \ssq \ \Lambda \ \ssq \
    \Oplam(W)$ be given.
    Then the   following dichotomy  holds for the torus parametrization:
\begin{itemize}
\item[(a)]   If $\emptyset  =  (\partial W + h) \cap L^*$  then $[0,h]_{\cL}$ has
exactly one preimage under $\beta$.
\item [(b)] If there exist an $l\in L$ with $l^* \in   \partial W+h$, then
   $\beta^{-1}([0,h]_{\cL})$ contains at least two elements $\Gamma$ and
   $\Gamma'$ which satisfy $l \in \Gamma$ and $l \notin \Gamma'$.
\end{itemize}
In particular,  $[0,h]_{\cL}$ has exactly one preimage under $\beta$
if and only if $W+h$ is generic i.e. $(\partial W + h)  \cap L^* =
\emptyset$ holds. Moreover,   there exists an $h\in H$ such that the
fibre $\beta^{-1} ([0,h]_\cL)$ has only one element.
\end{lemma}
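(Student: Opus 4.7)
The plan is to exploit the sandwich characterisation from Proposition~\ref{prop:DDSFlowMorphism}: for $\Gamma\in\Omega(\Lambda)$, $\beta(\Gamma)=[0,h]_\cL$ iff $\Oplam(\interior(W)+h)\ssq\Gamma\ssq\Oplam(W+h)$. Since $W$ is proper, $\partial W=W\smin\interior(W)$, so the set difference $\Oplam(W+h)\smin\Oplam(\interior(W)+h)$ equals $\{m\in L : m^*\in\partial W+h\}$. Part (a) then follows at once: the hypothesis makes this difference empty, the sandwich collapses to a single set, and surjectivity of $\beta$ onto $\T$ produces the unique preimage $\Oplam(W+h)$.

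For part (b) I will produce the two preimages as limits of translates $\Lambda+s_j$ with $s_j\in L$ chosen so that $s_j^*\to h$; using $(s_j,s_j^*)\in\cL$ one computes $\beta(\Lambda+s_j)=[0,s_j^*]_\cL$. Because $l^*-h\in\partial W$ and $W=\closure(\interior W)$, every neighbourhood of $h$ in $H$ meets both open sets $l^*-\interior(W)$ and $l^*-(H\smin W)$, so the density of $L^*$ lets me pick two sequences: $s_j\in L$ with $s_j^*\to h$ and $s_j^*\in l^*-\interior(W)$, and $s_j'\in L$ with $s_j'^*\to h$ and $s_j'^*\in l^*-(H\smin W)$. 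The first gives $l-s_j\in\Oplam(\interior(W))\ssq\Lambda$, hence $l\in\Lambda+s_j$; the second gives $(l-s_j')^*\notin W$, so $l-s_j'\notin\Oplam(W)$ and thus $l\notin\Lambda+s_j'$ (using $\Lambda\ssq\Oplam(W)$). Passing to convergent subsequences inside the compact set of Delone sets $\cD_{r,R}$ produces $\Gamma,\Gamma'\in\Omega(\Lambda)$, and continuity of $\beta$ places both in $\beta^{-1}([0,h]_\cL)$. The inclusion $l\in\Gamma$ is automatic, since $l$ belongs to every $\Lambda+s_j$ and the Hausdorff-type topology on $\mathcal{F}$ is closed under such membership.

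The main obstacle will be to rule out $l\in\Gamma'$: because $L$ need not be uniformly discrete in $\R^N$, distinct points of $\Lambda+s_{j_k}'$ could a priori accumulate at $l$ and so put $l$ into the limit. Assume $l\in\Gamma'$ and pick $y_k=\lambda_k+s_{j_k}'\in\Lambda+s_{j_k}'$ with $y_k\to l$, necessarily $y_k\neq l$ since $l\notin\Lambda+s_{j_k}'$. Set $z_k:=y_k-l\in L$, so $z_k\to 0$ and $z_k\neq 0$. The star image $z_k^*=\lambda_k^*+s_{j_k}'^*-l^*$ stays bounded, because $\lambda_k^*\in W$ (with $W$ compact since proper) and $s_{j_k}'^*\to h$. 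Hence the lattice elements $(z_k,z_k^*)\in\cL$ lie in a compact subset of $\R^N\times H$; by discreteness of $\cL$ this intersection is finite, forcing $(z_k,z_k^*)$ to be eventually constant along a subsequence. Then $z_k=\lim z_k=0$ along that subsequence, contradicting $z_k\neq 0$. Thus $l\notin\Gamma'$, completing (b). The ``in particular'' statement is simply a rephrasing of (a) via the definition of genericity, and the ``moreover'' clause is immediate from Lemma~\ref{lem:properandgeneric}, which provides an $h\in H$ with $W+h$ generic so that (a) then yields a singleton fibre.
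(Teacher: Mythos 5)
Your proof is correct and follows essentially the same route as the paper: part (a) via the sandwich characterisation \eqref{eq:DDSFlowMorphism}, and part (b) by approximating $h$ with two sequences in $L^*$ lying in $l^*-\interior(W)$ and $l^*-(H\smin W)$ respectively and passing to limits of the corresponding translates of $\Lambda$. The only difference is that you spell out the step $l\notin\Gamma'$ (via discreteness of $\cL$ on compact sets), which the paper dismisses with ``in a similar way''; that extra care is warranted and your argument for it is sound.
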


  \begin{proof} Clearly,  $(W +h) \cap L^*  = (\interior(W) + h)
\cap L^*$ if and only if $(\partial W +h) \cap L^* = \emptyset$.

\smallskip

Consider first the case $(\interior(W)+h) \cap L^* = (W+h) \cap
L^*$. Then, $[0,h]_{\cL}$ has exactly one preimage under $\beta$ by
$\eqref{eq:DDSFlowMorphism}$.

Consider now the case $l^* \in \partial W+h$ for some $l^* \in L^*$.
Since $L^*$ is dense in $H$ and $W$ is proper, we can find
    elements $s_n, s'_n\in L$, $n\in \mathbb{N}$,  such that $h_n = s_n^*$  and
$h'_n = (s'_n)^*$ satisfy
   \begin{itemize}
    \item $\lim_{n \rightarrow \infty} h_n = \lim_{n \rightarrow \infty} h_n' = h$,
    \item $l^* \in \interior(W)+h_n$ and $l^* \notin W+h_n'$ for all $n \in \mathbb{N}$.
   \end{itemize}
   By going over to subsequences if necessary, we may assume that $\{
   \varphi_{-s_n}(\Lambda)\}_{n \in \mathbb{N}}$ and $\{
   \varphi_{-s_n'}(\Lambda) \}_{n \in \mathbb{N}}$ converge to some elements
   $\Gamma$ and $\Gamma'$ of the hull $\Omega(\Oplam(W))$, respectively. Since
   $$\varphi_{-s_n}(\Lambda ) =    \Lambda + s_n \supset  \Oplam(\interior(W)) +  s_n
    = \Oplam(\interior(W) +h_n)\ni l,$$
    we obtain $l \in \Gamma$. In a
   similar way, we can show that at the same time $l \notin \Gamma'$. Hence, we
   obtain that $\Gamma \neq \Gamma'$.  As $\beta$ is a flow morphism, we have
   \begin{align*}
     \beta(\Gamma) &= \beta\left(\lim_{n \rightarrow \infty}
       \varphi_{-s_n}(\Lambda )\right) = \lim_{n \rightarrow \infty}
     \beta(\varphi_{-s_n}(\Lambda )) = \lim_{n \rightarrow \infty} \omega_{-s_n}(\beta(\Lambda)) \\
     &= \lim_{n \rightarrow \infty} \omega_{-s_n}(0) = \lim_{n \rightarrow
       \infty} [-s_n,0]_{\cL} = \lim_{n \rightarrow \infty} [0,h_n]_{\cL} =
     [0,h]_{\cL}.
   \end{align*}
   The same holds for $\Gamma'$, and hence $\Gamma, \Gamma' \in
   \beta^{-1}([0,h]_{\cL})$.

\smallskip

The 'In particular' statement is  immediate from the preceding two
statements. The last statement is then clear  by  Lemma
\ref{lem:properandgeneric}.
\end{proof}


From Lemma \ref{lem:dichotomy} and the fact that $L^*$ is countable, we also
immediately obtain that regularity of the window $W$ has strong implications for
the fibre structure. To state this more precisely we need the following piece of
notation: Two measure-preserving $\R^N$-actions $(X,\phi,\mu)$ and
$(Y,\psi,\nu)$ are {\em measure-theoretically isomorphic} if there exist full
measure sets $X_0\ssq X$ and $Y_0\ssq Y$ and a measurable bijection $\eta:X_0\to
Y_0$ such that $\eta\circ \phi_s(x)=\psi_s\circ\eta (x)$ for all $x\in X_0$ and
$s\in\R^N$.

  \begin{corollary}[\cite{baakeEtAl:CharacterizationOfModelSets}] \label{cor:metricisom}
    Consider  the situation of the previous lemma and assume $|\partial W| = 0$. Then
    for $\lambda$-almost all $\xi \in \mathbb{T}$ the preimage
    $\beta^{-1}(\xi)$ is a singleton. \label{l.singleton_fibres} In
    particular, the flow $(\Omega(\Lambda), \varphi)$
  is uniquely ergodic and measure-theoretically isomorphic to $(\mathbb{T},
  \omega)$ .  \end{corollary}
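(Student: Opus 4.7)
The plan is to combine Lemma \ref{lem:dichotomy} with a null-set argument for the Haar measure $\lambda$ on $\T$, and then exploit the fact, already noted in the paper, that the Kronecker flow $(\T,\omega)$ is uniquely ergodic with invariant measure $\lambda$.

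First I would identify the set of fibres that fail to be singletons. By Lemma \ref{lem:dichotomy}, for $\xi=[s,h]_{\cL}\in\T$ the fibre $\beta^{-1}(\xi)$ fails to be a singleton only if $(\partial W+h)\cap L^*\neq\emptyset$, equivalently $h\in L^*-\partial W$. Since $L^*$ is a subgroup of $H$, this condition is independent of the representative: if $h'=h+\ell^*$ for some $\ell\in L$, then $h'\in L^*-\partial W \iff h\in L^*-\partial W$. Hence the bad set $B_\T\ssq\T$ is the image under the quotient map $q:\R^N\times H\to\T$ of $\R^N\times(L^*-\partial W)$. Because $L$ is countable and $|\partial W|=0$, the set
\[
L^*-\partial W \ = \ \bigcup_{\ell\in L}(\ell^*-\partial W)
\]
has Haar measure zero in $H$. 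Using a fundamental domain $F$ for $\cL$ in $\R^N\times H$ and Fubini (the Haar measure on $\T$ being the quotient of the product Haar measure), $q^{-1}(B_\T)\cap F$ is contained in a countable union of slices of $H$-measure zero, so $\lambda(B_\T)=0$. Thus for $\lambda$-a.e.\ $\xi$ the fibre $\beta^{-1}(\xi)$ is a singleton, which proves the first assertion.

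For the \emph{in particular} part, I would next show unique ergodicity. The space $\Omega(\Lambda)$ is compact (Lemma \ref{lem:DeloneMetric}), so there exists at least one $\R^N$-invariant probability measure $\mu$. For any such $\mu$, the pushforward $\beta_*\mu$ is an $\R^N$-invariant probability on $\T$, and since the Kronecker flow $(\T,\omega)$ is uniquely ergodic, $\beta_*\mu=\lambda$. Because $\beta$ is a.s.\ injective, $\mu$ is determined by $\lambda$: for any Borel $A\ssq\Omega(\Lambda)$ one has
\[
\mu(A) \ = \ \mu\bigl(A\smin \beta^{-1}(B_\T)\bigr) \ = \ \lambda\bigl(\beta(A\smin \beta^{-1}(B_\T))\bigr),
\]
the last set being $\lambda$-measurable since $\beta$ restricts to a Borel bijection between the full-measure sets $\Omega(\Lambda)\smin\beta^{-1}(B_\T)$ and $\T\smin B_\T$. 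This pins down $\mu$ uniquely and simultaneously exhibits the measure-theoretic isomorphism: the restriction $\beta:\Omega(\Lambda)\smin\beta^{-1}(B_\T)\to\T\smin B_\T$ is a measurable bijection between sets of full measure that intertwines $\varphi$ and $\omega$, which is exactly the definition of measure-theoretic isomorphism.

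The main obstacle I expect is the measure-theoretic bookkeeping on the quotient $\T$: one has to verify carefully that a Haar-null set in $H$ gives rise to a $\lambda$-null set in $\T$, and that $\beta$ restricted to the complement of the bad set is a genuine Borel bijection (in particular, that its inverse is measurable). The first point is a routine Fubini/fundamental-domain argument; the second follows from the continuity and surjectivity of $\beta$ (Proposition \ref{prop:DDSFlowMorphism}) together with standard descriptive set theory, since both spaces are compact metric. Everything else is a direct consequence of Lemma \ref{lem:dichotomy} and the unique ergodicity of the Kronecker flow.
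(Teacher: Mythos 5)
Your proposal is correct and follows essentially the same route the paper indicates: the paper derives the singleton-fibre statement from Lemma \ref{lem:dichotomy} together with the countability of $L^*$ (so that $L^*-\partial W$ is Haar-null when $|\partial W|=0$), and then deduces unique ergodicity and the measure-theoretic isomorphism from the almost-everywhere injectivity of $\beta$ over the uniquely ergodic Kronecker flow, citing \cite{baakeEtAl:CharacterizationOfModelSets} for the details. Your write-up simply fills in the routine fundamental-domain/Fubini and Lusin--Souslin measurability steps that the paper leaves implicit.
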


  \begin{remark}
   \label{rem:DDSWeakHulls}
   Whenever we consider  Delone dynamical systems which arise from proper model
   sets, the preceding results  on the torus parametrization form a basis for  our treatment.
However, we will also consider the dynamical hull of weak model
sets, which are not proper. In this case, we cannot appeal to the
previous  results. In fact, if $W$ is compact with
$\interior(W)=\emptyset$ the hull of $\Oplam (W)$ must  contain the
empty set and  there can not  exist a torus parametrization as the
empty set is fixed by the action, whereas no point of the torus is
fixed by the action.
  \end{remark}

  In the sequel, we will often deal with proper windows $W$ and $\Lambda =
  \Oplam (W)$. We will then also need to replace the window $W$ by any of its
  translates $W+\vartheta$, $\vartheta\in H$. In this case, Proposition
  \ref{prop:DDSFlowMorphism} (applied to $W + \vartheta$ instead of $W$) yields
  a unique flow morphism
\begin{equation}\label{r.flow_morphism}
 \beta_\vartheta: (\Omega(\Oplam(W+\vartheta)),\varphi) \longrightarrow (\T,\omega),
\end{equation}
which sends     $\Oplam(W+\vartheta)$ to $0$.  For $\vartheta =0\in
H$ we will still write $\beta$ instead of  $\beta_0$.

\Paragraph{ Uniform distribution and asymptotic densities.}
Densities of subsets of Euclidean space will play an important role
in our considerations. Here, we discuss the necessary tools.

\smallskip

In the following, we use the partial
  ordering on $\R^N$ which is given by $s\leq t \equi s_i\leq t_i$ for all
  $i=1\ld N$. Given $t\in\R$, we let $\bar t=(t\ld t)\in\R^N$. Thus

  $$F_t:=\{s\in\R^N\mid -\bar t\leq s \leq \bar t\}$$
  is a cube of sidelength $2t$
  and volume $(2t)^N$.

\smallskip

Whenever $S$ is a uniformly discrete  subset of $\R^N$ we  define
its \textit{asymptotic density} by
$$\nu_S := \limsup_{t\to \infty} \frac{ \sharp S\cap F_t}{\lambda(F_t)},$$ where
$\sharp A$ denotes the cardinality of $A$.  If the limsup is actually a limit,
we call it the \textit{density} of the set $S$.  Model sets provide an instance
where densities tend to exist rather generally. This is sometimes discussed
under the header 'uniform distribution'. In order to state the corresponding
result we will need one more piece of notation: Let $(\R^N,H,\cL)$ be a
CPS. Then a subset $\cD$ of $\R^N\times H$ is called a \textit{fundamental domain}
of $(\R^N \times H) / \cL$ if it contains exactly one representative of any
element in the quotient group. As is well-known, the volume of a (measurable)
fundamental domain does not depend on the choice of the actual fundamental
domain. We denote this volume by $\vol(\cL)$. Note that in the Euclidean case
$\vol(\cL)=\det(A)$. We can now recall a result from \cite{Moody}, which in our
setting gives the following.

\begin{theorem}[Uniform distribution for model
sets \cite{Moody}]\label{t:uniform-distribution} Let $(\R^N,H,\cL)$
be a CPS and $W\ssq H$ measurable. Then, the following holds:

(a) For  almost every $\vartheta\in H$ (with respect to Haar measure
on $H$) the density of $\Oplam (W +\vartheta)$ exists and is given
by $ \frac{|W|}{\vol(\cL)}$.

(b) If $W$ is compact,  the inequality
$$\limsup_{t\to \infty} \frac{ \sharp \Oplam (W + \vartheta) \cap F_t}{\lambda(F_t)} \leq
\frac{|W|}{\vol(\cL)}$$ holds for all $\vartheta \in H$.

(c) If $W$ is open, the inequality
$$\liminf_{t\to \infty} \frac{ \sharp \Oplam (W + \vartheta) \cap F_t}{\lambda(F_t)} \geq
\frac{|W|}{\vol (\cL)}$$ holds for all $\vartheta \in H$.
\end{theorem}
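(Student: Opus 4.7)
The plan is to exploit the unique ergodicity of the $\R^N$-action $\omega_s(\xi) = \xi + [s,0]_\cL$ on the torus $\T = (\R^N \times H)/\cL$, via a smearing construction that converts lattice counts into ergodic averages of continuous functions. Let $\mu$ denote normalised Haar measure on $\T$; it is the push-forward of $\vol(\cL)^{-1}\,(\lambda \otimes \mu_H)$ under the quotient map restricted to a fundamental domain of $\cL$, where $\mu_H$ is Haar on $H$. Fix a nonnegative $\varphi \in C_c(\R^N)$ with $\int \varphi = 1$ and $\supp\varphi \ssq C_r$, and for $f \in L^1(H)$ of compact support set
\[
  \Phi_f([s,h]_\cL)\ =\ \sum_{\ell \in \cL} \varphi(s + \pi_G(\ell))\, f(\pi_H(\ell) - h),
\]
which is locally finite, well-defined on $\T$, and satisfies $\int_\T \Phi_f\,d\mu = \int f / \vol(\cL)$ by unfolding over a fundamental domain. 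When $f \in C_c(H)$, $\Phi_f$ is continuous, so unique ergodicity gives uniform convergence $\lambda(C_t)^{-1}\int_{C_t} \Phi_f(\omega_s(\xi))\,ds \to \int f/\vol(\cL)$ in $\xi \in \T$. Specialising $\xi = [0,-\vartheta]_\cL$, exchanging sum and integral, and using the pointwise bounds $\chi_{C_{t-r}}(-x) \leq \int_{C_t}\varphi(s+x)\,ds \leq \chi_{C_{t+r}}(-x)$ together with $\lambda(C_{t\pm r})/\lambda(C_t) \to 1$, I obtain, for every nonnegative $f \in C_c(H)$, that
\[
  \frac{1}{\lambda(C_t)} \sum_{\ell \in \cL,\, \pi_G(\ell) \in C_t} f(\pi_H(\ell) - \vartheta)\ \longrightarrow\ \frac{\int f}{\vol(\cL)}\quad \text{uniformly in } \vartheta.
\]

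Parts (b) and (c) now follow by Urysohn-sandwiching $\chi_W$. For (b), outer regularity of $\mu_H$ combined with Urysohn's lemma yields $f \in C_c(H)$ with $f \geq \chi_W$ and $\int f \leq |W| + \eps$; bounding $N_t(\vartheta) := \sharp(\Oplam(W+\vartheta) \cap C_t)$ above by the weighted sum for $f$ gives $\limsup_t N_t(\vartheta)/\lambda(C_t) \leq (|W|+\eps)/\vol(\cL)$, and letting $\eps \searrow 0$ finishes. For (c), inner regularity of the open $W$ supplies a compact $K \ssq W$ with $|K| \geq |W|-\eps$, and Urysohn produces $f \in C_c(H)$ with $\chi_K \leq f \leq \chi_W$ and $\int f \geq |W| - \eps$; the weighted sum now lower-bounds $N_t(\vartheta)/\lambda(C_t)$ by a quantity converging to $(|W|-\eps)/\vol(\cL)$.

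For part (a) with $W$ measurable and $|W|<\infty$ (possibly $|\partial W|>0$), the two-sided sandwich fails, and I would invoke Birkhoff's pointwise ergodic theorem for the $\R^N$-action along the F{\o}lner sequence $(C_t)$, applied to $\Phi_{\chi_W} \in L^1(\T,\mu)$: for $\mu$-almost every $\xi \in \T$ the ergodic average converges to $|W|/\vol(\cL)$. Since the quotient $\R^N \times H \to \T$ is measure-preserving on a fundamental domain, Fubini translates this into: for $\mu_H$-a.e.\ $h \in H$, the ergodic average at $[s_0,h]_\cL$ converges for $\lambda$-a.e.\ $s_0 \in \R^N$. A boundary-strip comparison of $\int_{C_t}$ versus $\int_{C_t+s_0}$ -- their symmetric difference has volume $O(t^{N-1})$ while $\Phi_{\chi_W}$ is uniformly bounded -- upgrades ``for some $s_0$'' to convergence at $[0,h]_\cL$ itself, i.e.\ $\widetilde N_t(\vartheta) := \lambda(C_t)^{-1}\int_{C_t} \Phi_{\chi_W}([s,-\vartheta]_\cL)\,ds \to |W|/\vol(\cL)$ for $\mu_H$-a.e.\ $\vartheta$. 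Finally, $N_t(\vartheta)/\lambda(C_t) - \widetilde N_t(\vartheta)$ is bounded above by the lattice count in the annular shell $C_{t+r}\smin C_{t-r}$, which the uniform estimate from the first paragraph (applied to a continuous $f \geq \chi_{\bar W}$) shows to be $o(1)$ uniformly in $\vartheta$; this delivers (a). The main obstacle is precisely this last passage: unique ergodicity alone only handles continuous test functions, so reaching a pointwise-in-$\vartheta$ statement for the discontinuous $\chi_W$ requires the Birkhoff/Fubini/translation argument, which is what forces the ``almost every $\vartheta$'' qualifier in (a).
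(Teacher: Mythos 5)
Your proposal is correct and follows essentially the same route as the paper's own sketch: the same smearing of the counting problem into an average of $\Phi_f$ over the torus $(\R^N\times H)/\cL$, unique ergodicity (Oxtoby) plus Urysohn sandwiching for (b) and (c), and Birkhoff's theorem combined with the translation/Fubini argument for the almost-every-$\vartheta$ statement in (a). You merely spell out in more detail the steps the paper compresses into ``a short computation shows'' (the boundary-shell comparison between the smeared average and the actual count), so there is nothing to add.
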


\begin{remark} For recent results of the type presented in the
theorem  see also  \cite{HR}.
\end{remark}

\begin{proof} Part (a) of the Theorem is shown  in \cite{Moody}.
Inspecting the proof there one can easily infer  part (b) and (c) as
well. For the convenience of the reader we sketch a proof. This
proof can be seen as a  variant of the considerations in
\cite{Moody}: Consider  $\T= (\R^N\times H) / \cL$ and let  $\sigma
: \R^N \to [0,\infty)$ be a continuous function  with compact
support and $\int_{\R^N}  \sigma d s = 1$.
 Define  the function
$$ f: \T\longrightarrow [0,\infty) \quad , \ f(\xi) \ := \ \sum_{(s,h) \in -\xi}
   \sigma (s) 1_W (h),$$ where $1_W$ denotes the characteristic function of $W$.
   Then, $f$ is a measurable bounded function. (Note that the sum has only
   finitely many non-vanishing terms as both $\sigma$ and $1_W$ vanish outside
   compact sets.)

Define for $\xi = [s,h]_\cL$ the set $\Oplam (\xi):= \Oplam ( W+ h) - s$ and
note that this is indeed well defined. Then a short computation (compare
\cite{Moody}) shows that
$$\left|\frac{1}{\lambda(F_t)} \int_{F_t} f(\omega_s (\xi)) ds
 - \frac{\sharp \Oplam (\xi)\cap F_t}{\lambda(F_t)}\right|
\ \to \  0 \quad  \text{ as } t\to \infty
$$
for all $\xi \in \T$. Thus, the desired statements (a),(b), (c) will
follow from the  corresponding statements for the averages
$$a_t (\xi)\ := \ \frac{1}{\lambda(F_t)} \int_{F_t} f(\omega_s (\xi)) ds.$$
These statements in turn hold as $(\T,\R^N)$ is uniquely ergodic:

\smallskip

(a) Birkhoff's ergodic theorem directly implies convergence of the
averages $a_t (\xi)$ for almost every $\xi\in \T$. As convergence
for $\xi$ clearly implies convergence for all $\omega_s (\xi)$,
$s\in \R^N$, the almost sure convergence in $\xi \in \T$ implies
almost-sure convergence in $\vartheta \in H$.

\smallskip

(b) If we replace $1_W$ by a continuous function with compact
support, then $f$ is continuous and  we even have uniform
convergence in $\xi \in\T$ by Oxtoby's theorem. Approximating $1_W$
from above by continuous functions with compact support we obtain
the statement (b) uniformly in $\xi\in\T$ and hence also in
$\vartheta\in H$.

\smallskip

(c) This follows   by replacing the  approximation from above in (b)
by approximation from below. More specifically, by regularity of the
Haar measure on $H$ we can chose a compact set $K \subset W$ whose
measure is as close to the measure of $W$ as we wish. Now, invoking
Urysohn's Lemma we can chose a continuous $f$ with compact support
and $1_K\leq f \leq 1_W$.
\end{proof}

\Paragraph{ Topological Entropy.}
  In this section we introduce the background from entropy theory.  Given an
  $\R^N$-action $\phi$ on a compact metric space $X$ (whose metric we denote by
  $d$), we say $x,x'\in X$ are \emph{$(\varepsilon, t)$-separated} if $$
  \max_{s\in F_t} d(\phi_s(x), \phi_s(x')) \geq \varepsilon. $$ A subset $S \ssq
  X$ is called \emph{$(\varepsilon,t)$-separated} if its elements are all
  pairwise $(\varepsilon,t)$-separated. By $N(\varphi, \varepsilon, t)$ we
  denote the maximal cardinality of an $(\varepsilon, t)$-separated set. The
  \emph{topological entropy} of $\phi$ is defined as
   $$ \htop(\phi) \ := \ \lim_{\varepsilon \rightarrow 0}
   h_\varepsilon(\phi) = \sup_{\varepsilon >0} h_\varepsilon
   (\phi),$$
   where
   $$h_\varepsilon(\phi) \ = \  \limsup_{t \rightarrow \infty} \frac{1}{\lambda(F_t)}
   \log N(\phi, \varepsilon, t). $$

\medskip

We will be particularly interested in the topological entropy of a
dynamical system $(\Omega(\Oplam(W)),\R^N)$ arising from a CPS and a
proper $W$. In this case, there is a  torus parametrization
\begin{equation}
\label{eq:situationabove} \beta : \Omega (\Oplam (W))\longrightarrow
\T,
\end{equation}
 due to Proposition \ref{prop:DDSFlowMorphism} (applied with $\Lambda = \Oplam (W)$). As $(\T,\R^N)$ is
an isometric flow it has entropy zero.  This has some consequences
for the topological entropy of $(\Omega(\Oplam(W)),\R^N)$.  As it is
instructive to our considerations below  we discuss next some
abstract background in the subsequent two remarks.

\begin{remark}[Positive entropy comes from  fibres] \label{rem:Bowen}   If
$(Y,\psi)$ is a factor of $(X,\phi)$ we can relate the entropy of
the two systems. Indeed, we have
$$\htop(\psi)  \leq    \htop(\phi)$$ (e.g.\
\cite{katok/hasselblatt:1997}). It is then also  possible to obtain
an upper bound on $\htop(\phi)$ by considering the ``the topological
entropy realised in single fibres''. In order to be more specific,
let $\eta: X\longrightarrow Y$ be the factor map and denote
 for any $\xi \in Y$, the maximal cardinality of an
   $(\varepsilon,t)$-separated subset of the fibre $\eta^{-1}(\xi)$ by
   $N^\xi(\phi, \varepsilon, t)$. Now let
 $$ \htop^\xi(\phi) \ :=\  \lim_{\varepsilon \rightarrow 0} h^\xi_\varepsilon(\phi)\ , \quad \mbox{ where }
h^\xi_\varepsilon(\phi) \ := \  \limsup_{t \rightarrow \infty}
\frac{1}{\lambda (F_t)}
   \log N^\xi(\phi, \varepsilon, t).$$
Then, clearly
$$ \htop^\xi (\phi)\  \leq \  \htop (\phi)$$
for any $\xi \in Y$.  As shown in
\cite{bowen:EntropyForGroupEndomorphismsAndHomogeneousSpaces} we
furthermore  have the   bound
$$ \htop(\phi) \ \leq \  \htop(\psi) + \sup_{\xi \in Y}
    \htop^\xi(\phi).$$
If $\htop(\psi)=0$ the two preceding inequalities give
$$ \htop(\phi) \ = \ \sup_{\xi \in Y} \htop^\xi (\phi).$$
So, in this case the (positive) topological entropy of $\phi$ must
be realised already in  single fibres. Now, this is exactly the
situation described in \eqref{eq:situationabove}. In line with the
preceding considerations our approach to positive entropy of
$(\Omega(\Oplam(W)),\varphi)$ below will be based on showing
positive entropy already in the fibres. We will do so by exhibiting
what we call embedded fullshifts (see below for details).
\end{remark}

\begin{remark}[Positive entropy implies thick boundary] We also note that whenever $(X,\phi)$ is uniquely ergodic
and is measure theoretically isomorphic to a  factor $(Y,\psi)$ of
zero topological entropy then the topological entropy of $(X,\phi)$
must vanish as well. The reason is that the metric entropy (which we
will not define as we do not need it below) is invariant under
measure theoretic isomorphisms. Hence, the metric entropy of
$(X,\phi)$ and $(Y,\psi)$ must agree. As $(X,\phi)$ is uniquely
ergodic its topological entropy agrees with its metric entropy due
to a variational principle, see e.g. \cite{TZ}. In our situation
described in  \eqref{eq:situationabove}  we obtain then from
Corollary \ref{cor:metricisom} that the topological entropy of
$(\Omega (W),\R^N)$ must vanish whenever the boundary of $W$ has
measure zero. Now, this implies that examples of CPS with positive
topological entropy will necessarily have thick boundary and indeed
this will feature prominently in our constructions below.
\end{remark}

 \section{Embedded fullshifts and Topological Independence}
  \label{section:EmbeddedSubshiftsandTopologicalIndependence}
  In this section, we define a simple criterion, namely the existence of
  `embedded fullshifts', for positive entropy of the dynamical hull of a
  uniformly discrete set in $\R^N$.  For hulls coming from (weak) model sets, we
  then relate this to the local structure of the window and introduce the
  concepts of local topological and metric independence. These will be the main
  tools to prove positivity of entropy in the constructions in the later
  sections.

\medskip

Whenever we meet a CPS $(\R^N,H, \mathcal{L})$ in this and the
remaining sections the group $H$ will be abelian,  metrizable and
$\sigma$-compact, compare the discussion on Page
\pageref{page:cpsintroduced}.

\Paragraph{ Embedded fullshifts.} Embedded fullshifts are our key
concept in providing positive topological entropy.

\smallskip

\begin{mydef}[Embedded fullshift]\label{def:DSSubshift}
Let $\Lambda$ be a uniformly discrete subset of $\R^N$. An
\textit{embedded fullshift} in $\Omega (\Lambda)$ is a  pair $(\Xi,
S)$ consisting of a closed subset  $\Xi$ of $\Omega (\Lambda)$ and a
subset  $S$ of $\R^N$ such that the following holds:

\begin{itemize}
\item The set $S$ has positive asymptotic density, i.e.
    $$\nu_S =\limsup_{t\to \infty} \frac{\sharp S\cap F_t}{\lambda(F_t)} >0.$$

\item The set
$$U:=\bigcup_{\Gamma\in\Xi} \Gamma \subset \R^N$$
is uniformly discrete.

\item For any subset $S'$ of $S$ there exists a $\Gamma\in\Xi$ with
$$\Gamma \cap S = S'.$$


\end{itemize}
The elements of $S$  above are called {\em free points} of the
embedded fullshift. The set $U$ is called  {\em grid} of the
embedded fullshift. The quantity  $\nu_S$ is the \textit{asymptotic
density} of the embedded fullshift.

If $(\Xi,S)$ is an embedded fullshift in $\Omega(\Lambda)$ with $\Xi
\ssq \Omega'$ for some $\Omega'\ssq \Omega (\Lambda)$ we say that
$\Omega'$ \textit{contains an embedded fullshift}.
\end{mydef}

\begin{remark} \label{rem:es} Consider an embedded fullshift with free points $S$
  and grid $U$.

(a) We clearly have $S\ssq U$ (and therefore $S$ is uniformly
discrete). The points of $S$ are free in the sense that we can
choose any subset of $S$ and exactly this will be the  subset from
$S$ appearing  in some $\Gamma \in \Xi$. In later arguments we will
not only have to control occurrence of points of $S$ but also
non-occurrence of points of $S$.  We will need the set $U$  in order
to treat this non-occurrence.

(b)  We  call an embedded fullshift $(\Xi,S)$ with grid $U$
\textit{maximal} if $(\Xi, S\cup\{u\})$ is not an embedded fullshift
for any $u\in U\setminus S$. In this case, we may think of the
elements of $U\setminus S$ as points \textit{forced by the embedded
fullshift}. It is not hard to see (by an induction procedure) that
any embedded fullshift can be extended to a maximal one.

(c) Let $(\Xi, S)$ be an embedded fullshift. Then, $(\tilde \Xi, S)$
with
$$\tilde \Xi :=\closure( \{ \Gamma\in \Xi : \Gamma \cap S \neq
\emptyset\}),$$ where the closure is taken in the hull of $\Lambda$,
will also be an embedded subhift (with $\tilde \Xi \ssq \Xi$).
Indeed, the only possible difference between $\tilde \Xi $ and $\Xi$
are those elements of $\Xi$, which do not contain any element of
$S$.

(d)  Consider a  CPS $(\R^N, H, \cL)$ and a proper window $W$ and
$\Lambda = \Oplam (W)$. Then, for $\xi = [s,h]_\cL$,  all elements
of $\beta^{-1} (\xi)$ are contained in the uniformly discrete set
$\Oplam (W+h) -s$ by Proposition \ref{prop:DDSFlowMorphism}.   So,
for any subset $\Xi$ of $\beta^{-1} (\xi)$ we  have uniform
discreteness of $\bigcup_{\Gamma\in\Xi} \Gamma$. So, the uniform
discreteness of the grid is automatically satisfied for a fullshift
embedded in such  a fibre. Also, in this situation  if $(\Xi,S)$ is
an embedded fullshift in the fibre $\beta^{-1} (\xi)$, then
$(\beta^{-1} (\xi), S)$ is an embedded fullshift as well.  From
Proposition \ref{prop:DDSFlowMorphism}  and Lemma
\ref{lem:dichotomy} we then infer that the grid for this fullhift is
given by  $\Oplam (W + h) - s$.

(e)  Whenever the pair  $(\Xi,S)$ is an embedded fullshift, then so
is the translated pair $(\varphi_s (\Xi), \varphi_s (S))$ for any
$s\in\R^N$.

(f) We will be mostly interested in embedded fullshifts contained in
either $\Omega (\Lambda)$ or in the fibres $\eta^{-1}(\xi) \ssq
\Omega (\Lambda)$ of some flow morphism $\eta :
\Omega(\Lambda)\longrightarrow Y$.
\end{remark}

\medskip

The following  provides a  simple characterization for existence of
an embedded fullshift.

  \begin{prop}
   \label{prop:DSSubshift} Let $\Lambda$ be a uniformly discrete subset of $
   \R^N$. Then, $\Omega (\Lambda)$ contains an embedded fullshift if
   and only if there exist   $S\ssq \R^N$ and a uniformly discrete $U \ssq \R^N$
   with the following two properties:
   \begin{enumerate}
    \item The set $S$ has positive asymptotic density.

    \item For all finite $F\ssq S$ and $a\in \{0,1\}^F$, there  exists a $\Gamma \in
    \Omega(\Lambda)$ with $\Gamma \ssq U$ and such that for
    $s\in F$
     $$ s \in \Gamma \ \Longleftrightarrow \  a_s = 1.$$
   \end{enumerate}
\end{prop}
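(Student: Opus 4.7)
The plan is to prove the two implications separately; the backward direction is the substantive one and is handled by a compactness and exhaustion argument.

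For the forward implication, I would suppose $(\Xi,S)$ is an embedded fullshift in $\Omega(\Lambda)$ with grid $U$. Given a finite $F\ssq S$ and $a\in\{0,1\}^F$, I set $S':=\{s\in F:a_s=1\}\ssq S$ and apply the third bullet of Definition \ref{def:DSSubshift} to produce $\Gamma\in\Xi\ssq\Omega(\Lambda)$ with $\Gamma\cap S=S'$. Then $\Gamma\ssq U$ by definition of the grid, and $s\in\Gamma\equi a_s=1$ for $s\in F$ is immediate. This direction is essentially a tautology.

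For the backward implication, I first observe that applying (2) to $F=\{s\}$ with $a_s=1$ for an arbitrary $s\in S$ forces $S\ssq U$; hence $S$ is uniformly discrete and the truncations $F_n:=S\cap C_n$ are finite. Fix an arbitrary $S'\ssq S$. Applying (2) to $F_n$ with the $\{0,1\}$-vector $a^{(n)}_s=\ind_{S'}(s)$ yields $\Gamma_n\in\Omega(\Lambda)$ with $\Gamma_n\ssq U$ and $\Gamma_n\cap F_n=S'\cap F_n$. Since $\Omega(\Lambda)$ is a closed subset of the compact space $(\cF,d)$ of Lemma \ref{lem:DeloneMetric}, I extract a convergent subsequence $\Gamma_{n_k}\to\Gamma_{S'}\in\Omega(\Lambda)$.

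The key verification is that $\Gamma_{S'}\ssq U$ and $\Gamma_{S'}\cap S=S'$. The inclusion $\Gamma_{S'}\ssq U$ follows because $U$, being uniformly discrete, is closed in $\R^N$ and membership in closed subsets of $\R^N$ is preserved under the stereographic Hausdorff limit. For $s\in S'$ I note that $s\in\Gamma_{n_k}$ for all sufficiently large $k$, so the constant sequence $s$ witnesses $s\in\Gamma_{S'}$. For $s\in S\smin S'$, any approximating sequence $s_k\in\Gamma_{n_k}\ssq U$ with $s_k\to s$ must eventually equal $s$ by the uniform discreteness of $U$ together with $s\in U$; since $s\notin\Gamma_{n_k}$ for all large $k$, no such sequence exists and $s\notin\Gamma_{S'}$. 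This last point, controlling non-occurrence of $s$ in the limit, is the only real obstacle, and it is exactly what necessitates the common uniformly discrete grid $U$ and the inclusion $S\ssq U$.

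Finally, I define $\Xi:=\closure\{\Gamma_{S'}:S'\ssq S\}$, with closure taken inside $\Omega(\Lambda)$. Then $\Xi$ is closed by construction; since every $\Gamma_{S'}$ is contained in the closed set $U$, so is every element of $\Xi$, and therefore the grid $\bigcup_{\Gamma\in\Xi}\Gamma\ssq U$ is uniformly discrete; and each $S'\ssq S$ is realized as $\Gamma_{S'}\cap S$ with $\Gamma_{S'}\in\Xi$. Combined with the positive asymptotic density of $S$ assumed in (1), this exhibits $(\Xi,S)$ as an embedded fullshift in $\Omega(\Lambda)$, completing the proof.
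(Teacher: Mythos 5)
Your proof is correct and follows essentially the same route as the paper: the forward direction is tautological, and the backward direction is precisely the ``simple compactness argument'' the paper alludes to (extract a limit along exhausting finite subsets $F_n=S\cap C_n$, use closedness of $U$ to keep the limit inside $U$, and use uniform discreteness of $U$ together with $S\ssq U$ to rule out spurious points of $S$ appearing in the limit), after which one closes up the resulting family to get $\Xi$. The only cosmetic difference is that the paper takes $\Xi$ to be the closure of all $\Gamma\in\Omega(\Lambda)$ with $\Gamma\ssq U$ and $\Gamma\cap S\neq\emptyset$ rather than of your selected limits $\Gamma_{S'}$; both choices work.
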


\begin{proof} If $\Omega(\Lambda)$ contains an embedded fullshift
there clearly exist $S\ssq \R^N$ and a uniformly discrete $U\ssq
\R^N$ satisfying (1) and (2). Conversely, if there exist $S\ssq
\R^N$ and a uniformly discrete $U\ssq \R^N$ satisfying (1) and (2)
we may define
$$\Xi':=\{ \Gamma\in\Omega (\Lambda) : \Gamma \cap S\neq \emptyset
\mbox{ and } \Gamma\ssq U\}.$$ Now, let  $\Xi$  be the closure of
$\Xi'$. Then, all elements in  $\Xi$ are contained in $U$ and a
simple compactness argument shows that for any subset $S'$ of $S$
there exists a $\Gamma\in\Xi$ with $\Gamma \cap S = S'$.  Hence,
$(\Xi, S)$ is an embedded fullshift contained in $\Omega (\Lambda)$.
\end{proof}

\begin{remark}\label{rem:es-weakmodel}
 Let  $\Lambda$ be  a (weak) model set coming from a CPS $(\R^N,H,
\mathcal{L})$ such that $\Omega (\Lambda)$  satisfies the conditions
(1) and (2) of the preceding proposition. Let $(\Xi,S)$ be the
embedded fullshift constructed in the proof of the preceding
proposition i.e. $\Xi:= \closure (\Xi')$ with $\Xi': =  \{
\Gamma\in\Omega (\Lambda) : \Gamma \cap S\neq \emptyset \mbox{ and }
\Gamma\ssq U\}$, and let $U'=\bigcup_{\Gamma\in\Xi'} \Gamma$.
 Then, the inclusions
 $$S\ssq U' \ssq t + L $$ turn out to be valid for any $t\in S$. Indeed, shifting $S$ and $U$ by $-t$ for
 $t\in S$, we may assume without loss of generality $t=0$ and $0\in S$. By $0\in
 L$, we infer from Proposition \ref{prop:eineroderallepunkte} that any $\Gamma
 \in \Xi$ containing $0$ must be contained in $L$. Hence, we have that
 \[
    \wh U \ = \ \bigcup_{\Gamma\in\Xi: 0\in\Gamma} \Gamma \ \ssq \ L \
 \]
 and since $\wh U\ssq U$ is a discrete set, a simple compactness argument shows
 $\wh U= U'$. Hence $U'\ssq L$, and since further any $s\in S$ is clearly
 contained in $U'$, this shows the claimed statement.
\end{remark}


\bigskip

The relevance of embedded fullshifts  comes from the following
lemma.

  \begin{lemma}[Embedded fullshift implies positive entropy]\label{lem:esimpliespositiveh}
   Let $\Lambda$ be a uniformly discrete subset of $\R^N$.  If $\Omega(\Lambda)$
   contains an embedded fullshift of asymptotic density $\nu_S$, then
    $$ \htop(\varphi) \geq \nu_S \cdot  \log 2.$$
  \end{lemma}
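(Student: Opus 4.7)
The plan is to use the freedom in choosing subsets of $S$ to produce exponentially many orbits that are pairwise $(\varepsilon,t)$-separated. Fix an embedded fullshift $(\Xi,S)$ with grid $U$, and let $r>0$ be a uniform-discreteness constant for $U$. For $t>0$ set $S_t := S\cap C_t$. For every subset $F\ssq S_t\ssq S$, use the defining property of the embedded fullshift to select some $\Gamma_F\in\Xi$ with $\Gamma_F\cap S = F$; in particular $\Gamma_F\cap S_t = F$, so distinct $F$ produce distinct $\Gamma_F$. We then aim to check that the $2^{\sharp S_t}$ points $\{\Gamma_F\}_F$ are pairwise $(\varepsilon,t)$-separated for all sufficiently small $\varepsilon>0$.

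The decisive step is this separation estimate. Let $F\neq F'$ and choose $s\in F\triangle F'$; without loss of generality $s\in F\setminus F'$, so $s\in \Gamma_F$ while $s\notin\Gamma_{F'}$. Since $\Gamma_F,\Gamma_{F'}\ssq U$ and $U$ is $r$-uniformly discrete, every $u\in\Gamma_{F'}$ satisfies $\|u-s\|\geq r$ (using $s\in U$ and $s\neq u$); hence $(\Gamma_{F'}-s)\cap B_r(0)=\emptyset$, whereas $0\in\Gamma_F-s$. The metric $d$ on $\cF$ from Lemma \ref{lem:DeloneMetric} coincides on bounded regions with the local Hausdorff metric, so uniform continuity of the stereographic projection near the origin produces a constant $\varepsilon_0=\varepsilon_0(r)>0$ with the property that
\[
0\in A,\ \ B\cap B_r(0)=\emptyset \ \Longrightarrow\ d(A,B)\geq\varepsilon_0
\]
for all $A,B\in\cF$. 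Fix any $0<\varepsilon<\varepsilon_0$. Since $s\in S_t\ssq C_t$, taking the shift $v=s$ yields
\[
d\bigl(\varphi_s(\Gamma_F),\varphi_s(\Gamma_{F'})\bigr) \ = \ d\bigl(\Gamma_F-s,\Gamma_{F'}-s\bigr) \ \geq\ \varepsilon_0 \ >\ \varepsilon,
\]
which establishes $(\varepsilon,t)$-separation of $\Gamma_F$ and $\Gamma_{F'}$.

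Consequently $N(\varphi,\varepsilon,t)\geq 2^{\sharp S_t}$ for all $t>0$ and this fixed $\varepsilon$. Taking logarithms, dividing by $\lambda(C_t)$ and passing to the $\limsup$ as $t\to\infty$ gives
\[
h_\varepsilon(\varphi)\ \geq\ \limsup_{t\to\infty}\frac{\sharp(S\cap C_t)}{\lambda(C_t)}\log 2 \ =\ \nu_S\log 2,
\]
and since $\htop(\varphi)\geq h_\varepsilon(\varphi)$ the claim $\htop(\varphi)\geq\nu_S\log 2$ follows. The main technical subtlety is the quantitative separation step, namely extracting $\varepsilon_0=\varepsilon_0(r)$ from $r$ in the somewhat unusual compactified metric $d$ of Lemma \ref{lem:DeloneMetric}; once this is available, the remainder is a routine combinatorial count combined with the definition of $\nu_S$.
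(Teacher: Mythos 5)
Your proof is correct and follows essentially the same route as the paper: use the free points in $S\cap C_t$ to produce $2^{\sharp(S\cap C_t)}$ configurations that are pairwise separated after shifting to a point of the symmetric difference, then count. The only differences are cosmetic --- you are slightly more careful than the paper in extracting a separation constant $\varepsilon_0(r)$ for the compactified Hausdorff metric (the paper just asserts separation by $r$ itself), and you replace the paper's approximation by $\nu'<\nu_S$ with a direct monotonicity-of-$\limsup$ argument.
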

  \begin{proof}
Let  $S$ be the set of free points and   $U$ the grid of the
embedded fullshift.   Let $r>0$ such that different points of $U$
have   distance at least $r$.    Consider  $\Gamma, \Gamma'
   \in \Omega(\Lambda)$ with $\Gamma, \Gamma'\ssq U$ and $s\in
   \Gamma$ and $s\notin \Gamma'$ for some $s\in S$. By uniform
   discreteness of $U$ the set $\Gamma'$ then does not contain a
   point in the ball around $s$ with radius $r$. This gives
$$d(\varphi_{s}(\Gamma),   \varphi_{s}(\Gamma')) \geq r.$$
 Hence, any pair $\Gamma, \Gamma'
   \in \Omega(\Oplam(W))$ which satisfies the above for some $s \in S\cap  F_t$
   is $(r,t)$-separated.
Consider now  an arbitrary  $\nu' < \nu_S$. Then, there exist
arbitrarily large $t$   with
$$\sharp S\cap F_t \geq \nu' \cdot \lambda(F_t).$$
By the assumption on existence of an embedded fullshift we can chose
for  any finite subset $F$ of $S\cap F_t$ an element
$\Gamma_F\in\Omega (\Lambda)$ with $\Gamma_F \cap S = F$. Then, the
elements $\Gamma_F$ are $(t,r)$ separated by  the considerations at
the beginning of the proof. Hence, we have
$$N(\varphi,r,t)\geq 2^{\nu' \cdot \lambda(F_t)}.$$
This implies
$$h_r (\varphi) \geq \nu'  \cdot \log 2.$$
As $\nu' <\nu_S$ was arbitrary we infer $h_r (\varphi) \geq \nu_S
\log 2.$ Now, the desired statement follows from $\htop (\varphi)
\geq h_r (\varphi)$.
 \end{proof}

  \Paragraph{ Independence of sets and existence of embedded fullshifts.}
In this section we provide a condition for existence of an embedded
subshift.

\smallskip

Consider a CPS $(\R^N,H,\cL)$ and denote the neutral element of $H$ by $0$. We
sometimes write $0\in H$ in order to distinguish it from the origin in $\R^N$.

\smallskip

Consider a (weak) model set arising from the given CPS via a
relatively compact $W\ssq H$. Then the  problem of finding an
embedded fullshift with set of free points $S\ssq L$  in the
associated dynamical system is actually related to analyzing the
local structure of the window $W$ in some neighborhood of the points
$s^*$ for $s\in S$. In order to get a first idea on this issue the
following observation may be helpful:

Let $F\ssq L$ be a finite set, $a\in \{0,1\}^F$ arbitrary and $\vartheta\in H$
be given. Now, assume that
$$ \emptyset \neq  \left(\bigcap_{s\in F : a_s = 1} ( W - s^*) \setminus
\bigcup_{s\in F: a_s = 0} (W-s^*) \right) \cap  (L^* -\vartheta).$$
Then, there exists an $l\in L$ satisfying for any $s\in F$ that
 $(l^* -\vartheta) \in
W-s^* \Longleftrightarrow a_s = 1$. This gives for $s\in F$
$$s\in \Oplam (W + \vartheta) -l \Longleftrightarrow a_s = 1.$$
Thus, the set $\Gamma := \Oplam (W + \vartheta) - l$ respects the
choice of $F\ssq L$ given by $a$. Our dealings below will build
on this observation. However, two additional points will come up:

\begin{itemize}

\item We have to simultaneously deal with all finite subsets $F$ of a subset $S$
  of $L$.  In order to still provide the uniform discrete subset $U$ necessary
  for an embedded fullshift, we will need to require that the set $S^*=\{s^*\mid
  s\in S\}$ is relatively compact (see Lemmas
  \ref{lem:localIndependenceWithRespectTo1} and
  \ref{lem:localMetricIndependence}).

\item We will allow for one overall shift by $h\in H$.

\end{itemize}

\medskip

Motivated by the preceding considerations we give the following
definition. The finite  index set $F$ appearing in the definition
will later be a subset of $L$ or $L^*$.

\begin{mydef}[Independence with respect to $\DDD$]
Let $\DDD\ssq H$ be given. A finite family  $(A_s)_{s\in F}$ of
subsets of  $H$ is \emph{independent with
  respect to $\DDD$} if for all $a \in \{0,1\}^F$ we have
   $$\emptyset \neq \left( \bigcap_{s\in F : a_s = 1} A_i \setminus
     \bigcup_{s\in F : a_s = 0} A_i \right) \cap \DDD. $$ An infinite family of
   sets is called \emph{independent with respect to $\DDD$} if the condition
   above holds for each finite subfamily.  We say the window $W$ is
   \emph{independent in $P \ssq L^*$ with respect to $\DDD$}, if the family $W -
   p, p\in P,$ is independent  with respect to $\DDD$.
\end{mydef}

\smallskip

The following lemma relates these concepts to the existence of
embedded fullshifts. The lemma is our main tool to construct
embedded fullshifts (and hence, by Lemma
\ref{lem:esimpliespositiveh} examples with positive topological
entropy). In fact, we will apply the lemma in two situations, namely
for proper $W$ and $W$ with empty interior but of positive measure.
These situations will then be studied in the two subsequent
sections.  The lemma is formulated in a general version that
includes two parameters $\vartheta$ and $h$. However, for a first
reading it might be helpful to set them both to zero.

\begin{lemma}[Basic criterion for embedded fullshifts]
   \label{lem:localIndependenceWithRespectTo1}
   Let $(\R^N,H,\cL)$ be a CPS, $W\ssq H$ relatively compact and $\vartheta,h\in
   H$. If $\Oplam(W+h)$ possesses a subset $S$ of positive asymptotic density
   such that $S^*=\{s^*:s\in S\}$ is relatively compact and $W+h$ is independent
   in $S^*$ with respect to $L^*+(h-\vartheta)$, then
   $\Omega(\Oplam(W+\vartheta))$ contains an embedded fullshift.
  \end{lemma}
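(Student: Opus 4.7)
My plan is to build the embedded fullshift directly from translates of $\Oplam(W+\vartheta)$. For each finite non-empty $F\ssq S$ and each $a\in\{0,1\}^F\smin\{0\}$, the independence hypothesis provides $x=\ell_{F,a}^*+(h-\vartheta)\in L^*+(h-\vartheta)$ realising the pattern $a$ in the family $\{(W+h)-s^*\}_{s\in F}$. A routine application of the star map yields
\[
   x+s^*\in W+h\ \Longleftrightarrow\ (\ell_{F,a}+s)^*\in W+\vartheta\ \Longleftrightarrow\ s\in\Gamma_{F,a},
\]
where $\Gamma_{F,a}:=\Oplam(W+\vartheta)-\ell_{F,a}\in\Omega(\Oplam(W+\vartheta))$, so $s\in\Gamma_{F,a}\Leftrightarrow a_s=1$ for $s\in F$. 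I will then define $\Xi$ as the closure in $\Omega(\Oplam(W+\vartheta))$ of the family $\{\Gamma_{F,a}\}$, and produce the required element of $\Xi$ for an arbitrary $S'\ssq S$ by compactness.

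\textbf{Key step -- uniformly discrete grid.} The central difficulty will be to show that all the $\Gamma_{F,a}$, and hence all of $\Xi$, sit inside a single uniformly discrete set $U\ssq\R^N$. The pinning trick is that as soon as $a_{s_0}=1$ for some $s_0\in F$, the inclusion $\ell_{F,a}^*+s_0^*\in W+\vartheta$ forces $\ell_{F,a}^*\in W+\vartheta-\closure(S^*)$, a relatively compact set by the two compactness hypotheses on $W$ and on $S^*$. For $y\in\Gamma_{F,a}$, the identity $y^*=(y+\ell_{F,a})^*-\ell_{F,a}^*$ then gives $y^*\in V:=\closure\bigl((W+\vartheta)-(W+\vartheta)+\closure(S^*)\bigr)$, which is compact. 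Lemma \ref{lem:CPSMeyer} applied to $V$ will then imply that $U:=\Oplam(V)$ is uniformly discrete, and since $0\in(W+\vartheta)-(W+\vartheta)$ one also has $S\ssq U$. Closedness of $U$ ensures that every element of $\Xi$, not just of the defining family, still lies in $U$.

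\textbf{Realising arbitrary $S'\ssq S$.} For non-empty $S'$ I will pick some $s_0\in S'$, exhaust $S$ by finite sets $F_n\ni s_0$ with $F_n\nearrow S$, and apply the above construction to $a^{(n)}:=\ind_{S'}|_{F_n}$, which is non-zero thanks to $s_0$. Compactness of the hull yields a convergent subsequence $\Gamma_n=\Gamma_{F_n,a^{(n)}}\to\Gamma\in\Xi$; since each $s\in S$ is in $U$ and eventually in $F_n$, uniform discreteness of $U$ makes the equivalence ``$s\in\Gamma_n \Leftrightarrow s\in S'$'' pass to the limit, giving $\Gamma\cap S=S'$. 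The case $S'=\emptyset$ I will handle afterwards by applying the preceding step to the singletons $\{s_n\}$ for some sequence $s_n\in S$ with $\|s_n\|\to\infty$ (which exists because $\nu_S>0$ forces $S$ to be unbounded), and extracting a further subsequential limit inside the closed set $\Xi$: any fixed $s\in S$ is eventually distinct from $s_n$, hence absent from the element of $\Xi$ built for $\{s_n\}$ and therefore from the limit. Together with $\nu_S>0$ from the hypothesis, this verifies all three conditions of Definition \ref{def:DSSubshift} for $(\Xi,S)$.
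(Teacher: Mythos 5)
Your proof is correct and its engine is the same as the paper's: the independence hypothesis combined with the star map produces, for each finite $F\ssq S$ and $a\in\{0,1\}^F$, a lattice translate of $\Oplam(W+\vartheta)$ realising the pattern $a$ on $F$ (the paper writes this translate as $\Oplam(W+\vartheta)+m$ with $\bar m^*=h-\vartheta-m^*$, which is your $\Gamma_{F,a}$ up to the sign convention $L^*=-L^*$). The organisational differences are worth recording. The paper feeds the finite-pattern statement into its abstract characterisation of embedded fullshifts (the proposition following Definition \ref{def:DSSubshift}) and lets that proposition's compactness argument do the rest, whereas you run the diagonal limit by hand; this is why you must treat $S'=\emptyset$ separately, and your device of taking limits of the configurations built for far-away singletons $\{s_n\}$ handles that case correctly. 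Your grid also differs: the paper takes $U=\Oplam(W+h-S^*)$, while your pinning computation places every $\Gamma_{F,a}$ inside $\Oplam\bigl(\closure((W+\vartheta)-(W+\vartheta)+\closure(S^*))\bigr)$, which is the set the translates actually land in once one writes out $\ell_{F,a}^*\in W+\vartheta-\closure(S^*)$; this containment is airtight in your version (and still uniformly discrete by Lemma \ref{lem:CPSMeyer}), so nothing is lost. In short: same key idea, slightly more self-contained and more carefully bookkept execution.
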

\begin{remark} Note that we do not require that $W$ has non-empty interior.
So, the sets $\Oplam (W +v)$ may not be relatively dense. However,
they are uniformly discrete and this is all we need to consider the
hull.
\end{remark}

 \begin{proof}
 We will show that the  conditions (1)  and (2) of Proposition \ref{prop:DSSubshift}
  for existence of  an
 embedded fullshift are met for $S$ as in the statement of the lemma
 and
 $$U:= \Oplam (W +h -S^*).$$
 Note that $U$ is indeed uniformly discrete as $W + h -S^*$ is relatively
 compact, compare Lemma \ref{lem:CPSMeyer}.

\smallskip

Condition (1) is met by assumption.  To show condition (2) fix a
finite subfamily $F\ssq S$ and $a \in \{0,1\}^F$. Then, by
independence of $W+ h$ in $S^*$ with respect to $L^*+(h-\vartheta)$,
we have
    \[
    \emptyset \neq \left(\bigcap_{s \in F : a_s = 1} (W + h - s^*) \setminus
        \bigcup_{s\in F : a_s = 0} (W + h - s^*) \right) \cap (L^*+ h -\vartheta)
     \ .
    \]
    Thus there exists an
    \begin{equation}\label{e.limitzero}
\bar m^*\in (L^*+ h -\vartheta)
    \end{equation}
such that
\begin{equation} \label{e.loc_ind_1}
     \bar m^*\in W+ h -s^* \ \equi \
    a_s=1 \
  \end{equation}
  for all $ s\in F$. Further, by the symmetry $L^*=-L^*$ we have
\begin{equation}\label{e:limit}
\bar m^*=h -\vartheta-m^*
\end{equation}
   for some $m\in L$. Combining this with \eqref{e.loc_ind_1}
   we obtain
 \begin{equation}\label{eq:contain}
      s \in \Oplam(W+\vartheta) + m \quad \textrm{ if and only if } \quad a_s=1 \ .
  \end{equation}
Moreover, we have
  $$\Gamma:= \Oplam(W+\vartheta) + m = \Oplam (W + \vartheta +  m^*) \stackrel{\eqref{e:limit}}{=}
  \Oplam (W + h - \bar m^*) \ssq U,$$ where we used that $\bar m^*$ belongs to
  $W+h-S^*$ by (\ref{e.loc_ind_1}) to obtain the last inclusion.  Thus
  $\Gamma$ belongs to $\Omega(\Oplam(W+\vartheta))$ with $\Gamma\ssq U$, and
  due to \eqref{eq:contain} we have
$$s\in
\Gamma \quad \textrm{ if and only if } \quad a_s=1 \ .$$
This finishes the proof.
  \end{proof}

  A slightly more specific notion of independence is given in the following definition.
  It will be needed in particular to obtain further information
about  embedded fullshifts in the case of proper model sets.

\begin{mydef}[Local independence in $0\in H$] Let $\DDD\subset
H$ be given.   An infinite family $(A_s)_{s\in P}$ of subsets of $H$
is said to be
  \textit{locally independent in $0\in H$} with respect to $\DDD$ if
$$ 0 \ \in \ \closure\left(\left(\bigcap_{s\in F: a_s=1} A_s\smin \bigcup_{s\in
    F:a_s=0} A_s\right) \cap D\right)
$$
for any finite subset $F\ssq P$ and any $a\in\{0,1\}^F$.  The window $W$ is
said to be \textit{locally independent in $P\ssq L$ with respect to $\DDD$} if
the the family $W-p$, $p\in P$, is locally independent in $0\in H$ with respect
to $\DDD$. Hence, the window $W$ is {locally independent in $P$ with respect to
  $\DDD$} if and only if
\begin{equation}\label{e.local_independence}
0\ \in\ \closure\left( \left( \bigcap_{s\in F : a_s = 1} (W-s^*) \setminus
   \bigcup_{s\in F         : a_s = 0} (W-s^*) \right) \cap \DDD \right)
\end{equation}
for any finite family $F\ssq P$ and any $a\in\{0,1\}^F$.
\end{mydef}

\begin{corollary}
   \label{cor:localIndependenceWithRespectTo1}
Let $(\R^N,H,\cL)$ be a  CPS, $W\ssq H$ relatively  compact, proper
and $\vartheta,h\in    H$.  Assume that $\Oplam(W+h)$ possesses a
subset $S$ of positive asymptotic density such that
    $W+h$ is locally independent  in
     $S^*=\{s^* : s\in S\}$ with respect to
$L^*+(h-\vartheta)$.  Let  $\beta_\vartheta$ denote the flow
morphism    described in \eqref{r.flow_morphism}. Then there is an
embedded fullshift contained in $\beta_\vartheta^{-1}([0, h
-\vartheta]_\cL)$.
  \end{corollary}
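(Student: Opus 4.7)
My strategy is to imitate the proof of Lemma~\ref{lem:localIndependenceWithRespectTo1} while exploiting the stronger local independence hypothesis to pin the translates we construct into the specific fibre $\beta_\vartheta^{-1}([0,h-\vartheta]_\cL)$. I take $S$ as the set of free points and use as grid $U:=\Oplam(W+h)$, which is uniformly discrete because $W+h$ is relatively compact and which, by \eqref{eq:DDSFlowMorphism}, automatically contains every element of the prescribed fibre. Thus placing $\Xi$ inside the fibre will immediately take care of the grid requirement.

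Fix a finite $F\ssq S$ and $a\in\{0,1\}^F$. Local independence supplies a sequence $\bar m_n^*\in L^*+(h-\vartheta)$ with $\bar m_n^*\to 0$ in $H$ and $\bar m_n^*\in W+h-s^*\Leftrightarrow a_s=1$ for every $s\in F$. Writing $\bar m_n^*=h-\vartheta-m_n^*$ with $m_n\in L$, the computation carried out in the proof of Lemma~\ref{lem:localIndependenceWithRespectTo1} will show that $\Gamma_n:=\Oplam(W+\vartheta)+m_n=\Oplam((W+\vartheta)+m_n^*)$ respects $a$ on $F$, and the convergence $\bar m_n^*\to 0$ translates into $m_n^*\to h-\vartheta$.

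Extracting a convergent subsequence of $(\Gamma_n)$ in the compact hull (Lemma~\ref{lem:DeloneMetric}) produces a limit $\Gamma_{F,a}\in\Omega(\Oplam(W+\vartheta))$. Applying Lemma~\ref{lem:subshiftConvergence} with window $W+\vartheta$ and using $m_n^*\to h-\vartheta$ will place $\Gamma_{F,a}$ inside $\beta_\vartheta^{-1}([0,h-\vartheta]_\cL)$. Since $\Oplam(W+\vartheta)$ is Meyer and hence has finite local complexity, convergence in the hull forces $\Gamma_n$ and $\Gamma_{F,a}$ to coincide on any fixed bounded region for $n$ large, so the condition $s\in\Gamma_{F,a}\Leftrightarrow a_s=1$ is preserved on $F$.

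The remaining and most delicate step will be to pass from these finite, ``$(F,a)$-exact'' configurations $\Gamma_{F,a}$ to an honest embedded fullshift realising every subset $S'\ssq S$. I plan to define $\Xi$ as the closure inside $\Omega(\Oplam(W+\vartheta))$ of the collection $\{\Gamma_{F,a}\}$; continuity of $\beta_\vartheta$ keeps $\Xi$ inside the fibre, hence inside $U$. Given an arbitrary $S'\ssq S$, I would exhaust $S$ by finite sets $F_k\uparrow S$, set $a^{(k)}=\mathbf{1}_{S'\cap F_k}$, and take any cluster point $\Gamma\in\Xi$ of $(\Gamma_{F_k,a^{(k)}})$; finite local complexity again propagates the matching on each $F_k$ to the limit and gives $\Gamma\cap S=S'$. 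This FLC-based diagonal argument is the step that will require most care, the rest of the proof being essentially a transport of the estimates of Lemma~\ref{lem:localIndependenceWithRespectTo1} into the correct fibre by means of Lemma~\ref{lem:subshiftConvergence}.
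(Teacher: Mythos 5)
Your overall route is the paper's own: for each finite $F\ssq S$ and $a\in\{0,1\}^F$ you produce $\bar m_n^*\in L^*+(h-\vartheta)$ tending to $0$ with the prescribed membership pattern, pass to $\Gamma_n=\Oplam(W+\vartheta)+m_n=\Oplam(W+\vartheta+m_n^*)$, take a limit in the compact hull, and identify the fibre (the paper computes $\beta_\vartheta(\Gamma)$ directly from continuity of the flow morphism, you invoke Lemma~\ref{lem:subshiftConvergence}; these are equivalent, as that lemma's (ii)$\Rightarrow$(i) direction is itself just continuity of $\beta$). Your choice of grid $U=\Oplam(W+h)$ for the final fullshift is legitimate, since by \eqref{eq:DDSFlowMorphism} every element of the fibre $\beta_\vartheta^{-1}([0,h-\vartheta]_\cL)$ is contained in it (compare Remark~\ref{rem:es}(d)), and your closing diagonal/closure step is sound for the same reason: all the $\Gamma_{F,a}$ lie in the fibre, hence in the uniformly discrete set $\Oplam(W+h)$ which also contains $S$, so membership and non-membership of points of $S$ do pass to cluster points there.

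The one step that does not hold as justified is the transfer of the pattern from the $\Gamma_n$ to the limit $\Gamma_{F,a}$: you argue that since $\Oplam(W+\vartheta)$ is Meyer, hence FLC, convergence in the hull forces eventual coincidence on bounded regions. FLC of a single set does not imply this for a convergent sequence of its translates: $\Z+1/n\to\Z$ in the hull, yet $0$ lies in the limit and in no approximant, so the condition ``$s\notin\Gamma_n$ for all $n$'' need not survive the limit. The anchor-point argument of Proposition~\ref{prop:eineroderallepunkte} requires a point common to all $\Gamma_n$ and to $\Gamma$, which you do not have (e.g.\ when $a_s=0$ for every $s\in F$). Note also that your grid $\Oplam(W+h)$ cannot be used to repair this, because the approximants $\Gamma_n=\Oplam(W+h-\bar m_n^*)$ are not contained in it. The correct and easy fix is exactly what the paper does: fix a relatively compact neighbourhood $V$ of $0$ with $\bar m_n^*\in V$ for all $n$; then $\Gamma_n\ssq\Oplam(W+h-V)=:U_1$ for all $n$, $F\ssq\Oplam(W+h)\ssq U_1$, and $U_1$ is uniformly discrete by Lemma~\ref{lem:CPSMeyer}, which is precisely the separation needed to conclude $s\in\Gamma_{F,a}\Leftrightarrow a_s=1$ in the limit. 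With that single replacement your argument is complete and coincides with the paper's proof.
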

  \begin{proof}
  This follows by extending the proof of the previous lemma. Here are the
  details: Fix a finite subfamily $F\ssq S$ of and $a \in \{0,1\}^S$. Then, due
  to (\ref{e.local_independence}) we can choose $\bar m^*$ such that it
  satisfies~(\ref{e.limitzero}) and (\ref{e.loc_ind_1}) and additionally require
  that $\bar m^*$ is arbitrarily close to $0$. This means that we can find a
  sequence of $\bar m_j^*$ such that (\ref{e.limitzero}) and (\ref{e.loc_ind_1})
  hold for all $j\in\N$ and at the same time
    \begin{equation}
\label{e.convergence} \jLim \bar m^*_j \ =\ 0 \ .
  \end{equation}
  Further, we can fix a relatively compact neighbourhood $V$ of $0$ and assume
  without loss of generality that $\bar m^*_j\in V$ for all $j\in\N$.

    If now $m_j\in L$ are chosen such that $\bar m_j^*=h-\vartheta-m_j*$,
    analogous to (\ref{e:limit}), then we obtain
    \begin{equation} \label{e:equivv}
      s \in  \Gamma_j  :=  \Oplam(W+\vartheta)+m_pj \quad \textrm{if and only if}  \quad a_j= 1 \ .
    \end{equation}
Moreover, we have
\[
\Gamma_j\ = \ \Oplam(W+\vartheta+m_j^*) \ = \ \Oplam(W+h-\bar m_j^*) \ \ssq \ \Oplam(W+h-V) \ =: \ U_1 \ ,
\]
where $U_1$ is discrete since $W+h-V$ is relatively compact (compare
Lemma~\ref{lem:CPSMeyer}).

Now, $(\Gamma_j)$ is a sequence in the compact space
$\Omega(\Oplam(W+\vartheta))$. Hence, it possesses an accumulation point $\Gamma
\in \Omega(\Oplam(W+\vartheta))$. As $\Gamma_j$ is a subset of $U_1$ and $U_1$
is uniformly discrete, convergence of the $\Gamma_j \to \Gamma$ and
\eqref{e:equivv} yield
$$s\in
\Gamma \quad \textrm{ if and only if } \quad a_s=1.$$

As $W$ is proper,  $\beta_\vartheta$ is continuous. This gives
  \begin{eqnarray*}
    \beta_\vartheta(\Gamma) & = & \jLim \beta_\vartheta(\Oplam(W+\vartheta) + m_j)\\
    &=& \jLim \beta_\vartheta(\Oplam(W+\vartheta + m_j^*)\\
    & = &  \jLim [0,m_j^*]_\cL \\
(\mbox{by } \eqref{e:limit})\;\:     &  = & \ \jLim [0,h-\vartheta-\bar m_j^*]_\cL \\
(\mbox{by } \eqref{e.convergence}) \;\:  &=&   [0,h-\vartheta]_\cL.
  \end{eqnarray*}
This shows that the $\Gamma$ constructed above are all contained in
the fibre
  $\beta_\vartheta^{-1}([0,h-\vartheta]_\cL)$. Thus, we obtain an
  embedded fullshift in that fibre.
\end{proof}

  \Paragraph{ Local topological independence and proper $W$.} \label{LocalTopInd}
  In this section we consider the case that $W$ is proper. We provide a
  sufficient condition for applicability of Corollary
  \ref{cor:localIndependenceWithRespectTo1}. This condition is given in Lemma
  \ref{lem:localTopologicalIndependence}. Our application to the random model in
  Theorem~\ref{t.random_windows} will be based on that lemma.

\smallskip

  We say a finite family of sets $A_s$, $s\in F$, of subsets of $H$ is
  \emph{locally topologically independent in $0\in H$} if for all $a \in \{0,1 \}^F$
  we have
   $$ 0 \in \closure \left( \interior \left( \bigcap_{s\in F : a_s = 1}
   A_s
       \setminus \bigcup_{s\in F : a_s = 0} A_s \right) \right) .$$ An infinite
   family of sets is called \emph{locally topologically independent in $0$} if
   the condition above holds for each finite subfamily. A window $W$ is
   \emph{locally topologically independent in} $P\ssq L^*$, if the family
   $W-p, p\in P,$ is locally topologically independent in $0$.

  \begin{lemma}
   Any family  of subsets of $H$, which is  locally topologically independent in $0\in H$,
   is locally independent in $0$ with respect to any dense  $\DDD\ssq H$.
  \end{lemma}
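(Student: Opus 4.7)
The plan is to show directly that the topological condition implies the density condition by inserting the open set $\interior(B)$ between them, where $B$ denotes the set appearing inside the closure in both definitions. The key observation will be the elementary topological fact that a dense subset of a space meets every non-empty open set in a dense subset of that open set.

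First I would fix a finite subfamily $(A_s)_{s\in F}$ and an arbitrary $a\in\{0,1\}^F$, and abbreviate
\[
B \ := \ \bigcap_{s\in F: a_s=1} A_s \ \smin \ \bigcup_{s\in F: a_s=0} A_s.
\]
The hypothesis of local topological independence then reads $0\in\closure(\interior(B))$, while the conclusion to be established reads $0\in\closure(B\cap \DDD)$.

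Next I would exploit the obvious inclusion $\interior(B)\cap \DDD \ssq B\cap \DDD$, which reduces the task to showing $0\in\closure(\interior(B)\cap \DDD)$. For this, I would invoke that $\DDD$ is dense in $H$: for any non-empty open set $U\ssq H$, the intersection $U\cap \DDD$ is dense in $U$, i.e.\ $U\ssq\closure(U\cap\DDD)$. Applying this with $U=\interior(B)$ and taking closures gives
\[
\closure(\interior(B)) \ \ssq \ \closure(\interior(B)\cap\DDD) \ \ssq \ \closure(B\cap\DDD).
\]
Since $0$ lies in the leftmost set by assumption, it lies in the rightmost set as well, which is exactly the required conclusion.

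The argument is entirely soft; there is no real obstacle to overcome, only a routine verification that closure commutes with the inclusions in the expected way. The one point worth spelling out carefully is the density statement $U\ssq\closure(U\cap\DDD)$ for open $U$, which follows immediately by taking, for any $x\in U$ and any neighbourhood $V$ of $x$, a point of $\DDD$ in the non-empty open set $U\cap V$.
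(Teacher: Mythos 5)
Your proof is correct and follows essentially the same route as the paper: both arguments reduce the claim to the observation that, by density of $\DDD$, the set $\interior(B)\cap\DDD$ is dense in $\interior(B)$, so that $0\in\closure(\interior(B))\ssq\closure(\interior(B)\cap\DDD)\ssq\closure(B\cap\DDD)$. The only cosmetic difference is that the paper phrases the conclusion via a sequence $h_j\to 0$ in $\interior(B)\cap\DDD$ (using metrizability of $H$), whereas you state it purely in terms of closures, which is if anything slightly cleaner.
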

  \begin{proof}
    Consider an arbitrary finite subfamily  $A_s$, $s\in F$, of the original family
    and let $a\in \{0,1\}^F$ be given. Define
    $$A(a) = \bigcap_{s\in F : a_s=1} A_s \setminus \bigcup_{s: a_s =0} A_s.$$
    By assumption we have $0\in \closure(\interior(A(a)))$. Since $\DDD$ is
    dense in $H$, the intersection $\interior(A(a)) \cap \DDD$ is dense in
    $\interior(A(a))$. Thus, we can choose a sequence $\jfolge{h_j}$ in
    $\interior(A(a)) \cap \DDD $ such that $\lim_{j \rightarrow \infty} h_j =
    0$.
  \end{proof}

  \begin{lemma}[Topological criterion for embedded fullshifts]
     \label{lem:localTopologicalIndependence}
   Let $(\R^N,H,\cL)$ be a  CPS, $W\ssq H$ a proper window and $h\in
   H$. Assume that there exists a subset   $S$ of $\Oplam(W+h)$ of positive asymptotic
   density such that $W+h$  is locally
   topologically independent in $S^*= \{s^*  : s\in S\}$.
Then, the fibre $\beta_\vartheta^{-1}([0,h-\vartheta]_\cL)$ contains
   an embedded fullshift    for every $\vartheta \in \R$.
  \end{lemma}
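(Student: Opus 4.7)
The plan is to reduce the statement to a direct application of Corollary \ref{cor:localIndependenceWithRespectTo1}, using the preceding lemma (local topological independence in $0\in H$ implies local independence in $0$ with respect to any dense $\DDD\ssq H$) as the bridge between the two notions.

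First I would fix an arbitrary $\vartheta\in H$ (the statement in the lemma writes $\R$ but the relevant parameter lives in $H$) and observe that $L^* + (h-\vartheta)$ is a translate of $L^*$, and hence dense in $H$. This is because density of $L^*$ in $H$ is part of the defining property of the CPS and is invariant under translation of the ambient group.

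Next I would invoke the preceding lemma with $\DDD := L^* + (h-\vartheta)$. Applied to the family $(W+h) - s^*$, $s\in S$, which is by assumption locally topologically independent in $0\in H$, it yields that this family is also locally independent in $0$ with respect to $L^* + (h-\vartheta)$. Equivalently, $W+h$ is locally independent in $S^*$ with respect to $L^* + (h-\vartheta)$. Combined with the hypothesis that $S\ssq \Oplam(W+h)$ has positive asymptotic density, this provides exactly the input required by Corollary \ref{cor:localIndependenceWithRespectTo1}.

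Applying that corollary then produces an embedded fullshift contained in the fibre $\beta_\vartheta^{-1}([0, h-\vartheta]_\cL)$, which is the desired conclusion. Since $W$ is proper, it is in particular relatively compact, so the ambient hypotheses of the corollary (properness and relative compactness of the window) are met automatically. The only real obstacle is bookkeeping: one must keep track of how the shift $h-\vartheta$ enters both as a translate of $L^*$ in the density argument and as the label of the targeted fibre of $\beta_\vartheta$, but these line up because of the way $\beta_\vartheta$ is defined via the translated window $W+\vartheta$ in \eqref{r.flow_morphism}.
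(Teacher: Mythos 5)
Your proposal is correct and follows exactly the route the paper takes: apply the preceding lemma (topological independence implies independence with respect to any dense $\DDD$) with $\DDD = L^* + (h-\vartheta)$, which is dense as a translate of $L^*$, and then feed the resulting local independence together with the positive-density set $S$ into Corollary \ref{cor:localIndependenceWithRespectTo1}. Nothing is missing; the bookkeeping of the shift $h-\vartheta$ is handled the same way in the paper.
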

\begin{proof}
As $W+h$ is locally topologically independent in $S^*$ and $L^* + (h
-\vartheta)$ is dense in $\R^N$ for all $\vartheta \in H$, the
preceding lemma gives that  $W+h$ is locally    independent in $S^*$
with respect to $L^*+(h-\vartheta)$ for  all $\vartheta\in \R$. As
$W$ is proper, we can now apply Corollary
\ref{cor:localIndependenceWithRespectTo1} to obtain that the fibre
$\beta_\vartheta^{-1}([0,h-\vartheta]_\cL)$ contains   an embedded
fullshift.
\end{proof}

\Paragraph{ Metric independence and general $W$.}
  The aim of this section is to adapt the above concepts for the case of weak
  model sets, that is, to compact windows with empty interior. In this case, we
  need to replace open sets by sets of positive measure and invoke
  uniform distribution  in order to prove analogous statements. As a
  result we will obtain a criterion for embedded fullshifts for
  general relatively  compact $W$ with positive measure. This criterion is given in Lemma
  \ref{lem:localMetricIndependence}.

\medskip



\begin{mydef}
 A finite family $(A_s)_{s\in F}$ of subsets of $H$ is
\emph{ metrically independent} if for all $a \in \{0,1\}^F$ we have
   $$ 0< \left| \left(\bigcap_{s\in F : a_s = 1} A_s \setminus \bigcup_{s\in F :
  a_s = 0} A_s \right) \right|. $$ An infinite family of subsets of $H$ is
called \emph{metrically independent} if the condition above holds for each
finite subfamily. Further, we say the window $W$ is \emph{metrically independent
  in} $P\ssq L^*$, if the family $W -p, p \in P$, is metrically independent.
   \end{mydef}

   \begin{lemma}[Metric criterion for embedded fullshifts]
     \label{lem:localMetricIndependence} Let $(\R^N,H,\cL)$ be a CPS, $W\ssq H$
     a relatively compact window and $h\in H$. Assume that there exists a subset
     $S$ of $\Oplam(W+h)$ of positive asymptotic density such that
     $S^*=\{s^*:s\in S\}$ is relatively compact and $W+h$ is metrically
     independent in $S^*$.
    Then  $\Omega(\Oplam(W+\vartheta))$
    contains an embedded fullshift for almost every $\vartheta\in H$.
\end{lemma}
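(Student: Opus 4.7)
The plan is to reduce the claim to the already-proven Lemma \ref{lem:localIndependenceWithRespectTo1} by upgrading ``metric independence'' to ``independence with respect to $L^* + (h-\vartheta)$'' for almost every $\vartheta\in H$. The metric hypothesis provides positive Haar measure for every Boolean ``cylinder'' of translates $W+h-s^*$; Theorem \ref{t:uniform-distribution}(a) then converts positive measure of such a set into the existence of a dual lattice point inside it after a suitable translation, and this is exactly the conclusion we need.

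More concretely, I would proceed as follows. Since $S\ssq\Oplam(W+h)$ is uniformly discrete it is countable, so the collection of pairs $(F,a)$ with $F\ssq S$ finite and $a\in\{0,1\}^F$ is countable. For each such pair, define the measurable set
$$
A(F,a) \ := \ \bigcap_{s\in F:\, a_s=1}(W+h-s^*) \ \smin \ \bigcup_{s\in F:\, a_s=0}(W+h-s^*),
$$
which has strictly positive Haar measure by the metric-independence assumption. Applying Theorem \ref{t:uniform-distribution}(a) to the set $B_{F,a}:=A(F,a)-h$, there is a Haar-null set $N_{F,a}\ssq H$ such that for every $\vartheta\in H\smin N_{F,a}$ the density of $\Oplam(B_{F,a}+\vartheta)$ equals $|A(F,a)|/\vol(\cL)>0$, and in particular $\Oplam(B_{F,a}+\vartheta)\neq\emptyset$. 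Unwinding the definition of $\Oplam$, this non-emptiness says precisely that there is $\ell\in L$ with $\ell^*+h-\vartheta\in A(F,a)$, i.e.\ $A(F,a)\cap(L^*+h-\vartheta)\neq\emptyset$. Setting $N:=\bigcup_{(F,a)}N_{F,a}$, a countable union of null sets and hence still null, we obtain for every $\vartheta\in H\smin N$ that the family $(W+h-p)_{p\in S^*}$ is independent with respect to $L^*+(h-\vartheta)$. Lemma \ref{lem:localIndependenceWithRespectTo1} then yields an embedded fullshift in $\Omega(\Oplam(W+\vartheta))$.

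The only real subtlety is that the conclusion of Lemma \ref{lem:localIndependenceWithRespectTo1} requires a \emph{single} $\vartheta$ that simultaneously witnesses the independence condition for \emph{all} finite $F$ and all sign patterns $a$; this is exactly what forces the countability-plus-null-union argument above, and it is also what accounts for the ``almost every'' quantifier in the conclusion. Note that no topological or properness hypothesis on $W$ enters the argument — Theorem \ref{t:uniform-distribution}(a) does all the work of turning a positive-measure condition into the existence of lattice points — which is why this version, unlike Lemma \ref{lem:localTopologicalIndependence}, is applicable to windows with empty interior and thereby suited to the weak model set setting of Section \ref{section:WindowsWithEmptyInterior}.
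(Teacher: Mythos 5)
Your proposal is correct and follows essentially the same route as the paper: for each of the countably many pairs $(F,a)$ the positive measure of the Boolean combination $A(F,a)$ is converted via the uniform distribution theorem into non-emptiness of $A(F,a)\cap(L^*+h-\vartheta)$ for almost every $\vartheta$, and after discarding a countable union of null sets the conclusion follows from Lemma \ref{lem:localIndependenceWithRespectTo1}. The only cosmetic difference is that you take complements of null sets where the paper intersects full-measure sets.
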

\begin{proof}
Let  $F$ be a finite subset of $S$ and let $a\in \{0,1\}^F$ be
given. Consider the family  $W+h-s^*$, $s\in F$, and define
    $$\mathcal{W}(a) \ = \ \bigcap_{s\in F : a_s = 1} (W+h-s^*) \setminus
   \bigcup_{s\in F: a_s = 0} (W+h-s^*). $$
   Since $W + h $ is metrically
   independent in $S^*$, we have
    $$ 0 < |\mathcal{W}(a) |. $$
    By uniform distribution, Theorem \ref{t:uniform-distribution}, we thus
    obtain that the density of
$$\Oplam (\mathcal{W}(a)  -h   +  \vartheta)$$
is positive for almost every $\vartheta \in H$.
By  excluding a set of  measure zero, we therefore
   obtain a set $\Theta (a)  \ssq H$ of full measure such that for every $\vartheta\in \Theta (a)$
   the set $L^*+ h -\vartheta$ intersects $V\cap \cW(a)$.
Intersecting over  the countable family of all finite $F\ssq S$ and
$a\in \{0,1\}^F$ we obtain a set $\Theta \ssq H$ of full measure
such that for each $\vartheta\in\Theta  $ the set  $L^*+ h
-\vartheta$ intersects $V\cap \cW(a)$ for arbitrary $F\ssq  S$ and
$a\in \{0,1\}^F$. Hence,  $W+h$ is locally independent around $V$ in
$S^*$ with respect to $L^*+h-\vartheta$ for each $\vartheta
\in\Theta$. Given this,
Lemma~\ref{lem:localIndependenceWithRespectTo1} implies the
assertion.
  \end{proof}

 \section{Embedded fullshifts and unique ergodicity}
  \label{section:FurtherPropertiesOfDDSWithPositiveTopologicalEntropy}
In this section we study how existence of a embedded fullshifts of
sufficiently high density  prevents unique ergodicity.

\smallskip

Recall that we have defined the asymptotic density of a subset
$\Gamma$ of $\R^N$ by
$$\nu_\Gamma :=\limsup_{t\to\infty} \frac{ \sharp \Gamma\cap
F_t}{\lambda (F_t)}.$$

Let now $(\R^N,H,\cL)$ be a cut and project scheme and $W\ssq H$ be relatively
compact and $(\Omega(\Oplam (W)),\R^N)$ the associated dynamical system. If this
system is uniquely ergodic, then, by
Lemma~\ref{l.basic_dynamical_properties}(a), the density $\lim_{t\to\infty}
\frac{ \sharp \Gamma\cap F_t}{\lambda(F_t)}$ exists for every $\Gamma \in \Omega
(\Oplam (W))$ and is independent of $\Gamma$ (as this density is just the patch
frequency of the patch $(\{0\},r/2)$, where $r$ is the minimal distance between
points in $\Gamma$).
Based on this observation we can now show that
$(\Omega(\Oplam(W)),\R^N)$ can not be uniquely ergodic if it
contains an embedded fullshift with set of free points
  $S$ and grid $U$ such that $\nu_S$ is large compared to $\nu_U$.

  \begin{prop}
   \label{prop:UniqueErgodicityDependingOnRelativeDensity}
   Let $(\R^N,H,\cL)$ be a CPS and $W$ a relatively compact window and suppose
   that $\Omega(\Oplam(W))$ contains an embedded fullshift with set of free
   points $S$ and grid $U$ and $\nu_S > \nu_U / 2$.  Then
   $(\Omega(\Oplam(W)),\varphi)$ is not uniquely ergodic.  This applies in
   particular if $W$ is proper and $\Omega(\Oplam (W))$ contains a fullshift
   embedded in a fibre with asymptotic density $\nu_S > |W|/2\vol(\cL)$.
  \end{prop}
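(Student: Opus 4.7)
The plan is to argue by contradiction: assume $(\Omega(\Oplam(W)),\varphi)$ is uniquely ergodic and then use the embedded fullshift to produce two hull elements whose $C_t$-densities force incompatible estimates. By Lemma~\ref{l.basic_dynamical_properties}(a), unique ergodicity is equivalent to (UPF), which as noted in the paragraph preceding the proposition means that the density
\[
 d \ := \ \lim_{t\to\infty}\frac{\sharp\,\Gamma\cap C_t}{\lambda(C_t)}
\]
exists and takes the same value for every $\Gamma\in\Omega(\Oplam(W))$.

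Using the defining property of the embedded fullshift $(\Xi,S)$ (with grid $U=\bigcup_{\Gamma\in\Xi}\Gamma$), I would pick two extremal elements: $\Gamma_1\in\Xi$ with $\Gamma_1\cap S=S$, so that $S\ssq\Gamma_1\ssq U$, and $\Gamma_2\in\Xi$ with $\Gamma_2\cap S=\emptyset$, so that $\Gamma_2\ssq U\smin S$. The inclusion $S\ssq\Gamma_1$ gives $\sharp\Gamma_1\cap C_t\geq\sharp S\cap C_t$ for every $t$, and passing to the limsup yields the lower bound $d\geq\nu_S$.

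The step I expect to require slightly more care is the matching upper bound. From $\Gamma_2\ssq U\smin S$ one obtains
\[
 \frac{\sharp\Gamma_2\cap C_t}{\lambda(C_t)} + \frac{\sharp S\cap C_t}{\lambda(C_t)} \ \leq\ \frac{\sharp U\cap C_t}{\lambda(C_t)}
\]
for all $t$. Evaluating this along a subsequence $(t_n)$ realising the $\limsup$ defining $\nu_S$, the left-hand side converges to $d+\nu_S$ (since $\Gamma_2$ has a genuine density $d$), whereas the right-hand side has $\limsup$ bounded by $\nu_U$. This gives $d\leq \nu_U-\nu_S$. Combined with $d\geq \nu_S$ one gets $2\nu_S\leq \nu_U$, contradicting the hypothesis $\nu_S>\nu_U/2$.

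For the ``in particular'' assertion, suppose $W$ is proper and the embedded fullshift lives in a fibre $\beta^{-1}([s,h]_\cL)$. By Proposition~\ref{prop:DDSFlowMorphism} every element of that fibre is contained in $\Oplam(W+h)-s$, so $U\ssq\Oplam(W+h)-s$. Translation invariance of densities together with Theorem~\ref{t:uniform-distribution}(b) (applied to the compact window $W$) then gives $\nu_U\leq |W|/\vol(\cL)$, so the assumption $\nu_S>|W|/(2\vol(\cL))$ implies $\nu_S>\nu_U/2$ and the first part of the proposition applies.
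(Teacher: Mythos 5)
Your proposal is correct and follows essentially the same route as the paper: both arguments take the two extremal fullshift elements $\Gamma_1\supseteq S$ and $\Gamma_2\ssq U\smin S$, bound their densities by $\nu_S$ from below and $\nu_U-\nu_S$ from above respectively, and derive a contradiction with the single common density forced by unique ergodicity (the paper phrases this as a contradiction with (UPF), comparing a $\limsup$ exceeding $\nu_U/2$ with a $\liminf$ below $\nu_U/2$). The treatment of the ``in particular'' clause via $U\ssq\Oplam(W+h)-s$ and Theorem~\ref{t:uniform-distribution}(b) also matches the paper's proof.
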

  \begin{proof} Let $(\Xi,S)$ be the embedded fullshift in question.
Let $\Gamma_1, \Gamma_0 \in \Xi$ be given with $\Gamma_1 \cap S = S$
and $\Gamma_0 \cap S = \emptyset$. Then
   \[
\limsup_{t\to \infty} \frac{ \sharp \Gamma_1 \cap F_t
}{\lambda(F_t)} \geq \limsup_{t\to \infty} \frac{ \sharp S \cap F_t
}{\lambda(F_t)} = \nu_S > \frac{ \nu_U}{ 2}
   \]
    but at the same time
   \[
\liminf_{t\to \infty} \frac{\sharp \Gamma_0 \cap F_t}{\lambda(F_t)}
\leq \liminf_{t\to\infty} \frac{\sharp (U \setminus S) \cap
F_t}{\lambda (F_t)} \leq \nu_U - \nu_S  < \nu_U / 2.
 \]
  This contradicts the existence of uniform patch frequencies discussed above  and thus excludes
  unique ergodicity.

\smallskip

To show the statement for the case of a fullshift in a fibre, note
that for an embedded fullshift in a fibre the grid $U$ is contained
in $\Oplam (W + \vartheta) - s$ for some $\vartheta\in H$ and
$s\in\R^N$ (compare Remark \ref{rem:es} (d)). By uniform
distribution, Theorem \ref{t:uniform-distribution} (b) , we then
have $\nu_U \leq |W+\vartheta|/\vol(\cL) =|W|/\vol(\cL)$. Now, the
statement follows from the considerations in the first part of the
proof.
\end{proof}

 \section{Random windows and positive entropy}
  \label{section:ProbabilisticallyWindowsAndPositiveEntropy}
In this section we will provide a proof of the main theorem,
Theorem~\ref{t.random_windows}, presented in the introduction. In fact, we will
provide a strengthening of that result. Up to here our discussion involved
fairly general CPS. In this section, we will restrict attention to Euclidean
CPS. More specifically, we will consider the following situation (S):

\begin{itemize}
\item $(\R^N,\R,\cL)$ is a Euclidean CPS with $\cL=A(\Z^{N+1})$, where
  $A=\mathrm{GL}(N+1,\R)$ is such that $\pi_1:\R^N\times\R \to \R^N$ is
  injective on $\cL$ and $\pi_2:\R^N\times\R\to \R$ maps $\cL$ to a dense set
  $L^*=\pi_2(\cL)\ssq \R$.
\item $C\ssq \R$ is a Cantor set of positive measure in $[0,1]$.  Let
  $(G_n)_{n\in\N}$ a numbering of the bounded connected components in $\R \smin
  C$.
\end{itemize}
 We then  define for $\omega \in \Sigma^+=\{0,1\}^\N$ the
  set
  \begin{equation} \label{e.random_window}
  W(\omega) \ = \ C\cup \bigcup_{n:\omega_n=1} G_n.
  \end{equation}
Let $\Proj$ be the   Bernoulli distribution on $\Sigma^+$ with
probability $p\in (0,1)$ (i.e. $\Proj$ is the product measure
$\prod_{n\in \N} \mu$, where $\mu$ is the measure  on $\{0,1\}$
which assigns the value $p$ to $\{0\}$ and $1-p$ to $\{1\}$).

  \begin{lemma}\label{lem:windowproper}
    For $\mathbb{P}$-almost every $\omega \in \Sigma^+$, the window $W(\omega)$
    is proper.
  \end{lemma}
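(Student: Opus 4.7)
\textit{Proof plan.} The properness condition $\closure(\interior(W(\omega))) = W(\omega)$ will be analyzed by first identifying $\interior(W(\omega))$ explicitly. Since each $G_n$ is open and contained in $W(\omega)$ whenever $\omega_n=1$, we have $\bigcup_{n:\omega_n=1} G_n \ssq \interior(W(\omega))$. Conversely, $C$ is a Cantor set and hence nowhere dense, so no point of $C$ is interior to $W(\omega)$; moreover if $\omega_n=0$ then $G_n \cap W(\omega) = \emptyset$, so points of $G_n$ do not belong to $W(\omega)$ at all. This gives the equality $\interior(W(\omega)) = \bigcup_{n:\omega_n=1} G_n$ deterministically. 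Therefore, properness reduces to showing
\[
  C \ \ssq \ \closure\Bigl( \bigcup_{n:\omega_n=1} G_n \Bigr) \ ,
\]
since the reverse inclusion of $W(\omega) \supseteq \closure(\interior(W(\omega)))$ always holds for compact $W(\omega)$ (closedness of $W(\omega)$ follows because $C$ is closed and only finitely many of the $G_n$ fall outside any given neighbourhood of infinity --- but since all $G_n \ssq [0,1]$, compactness of $W(\omega)$ is immediate).

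The heart of the proof is a Borel--Cantelli argument. Let $\mathcal{B}$ denote the countable collection of pairs of rationals $(a,b)$ with $a<b$ for which the index set $N_{a,b} := \{n\in\N : G_n \ssq (a,b)\}$ is infinite. For each such $(a,b)$, independence and $\Proj(\omega_n = 1) = 1/2$ together with the second Borel--Cantelli lemma yield
\[
  \Proj\bigl(\{\omega : \omega_n = 1 \text{ for infinitely many } n \in N_{a,b}\}\bigr) \ = \ 1 \ .
\]
Intersecting these full-measure events over the countable family $\mathcal{B}$, we obtain a full-measure set $\Sigma^+_0 \ssq \Sigma^+$ on which, simultaneously for every $(a,b)\in\mathcal{B}$, infinitely many $n\in N_{a,b}$ satisfy $\omega_n=1$.

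It remains to verify that for every $\omega \in \Sigma^+_0$ and every $x\in C$, $\varepsilon>0$, some $G_n$ with $\omega_n=1$ meets $B_\varepsilon(x)$. Since $C$ is perfect, there exist at least two distinct points of $C$ in $(x-\varepsilon,x+\varepsilon)$, and since $C$ is totally disconnected, between any two such points there must be at least one component $G_m$ of $\R\smin C$; iterating this argument produces infinitely many indices $m$ with $G_m\ssq(x-\varepsilon,x+\varepsilon)$. Choose rationals $a,b$ with $x\in(a,b)\ssq B_\varepsilon(x)$ and sufficiently tight that at least two points of $C$ lie in $(a,b)$; the same argument shows $(a,b)\in\mathcal{B}$. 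By definition of $\Sigma^+_0$ there is some $n\in N_{a,b}$ with $\omega_n=1$, whence $G_n\ssq(a,b)\ssq B_\varepsilon(x)$ and any point of $G_n$ witnesses $x\in\closure(\interior(W(\omega)))$. This completes the proof.

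The only subtlety --- and what might be called the main obstacle --- is to make sure that the countable collection $\mathcal{B}$ is rich enough to handle every pair $(x,\varepsilon)$; the perfectness and nowhere-density of the Cantor set $C$ is precisely what guarantees that rational intervals tight around $x$ always contain infinitely many gaps $G_n$, so that the Borel--Cantelli event in $\mathcal{B}$ is non-vacuous near each point of $C$.
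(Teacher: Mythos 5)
Your proof is essentially correct and follows the same basic strategy as the paper's: a second Borel--Cantelli argument over a countable index family, followed by a perfectness/density argument to pass from that countable family to all of $C$. (The paper runs Borel--Cantelli at each point of a countable dense subset of $C$ rather than over rational intervals, and it establishes the stronger statement $\partial W(\omega)=C$ along the way, which requires the other half of the Borel--Cantelli dichotomy, namely that infinitely many gaps near each point of $C$ are \emph{excluded}. Your observation that for properness alone only the ``included infinitely often'' half is needed is a small but genuine streamlining.)

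One step of your write-up is wrong as stated, although it happens to be harmless. You claim that $\interior(W(\omega))=\bigcup_{n:\omega_n=1}G_n$ holds \emph{deterministically}, justified by ``$C$ is nowhere dense, so no point of $C$ is interior to $W(\omega)$''. Nowhere density of $C$ only says that $C$ has empty interior as a subset of $\R$; it does not prevent a point of $C$ from being an interior point of the larger set $W(\omega)$. Indeed, for $\omega\equiv 1$ one has $W(\omega)=[\inf C,\sup C]$, whose interior contains every point of $C$ except the two extremal ones. The correct identification of the interior is only an almost-sure statement, and proving it is exactly what the excluded-gaps half of Borel--Cantelli accomplishes in the paper's argument. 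Fortunately, your reduction of properness to $C\ssq\closure\bigl(\bigcup_{n:\omega_n=1}G_n\bigr)$ uses only the trivial inclusion $\bigcup_{n:\omega_n=1}G_n\ssq\interior(W(\omega))$ together with closedness of $W(\omega)$, so the conclusion stands; you should simply delete or weaken the unjustified claim about the reverse inclusion.
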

  \begin{proof}
    First, the complement of $W(\omega)$ in $\R$ consists of a union of
    connected components of $\R\smin C$ (those $G_n$ with $\omega_n=0$ and the
    two unbounded components in the complement of $C$). Since these are all
    open, $W(\omega)$ is compact. Further, as $\bigcup_{n:\omega_n=1}G_n$
    belongs to the interior of $W(\omega)$, we have
    $$\partial W(\omega)\ssq C.$$ Next, we are going to show the reverse
    inclusion (almost surely).  Suppose $x \in C$. Since $C$ is perfect, there
    exists a sequence of gaps $\{ G_{n_k} \}_{k \in \mathbb{N}}$ such that $\inf
    G_{n_k} \rightarrow x$ for $k \rightarrow \infty$.  By definition of the
    window, only intervals $G_{n_k}$ with $a_{n_k}(\omega)=1$ are included in
    $W(\omega)$. Since all random variables are independent, the
    Borel-Cantelli-Lemma implies 
    \[\begin{split}
      & \mathbb{P}(\{ \text{ for infinitely many } k,\ G_{n_k} \text{ is
      included in } W(\omega)\}) \ = \ 1 \ , \\ & \mathbb{P}(\{\text{ for
      infinitely many } k,\ G_{n_k} \text{ is not included in } W(\omega) \})
    \ = \ 1\ .
    \end{split}
    \]  Thus, for $\mathbb{P}$-almost every
    $\omega$ there exist subsequences $G_{n_{k_j}}\ssq W(\omega)$ and
    $G_{n_{k'_j}}\ssq H\smin W(\omega)$ such that $\lim_{j\to\infty} \inf
    G_{n_{k_j}} = \lim_{j\to\infty} \inf G_{n_{k'_j}} = x$. Hence, we have $x
    \in \partial W(\omega)$ $\mathbb{P}$-almost surely for every fixed $x\in C$.
    Now, let $M \ssq C$ be a countable and dense subset of $C$. Then for any $x
    \in M$ the argument above shows $x \in \partial W(\omega)$
    $\mathbb{P}$-almost surely. Hence, the countable set $M$ is contained in
    $\partial W(\omega)$ $\Proj$-almost surely. Consequently, we also have
    $$\closure(M)=C\ssq \partial(W(\omega))$$
     $\Proj$-almost surely. Together with the converse inclusion shown above,
    this yields $$\partial(W(\omega))=C$$
    $\Proj$-almost surely.
    From this we obtain
    $$\interior (W(\omega)) = W(\omega) \setminus \partial (W (\omega)) =
    \bigcup_{n:\omega_n=1}G_n$$
    $\Proj$-almost surely. Using this equality and going again  through the argument giving $C\ssq \partial
    W(\omega)$,
we then     find $\Proj$-almost surely
    $$C\ssq \partial (\interior (W(\omega)).$$
From this we then obtain
    $$\closure(\interior(W(\omega))) = \interior(W(\omega))
   \cup \partial (\interior( W(\omega)))\supset  \interior(W(\omega)) \cup C = W(\omega)$$
   and hence
   $$\closure(\interior(W(\omega))) = W(\omega)$$
   for $\mathbb{P}$-almost all $\omega \in X$.
  \end{proof}

  In the next step, we need to find a suitable $h\in \R$ and a respective subset
  $S$ of $\Oplam(C+h)$ of positive asymptotic density.  In order to avoid some
  technicalities later, it turns out convenient to work with $\tilde C=C\smin
  \left(\bigcup_{n\in\N} \partial G_n\cup\{\inf C,\sup C\}\right)$. Note that
  $|\tilde C|=|C|$, since the difference $C\smin \tilde C$ is just the countable
  set of endpoints of the intervals $G_n$ together with the two extremal points
  of $C$.

    \begin{lemma} \label{l.sequence}
       For Lebesgue-almost all $h\in\R$, the sequence $\Oplam ( \tilde C + h)$
       has asymptotic density given by $|C| / |\det A|$.
    \end{lemma}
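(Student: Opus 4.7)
The plan is to recognize this as a direct application of the uniform distribution theorem for model sets (Theorem~\ref{t:uniform-distribution}(a)) to the measurable window $\tilde C$. That theorem asserts that for any measurable $W\ssq H$ and Haar-almost every $\vartheta\in H$, the density
\[
\lim_{t\to\infty} \frac{\sharp \Oplam(W+\vartheta)\cap C_t}{\lambda(C_t)}
\]
exists as a genuine limit and equals $|W|/\vol(\cL)$. Since the asymptotic density is defined as a $\limsup$, existence of the limit automatically forces $\nu_{\Oplam(W+\vartheta)}$ to coincide with $|W|/\vol(\cL)$.

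To match the two sides of the claimed formula, two routine identifications are required. First, by its very definition $\tilde C = C\smin\left(\bigcup_{n\in\N}\partial G_n\cup\{\inf C,\sup C\}\right)$ differs from $C$ by a countable, hence Lebesgue-null, set, so $\tilde C$ is Borel measurable with $|\tilde C|=|C|$, as already observed just before the statement of the lemma. Second, in the present Euclidean setup $(\R^N,\R,\cL)$ with $\cL=A(\Z^{N+1})$, the image under $A$ of the unit cube $[0,1]^{N+1}$ is a fundamental domain of $(\R^N\times\R)/\cL$, so its Lebesgue volume satisfies $\vol(\cL)=|\det A|$, as already noted in the preliminaries.

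Combining these observations, an application of Theorem~\ref{t:uniform-distribution}(a) with $W=\tilde C$ and $H=\R$ (where Haar measure is Lebesgue measure) yields that, for Lebesgue-almost every $h\in\R$, the density of $\Oplam(\tilde C+h)$ exists and equals $|\tilde C|/\vol(\cL)=|C|/|\det A|$, which is precisely the desired statement. I do not anticipate any genuine obstacle here: the lemma is essentially a bookkeeping corollary of the already-established uniform distribution theorem, and the passage from $C$ to $\tilde C$ is harmless since only a null set is removed. The reason for working with $\tilde C$ rather than $C$ is presumably not relevant for the present lemma but will become useful in the subsequent steps, where one will want the selected points $s^*\in\tilde C+h$ to lie strictly inside a gap or strictly inside $C$, so as to feed them into the local independence criteria of Section~\ref{section:EmbeddedSubshiftsandTopologicalIndependence}.
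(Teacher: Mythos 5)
Your proof is correct and follows exactly the paper's own argument: the paper likewise deduces the lemma directly from Theorem~\ref{t:uniform-distribution}(a) applied to the measurable window $\tilde C$, using $|\tilde C|=|C|$ and $\vol(\cL)=|\det A|$. No gaps; your additional remark on why $\tilde C$ rather than $C$ is used is also consistent with the role it plays later.
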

    \begin{proof} This is a direct consequence of uniform
    distribution, Theorem \ref{t:uniform-distribution}. Note that in
    the case at hand the measure of a fundamental domain is just
    given by $|\det A|$.
      \end{proof}

  It remains to prove that the random window $W(\omega)$ is $\Proj$-almost
  surely locally topologically independent (as defined in
  Section~\ref{LocalTopInd}) in the sequence $L^*\cap (\tilde C+h)$.

  \begin{lemma}
   \label{lem:independenceInFiniteVariables}
Let $C$ be a Cantor set with positive measure and let $W(\omega)$ be
defined    as in (\ref{e.random_window}). Choose $h\in\R$.  Then for
$\mathbb{P}$-almost every $\omega$ the window $W(\omega)+h$ is
locally topologically independent in $ L^* \cap (\tilde C  +h)$.
  \end{lemma}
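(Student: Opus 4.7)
The plan is to reduce the claim to finding suitably many small geometric intervals and then apply Borel--Cantelli. Fix a finite subset $F\ssq L^*\cap(\tilde C+h)$ and $a\in\{0,1\}^F$; writing $c_s:=s-h$ for $s\in F$, the points $c_s$ are pairwise distinct elements of $\tilde C$, so none of them is an endpoint of any gap $G_n$ and none coincides with $\inf C$ or $\sup C$. Since $(W(\omega)+h)-s=W(\omega)-c_s$ and since $W(\omega)\cap G_n$ is either empty or all of $G_n$, the condition
\[
0\in\closure\!\left(\interior\!\left(\bigcap_{s:a_s=1}(W(\omega)-c_s)\smin\bigcup_{s:a_s=0}(W(\omega)-c_s)\right)\right)
\]
will follow once I exhibit, arbitrarily close to $0$, open intervals $I$ such that for each $s\in F$ the translate $c_s+I$ lies in a single bounded gap $G_{n_s(I)}$ and simultaneously $\omega_{n_s(I)}=a_s$ for every $s\in F$.

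For the deterministic geometric step I would use that $\bigcup_{s\in F}(C-c_s)$ is a finite union of translates of the nowhere dense set $C$, hence itself nowhere dense in $\R$; its complement is therefore dense and open. Every neighbourhood of $0$ consequently contains open intervals disjoint from $\bigcup_{s\in F}(C-c_s)$, and for any such interval $I$ the image $c_s+I$ is a connected subset of $\R\smin C$. If $I$ is short enough that $c_s+I\ssq(\inf C,\sup C)$ for all $s\in F$, which is possible because each $c_s$ lies in this open interval, then $c_s+I$ must sit inside a single bounded gap $G_{n_s(I)}$.

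The key step is to build a sequence $I_m$ of such intervals shrinking to $0$ whose associated index families $\{n_s(I_m):s\in F\}$ are pairwise disjoint in $m$. I would proceed inductively: at stage $m$, let $N_{m-1}$ be the finite set of indices used so far. Because each $c_s\in\tilde C$ lies outside every closed gap $\closure(G_n)$, one can pick $\eps_m<1/m$ small enough that $(c_s-\eps_m,c_s+\eps_m)\cap G_n=\emptyset$ for all $n\in N_{m-1}$ and all $s\in F$. Any open interval $I_m\ssq(0,\eps_m)$ disjoint from $\bigcup_{s\in F}(C-c_s)$, which exists by density, then automatically satisfies $n_s(I_m)\notin N_{m-1}$ for every $s\in F$, and we update $N_m:=N_{m-1}\cup\{n_s(I_m):s\in F\}$.

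With the sequence $(I_m)$ fixed deterministically, the events $A_m:=\{\omega:\omega_{n_s(I_m)}=a_s\text{ for all }s\in F\}$ are mutually independent under $\Proj$, each of probability $p^{\sharp\{s:a_s=0\}}(1-p)^{\sharp\{s:a_s=1\}}>0$. The second Borel--Cantelli lemma gives that $\Proj$-almost surely $A_m$ occurs for infinitely many $m$, and for each such $m$ the open interval $I_m$ sits inside $\bigcap_{s:a_s=1}(W(\omega)-c_s)\smin\bigcup_{s:a_s=0}(W(\omega)-c_s)$, placing $0$ in the closure of the interior of this set. Since $L^*$ is countable, there are only countably many pairs $(F,a)$ to consider, so a countable intersection of full-measure events yields the lemma. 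The main difficulty lies in the inductive construction of the $I_m$: one must simultaneously ensure existence of good geometric intervals arbitrarily close to $0$ \emph{and} force their gap indices to avoid a prescribed finite set, both of which hinge on the defining property that $c_s\in\tilde C$ is neither a gap endpoint nor an extremal point of $C$.
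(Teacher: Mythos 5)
Your proof is correct and follows essentially the same route as the paper's: both arguments locate, arbitrarily close to $0$, a small open interval that simultaneously sits inside a gap of each translated Cantor set $C-c_s$, force the gap indices used at different stages to be disjoint so that the relevant coordinates of $\omega$ form an independent family, apply the second Borel--Cantelli lemma, and finish by intersecting over the countably many pairs $(F,a)$. The one point you should make explicit is that for a fixed stage $m$ the indices $n_s(I_m)$, $s\in F$, must also be pairwise distinct (otherwise $A_m$ could be empty when $a_s\neq a_{s'}$); the paper enforces this from the start by taking $\delta_1=\frac{1}{2}\min_{x\neq y\in F}|x-y|$, and in your construction it holds automatically as soon as $\eps_m<\min_{s\neq s'}|c_s-c_{s'}|$, i.e.\ for all but finitely many $m$, which still suffices for Borel--Cantelli.
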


  \begin{proof} Let $F$ be an arbitrary finite subset of $L^* \cap
    (\tilde C + h)$.
Let
$$ \delta_1\ =\ \frac{1}{2} \cdot \min_{x\neq y\in F} |x-y| \ . $$ Since any
Cantor set is nowhere dense and perfect, there exist gaps $I_1^x\ssq
(0,\delta_1)$ of $C+h-x$, $x\in F$, such that
   \[
    \bigcap_{x\in F} I_1^x \ \neq \ \emptyset\ .
   \]
   By the choice of $\delta_1$, we have $I^x_1+x\neq I^{y}_1+y$ if $x\neq y\in
   F$.  Further, if we let $\delta_2 = \min \{1, \min_{x \in F } \left(\inf
   I_1^x\right) \}$, then by the same argument there exist gaps $I_2^x\ssq
   (0,\delta_2)$ of $C+h-x$ such that
   \[
   \bigcap_{x \in F} I_2^x \ \neq\ \emptyset\ 
   \]
   and $I^x_2+x\neq I^y_2+y$ for $x\neq y\in F$. Proceeding inductively with
   this construction, in the $(n+1)$-st step we define
  \begin{equation*}
   \label{eq:delta}
   \delta_{n+1} = \min \left\{ \frac{1}{n}, \min_{x \in F} \left(\inf I_n^x\right) \right\}
  \end{equation*}
  and choose gaps $I^x_{n+1}\ssq (0,\delta_{n+1})$ of $C+h-x$ such that
  \[
   \bigcap_{i \in \mathcal{C}} I^x_{n+1} \ \neq \ \emptyset \ 
  \]
  and $I^x_{n+1}+x\neq I^y_{n+1}+y$ whenever $x\neq y\in F$. Now, let
  $\nfolge{G_n}$ be a labeling of all gaps of $C+h$. Then by construction, we
  have $I^x_j=G_{n^x_j}-x$ for some $n^x_j\in\N$. Moreover, the choice of the
  $\delta_n$ and $I^x_n\ssq (0,\delta_n)$ ensures that $n^x_j\neq n^{x'}_{j'}$
  if $(x,j) \neq (x',j')$. In particular, this means that
  $(\omega_{n^x_j})^{x\in F}_{j\in\N}$ is a two-parameter family of identically
  distributed independent random variables. Therefore, we obtain that for any
  $a\in\{0,1\}^F$ the set
 \[
   \Omega(a) \ = \ \{ \omega\in\Sigma^+\mid \exists \textrm{ infinitely many }
   j\in\N:\ \omega_{n^x_j}=1 \textrm{ iff } a_x=1  \}
 \]
 has full measure $\Proj(\Omega(a))=1$. However, for all $\omega\in\Omega(a)$,
 we have that
 \[
     I_j\ = \ \bigcap_{x\in F} I^x_j \ \ssq \ \left(\bigcap_{x\in F:
       a_x=1} W(\omega)+h-x\right) \smin \left(\bigcup_{x\in F: a_x =
       0}
     W(\omega)+h-x \right) \ .
\]
Since the intervals $I_j$ are all open and $\lim_{j\to\infty}\inf I_j=0$, this
shows the local topological independence of $W(\omega)$ in $F$. As this works
for any finite subfamily $F$ of $L^*\cap (\tilde C+h)$ and there exist only
countably many such subfamilies, we obtain local topological independence of
$W(\omega)+h$ in $L^*\cap (\tilde C+h) $ for $\Proj$-almost every $\omega\in\Sigma^+$.
 \end{proof}




We can now summarize the preceding considerations in the following
theorem.

\begin{theorem} \label{t:random_window-extended}
Assume the situation (S) described at the beginning of this section.
Then, there exists a subset $\Sigma_0^+$ of $\Sigma^+$ of full
$\Proj$-measure such that the following holds:

\begin{enumerate}

\item[(a)] For all $\omega \in \Sigma_0^+$ and $\vartheta \in\R$ there exists a
  set $\Xi(\omega)\ssq \T$ of full measure such that the Delone dynamical
  system $(\Omega(\Oplam(W(\omega)+\vartheta)),\R)$ contains an embedded
  fullshift in $\beta_{\vartheta}^{-1}(\xi)$ for every $\xi \in \Xi(\omega)$.

\item[(b)] For all $\omega \in \Sigma_0^+$ and
$\vartheta \in\R$ the Delone dynamical system
$(\Omega(\Oplam(W(\omega)+\vartheta)),\R)$
 has positive topological entropy $\htop (\varphi) = \frac{|C|
\log 2}{|\det A|}$.

\item[(c)]  For every $\omega \in \Sigma_0^+$  there exists a residual set
 $\Theta$ in $\R$ such that the Delone dynamical system
$(\Omega(\Oplam(W(\omega)+\vartheta)),\R)$ is minimal for every
$\vartheta \in \Theta$.

\item[(d)] For every $\omega \in \Sigma_0^+$ and every $\vartheta \in\R$ the
  Delone dynamical system $(\Omega(\Oplam(W(\omega)+\vartheta)),\R)$ is not
  uniquely ergodic provided $C$ additionally satisfies $|C|> 1/2$.
\end{enumerate}
\end{theorem}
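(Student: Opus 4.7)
The four assertions follow by assembling the technical lemmas just proved (Lemmas \ref{lem:windowproper}, \ref{l.sequence}, \ref{lem:independenceInFiniteVariables}) with the general machinery of Sections \ref{section:EmbeddedSubshiftsandTopologicalIndependence} and \ref{section:FurtherPropertiesOfDDSWithPositiveTopologicalEntropy}. The plan is to first define $\Sigma_0^+$ as the intersection of three $\Proj$-full-measure sets: the set from Lemma \ref{lem:windowproper} on which $W(\omega)$ is proper with $\partial W(\omega)=C$; and, via Fubini applied to Lemma \ref{lem:independenceInFiniteVariables} (which is stated at fixed $h$), the set of $\omega$ for which
\[
  H(\omega) \ = \ \bigl\{h\in\R : W(\omega)+h \text{ is locally topologically independent in } L^*\cap(\tilde C + h)\bigr\}
\]
has full Lebesgue measure. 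Intersecting with the full-measure set from Lemma \ref{l.sequence} inside $H(\omega)$, we obtain a full-measure subset $\widetilde H(\omega)\ssq H(\omega)$ on which in addition $\Oplam(\tilde C+h)$ has asymptotic density $|C|/|\det A|$.

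For part (a), fix $\omega\in\Sigma_0^+$, $\vartheta\in\R$ and $h\in\widetilde H(\omega)$; setting $S:=\Oplam(\tilde C+h)\ssq \Oplam(W(\omega)+h)$, the two properties just listed are exactly the hypotheses of Lemma \ref{lem:localTopologicalIndependence}, yielding an embedded fullshift of asymptotic density $|C|/|\det A|$ inside the fibre $\beta_\vartheta^{-1}([0,h-\vartheta]_\cL)$. To upgrade this to a full-measure subset of $\T$, I would invoke the flow equivariance $\beta_\vartheta\circ\varphi_s = \omega_s\circ\beta_\vartheta$: translating an embedded fullshift in $\beta_\vartheta^{-1}(\xi)$ by $\varphi_s$ produces an embedded fullshift in $\beta_\vartheta^{-1}(\xi+[s,0]_\cL)$. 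Since $\T=(\R^N\times\R)/\cL$ and $\widetilde H(\omega)$ is full measure in $\R$, the union over $s\in\R^N$ of the resulting fibres constitutes a set of full Haar measure in $\T$ by Fubini on a fundamental domain of $\cL$; this is the sought $\Xi(\omega)$.

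For part (b), the lower bound $\htop(\varphi)\geq |C|\log 2 /|\det A|$ follows from (a) and Lemma \ref{lem:esimpliespositiveh}, while the matching upper bound is furnished by the entropy estimate of \cite{HR}, applied with $|\partial W(\omega)|=|C|$ and $\vol(\cL)=|\det A|$. Part (c) is general theory: since $\partial W(\omega)=C$ is nowhere dense, the countable union $L^*-C=\bigcup_{l\in L}(l^*-C)$ is meager, so its complement $\Theta\ssq\R$ is residual; for $\vartheta\in\Theta$ the translate $W(\omega)+\vartheta$ is generic (Lemma \ref{lem:properandgeneric}), hence $\Oplam(W(\omega)+\vartheta)$ is repetitive (Lemma \ref{lem:genericrepetitivity}(b)), and minimality is Lemma \ref{l.basic_dynamical_properties}(b). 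Part (d) reduces to Proposition \ref{prop:UniqueErgodicityDependingOnRelativeDensity}: the embedded fullshift of (a) lies in a single fibre, and since $W(\omega)\ssq[0,1]$ gives $|W(\omega)|\leq 1$, the hypothesis $|C|>1/2$ yields $\nu_S=|C|/|\det A|>1/(2|\det A|)\geq |W(\omega)|/(2\vol(\cL))$, so the proposition applies.

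The main delicacy is the Fubini step upgrading the almost-sure independence from fixed $h$ to a full-measure set of $h$ for $\Proj$-almost every $\omega$, together with the subsequent $\R^N$-averaging that promotes a ``codimension-one slice'' of good fibres in $\T$ to a genuinely full-measure subset. All remaining ingredients amount to direct applications of the lemmas from the preceding sections.
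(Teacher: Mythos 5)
Your proposal is correct and follows essentially the same route as the paper's own proof: the same definition of $\Sigma_0^+$ via Lemma \ref{lem:windowproper} and a Fubini argument on Lemma \ref{lem:independenceInFiniteVariables}, the same application of Lemmas \ref{l.sequence} and \ref{lem:localTopologicalIndependence} followed by translation-invariance of fibre fullshifts and projection to $\T$ for (a), and the identical deductions of (b), (c) and (d) from \cite{HR}, the genericity/repetitivity lemmas, and Proposition \ref{prop:UniqueErgodicityDependingOnRelativeDensity}. The extra detail you supply (the explicit Baire-category description of $\Theta$ and the Fubini step on a fundamental domain) only makes explicit what the paper leaves implicit.
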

\begin{proof} By Lemma \ref{lem:windowproper} there exists a set $\Sigma^+_1$ of full measure in
$\Sigma^+$ such that  $W(\omega)$ is proper for  every
$\omega\in\Sigma^+_1$. By  Lemma
\ref{lem:independenceInFiniteVariables} and Fubini's theorem, there
exists a set $\Sigma^+_2$ of full measure in $\Sigma^+$ such that
for every $\omega \in\Sigma^+_2$ the window  $W(\omega)+h$ is
locally topologically independent in $ L^* \cap (\tilde C  +h)$ for
almost every $h\in  \R$. Set $\Sigma^+_0 :=\Sigma^+_1 \cap
\Sigma^+_2$. Now, consider an arbitrary $\omega \in \Sigma^+_0$.\smallskip

As due to Lemma \ref{l.sequence} the set $\Oplam(\tilde C+h)$ has asymptotic
density $|C| / |\det A|$ for almost every $h\in\R$, we obtain that for almost
every $h\in \R$ the assumptions of Lemma \ref{lem:localTopologicalIndependence}
are satisfied for $W = W(\omega) + h$ and $S= \Oplam (\tilde C + h)$. Therefore,
we then obtain a full measure set $\Xi'(\omega)\ssq\R$ such that for any
$h\in\Xi'(\omega)$ and any $\vartheta\in\R$ there exists an embedded fullshift
in the fibre $\beta^{-1}_\vartheta(\xi)$ for $\xi = [0,h
  -\vartheta]_{\cL}$. Since the existence of an embedded subshift in a fibre is
a property that is invariant under translation by $t$, we then obtain an
embedded fullshift for all $[t,h -\vartheta]_\cL$ with
$(t,h)\in\R^N\times\Xi'(\omega)$. The projection of the latter set to $\T$ gives
the required full measure set $\Xi(\omega)$ that satisfies the assertion (a).

\smallskip

As for (b)   we note that the proven part (a) together with Lemma
\ref{lem:esimpliespositiveh} directly gives  $\htop \geq \frac{|C|
\log 2}{|\det A|}$. On the other hand by the general results of
\cite{HR}  we know that $\htop \leq \frac{|C| \log 2}{\vol(\cL)}$.
Combining these inequalities and using $\vol (\cL) = |\det A|$,  we
arrive at the statement (b).

\smallskip

Statement (c) then follows from general well-known theory. In fact, it is a
direct consequence of Lemma \ref{lem:properandgeneric} combined with (b) of
Lemma \ref{lem:genericrepetitivity} and (b) of Lemma
\ref{l.basic_dynamical_properties}.

\smallskip

 Finally, it remains to show  (d).  The preceding considerations
give almost surely  an embedded fullshift with set of free points
$S$ satisfying $\nu_S = \frac{|C|}{\vol (\cL)}$. Clearly,  the grid
$U$ must be  contained in    $\Oplam([0,1] + h)- t$ for some
$h\in\R$ and $t\in \R$  and hence satisfies
$$\nu_U \leq \nu_{\Oplam([0,1] + h ) - t} \leq
\frac{1}{\vol(\cL)}\ ,$$ where the last inequality follows by uniform
distribution (Theorem \ref{t:uniform-distribution}). This shows
$$\nu_S\  >\  \frac{\nu_U}{2} \ , $$
and Proposition
\ref{prop:UniqueErgodicityDependingOnRelativeDensity} gives the
desired statement.
\end{proof}

\begin{remark} Whenever $W(\omega)$ is proper,  the
dynamical system $(\Omega(\Oplam (W(\omega) +\vartheta)), \R)$ has
the torus $\T$ as its maximal equicontinuous factor and a relatively
dense set of continuous eigenvalues for any $\vartheta \in \R$,
compare Remark \ref{rem:continuousef}.
\end{remark}

 \section{A deterministic construction}
  \label{section:WindowsConsistingOfCantorSets}

In order to prepare for the construction of weak model sets with positive
entropy in the next section, we first provide a deterministic construction of
proper model sets with positive entropy. The starting point of our construction
will be the construction of an initial Cantor set $C_0$ that is adapted to the
respective CPS. To that end, we need to introduce some further notation.

We assume without loss of generality that the matrix $A\in\mathrm{GL}(N+1,\R)$
that defines the lattice $\cL=A(\Z^{N+1})$ is of the form
$A=(a_{i,j})_{i,j=1}^{N+1}$, where $a_{N+1,j}\in (0,1)$ for all $j=1\ld N$ and
$a_{N+1,N+1}=1$. In this case, given any $v=(v_1\ld v_N)\in\Z^N$, there exists a
unique $v_{N+1}\in\Z$ such that
\[
l^*_v \ := \ \pi_2\left(A\cdot
\twovector{v}{v_{N+1}}\right)\ =\ \sum_{j=1}^{N+1} a_{N+1,j}v_j \ \in
\ [0,1)\cap L^* \ .
\]
Note that thus $l^*_v=\sum_{j=1}^{N} a_{N+1,j}v_j \bmod 1$.  Given $v\in\Z^{N}$,
let $\|v\|_\infty=\max_{j=1}^{N} |v_j|$ and fix a numbering $(v(n))_{n\in\N}$ of
$\Z^{N}$ such that $\|v(n)\|_\infty$ is non-decreasing in $n$. Note that this
implies that if we let $\mathcal{N}_t=\Z^N\cap F_t$ and $R_t=\{1\ld (2t+1)^N\}$ for
$t\in\N$, then $v(R_t)=\left\{v(n)\mid n \in R_t\right\} = \mathcal{N}_t$ for all
$t\in\N$.

  \begin{lemma}
   \label{lem:windowsCantorSetsSequence}
    There exists an increasing sequence $(n_k)_{k \in \mathbb{N}} \ssq
    \mathbb{N}$ and a sequence $\{\varepsilon_k\}_{k \in \mathbb{N}} \ssq
    \mathbb{R}_{>0}$ such that the open intervals $I_k = (l^*_{v(n_k)},
    l^*_{v(n_k)}+\varepsilon_k)$ satisfy
   \begin{enumerate}[(i)]
    \item $I_j \cap I_k = \emptyset$ for all $j\neq k$,
    \item $\closure\left(\bigcup_{k \in \mathbb{N}} I_k\right) = [0,1]$,
    \item $\lim_{k \rightarrow \infty} \frac{k}{n_k} > 1/2$.
   \end{enumerate}
  \end{lemma}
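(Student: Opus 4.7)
My plan is to build the intervals greedily: I process $n=1,2,\ldots$ in order and place a new gap $I_k$ with left endpoint $l^*_{v(n)}$ whenever $l^*_{v(n)}$ still lies in the ``available'' region. Concretely, I first fix a summable positive sequence $(\eta_n)_{n\in\N}$ with $\sigma := \sum_n \eta_n < 1/2$ (say $\eta_n = 2^{-n-3}$) and put $\Omega_0 := (0,1)$. At step $n$: if $l^*_{v(n)} \in \Omega_{n-1}$, let $k$ denote one plus the number of indices already selected, set $n_k := n$, take $J$ to be the component of $\Omega_{n-1}$ containing $l^*_{v(n)}$, choose $\varepsilon_k \in (0, \min(\eta_n, \sup J - l^*_{v(n)})]$, define $I_k := (l^*_{v(n)}, l^*_{v(n)}+\varepsilon_k)$, and set $\Omega_n := \Omega_{n-1} \smin \overline{I_k}$; otherwise $\Omega_n := \Omega_{n-1}$.

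Condition (i) is then built into the construction, since each new $I_k$ lies inside a component of $\Omega_{n-1}$, which is disjoint from every previously chosen $\overline{I_j}$. For condition (ii), I will argue by contradiction: if a nonempty open interval $J \ssq [0,1]$ avoided $\bigcup_k I_k$, then $J$ could contain no endpoint of any chosen $I_k$ (otherwise openness of $J$ would force $J \cap I_k \neq \emptyset$), and hence $J \ssq \Omega_n$ for every $n$. Since $L^* = \pi_2(\cL)$ is dense in $\R$, its reduction modulo $1$ makes the sequence $(l^*_{v(n)})_{n\in\N}$ dense in $[0,1)$, so some $l^*_{v(n')}$ lies in $J$; at step $n'$ the algorithm then includes $n'$ and creates an interval $I_k$ with left endpoint in the interior of $J$, contradicting the assumption.

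The main obstacle is condition (iii). By construction $R := \bigcup_k \overline{I_k}$ has Lebesgue measure $\sum_k \varepsilon_k \le \sigma < 1/2$, and an index $n$ is skipped precisely when $l^*_{v(n)} \in \bigcup_{k : n_k < n} \overline{I_k} \ssq R$. To convert this measure-theoretic bound into a counting bound along the ordering by $\|v\|_0$, I will invoke Weyl equidistribution of $(l^*_{v(n)})$ in $[0,1)$: writing $\alpha_j := a_{N+1,j}$, the relevant exponential sums $(2t+1)^{-N}\sum_{\|v\|_\infty \le t} e^{2\pi i h(\alpha\cdot v)}$ factor as products of one-dimensional Dirichlet kernels and tend to zero for every $h \in \Z \smin \{0\}$ precisely when $1,\alpha_1,\ldots,\alpha_N$ span a dense subgroup of $\R$, which is exactly the hypothesis that $L^* = \Z\alpha_1+\cdots+\Z\alpha_N+\Z$ is dense in $\R$ from (S). This yields $\#\{m \le n_k : l^*_{v(m)} \in R\}/n_k \to |R|$, so $(n_k-k)/n_k \to |R| \le \sigma$ and therefore $\lim_{k\to\infty} k/n_k = 1 - |R| \ge 1-\sigma > 1/2$, as required.
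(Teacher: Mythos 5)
Your greedy construction is a reasonable framework, and your verifications of (i) and (ii) are correct: each new $I_k$ sits inside a component of $\Omega_{n-1}$ and is therefore disjoint from all earlier intervals, and the density of $\left(l^*_{v(n)}\right)_{n\in\N}$ in $[0,1)$ (which does follow from the denseness of $L^*$) forces $\bigcup_k I_k$ to meet every open subinterval. The gap is in (iii), at the step where you invoke Weyl equidistribution to conclude that $\sharp\{m\le n_k : l^*_{v(m)}\in R\}/n_k \to |R|$ for $R=\bigcup_k \overline{I_k}$. Equidistribution of a sequence controls visit frequencies only for sets whose boundary is Lebesgue-null, and $R$ is as far from this as possible: by your own property (ii) the closure of $R$ is all of $[0,1]$, while $|R|\le\sigma<1/2$, so $\partial R$ has measure at least $1/2$. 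Worse, $R$ is assembled from intervals whose left endpoints are themselves terms of the sequence; this is exactly the situation in which a set of arbitrarily small measure can capture a positive --- even full --- proportion of an equidistributed sequence (compare $\bigcup_m [x_m, x_m+2^{-m}\delta]$, which has measure at most $2\delta$ but contains every $x_m$). So the asserted limit does not follow, and nothing in your argument prevents a large fraction of the later points $l^*_{v(m)}$ from clustering just to the right of already selected left endpoints and being skipped. Your fixed choice $\eta_n=2^{-n-3}$ gives no protection against this, because the separation scale of $\{l^*_u : u\in C_t\}$ is an arithmetic quantity that can decay much faster than $2^{-n}$ (e.g.\ for Liouville-type coefficients $a_{N+1,j}$), in which case essentially all new points at stage $t$ may lie within distance $\varepsilon_k$ of old ones.

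What is missing is a quantitative collision estimate, and this is precisely what the paper's proof supplies. There the interval based at $v$ is given half-length $\eta_v=\min\{d(l^*_u,0): u\in C_{\kappa(\|v\|_0)}\}/2$ for a rapidly growing $\kappa$, so that any two indices colliding with the same interval must differ by a lattice vector outside $C_{\kappa(\|v\|_0)}$; a covering argument then bounds the number of collisions up to stage $t$ by roughly $\sum_j \sharp C_{2t}/\sharp C_{\kappa(\|v(j)\|_0)}$, and $\kappa$ is chosen so that this sum is at most $\sharp C_t/3$ (estimate (\ref{e.Cantor_construction})). Note that the paper thereby obtains (iii) by pure counting, without using equidistribution or even that $|R|$ is small. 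If you wish to keep your greedy scheme, you must replace the fixed summable sequence $\eta_n$ by such an arithmetically adapted choice and prove the counting bound directly; the measure bound $|R|<1/2$ alone cannot do the job.
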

  \begin{proof}
   For simplicity, we work in the additive group $\mathbb{R} / \mathbb{Z}$ and
   omit to write $\bmod 1$. In other words, by slighly abusing notation we
   automatically interpret real numbers as elements of the circle. In
   particular, we denote by $d(x,0)$ the distance of $x\in\R$ to the nearest
   integer.  We choose a strictly increasing sequence of integers
   $(\kappa(t))_{t\in\N}$ that satisfies
  \begin{equation}\label{e.kappa_choice}
     \sum_{t\in\N} \frac{\sharp \mathcal{N}_{2t}}{\sharp \mathcal{N}_{\kappa(t)}} \ \leq
     \ \frac{1}{2\cdot5^N} \
  \end{equation}
  and let
  \[
      \eta_{v} \ = \ \min\left\{ d(l^*_u,0)/2 \mid u\in \mathcal{N}_{\kappa(\|v\|_\infty)} \cap
      \Z^{N}\right\}
  \]
   and $J_n := \left[l^*_{v(n)},l^*_{v(n)}+\eta_{v(n)}\right)\cap [0,1]$. Then,
     we define
    $$ B := \left\{n \in \mathbb{N} \mid J_n \cap J_j \neq \emptyset \text{ for some
     } j<n \right\} \ . $$ 

We now want to estimate the cardinality of $B\cap R_t$. To that end, note that
if $J_n\cap J_j\neq \emptyset$ and $J_{n'}\cap J_j\neq \emptyset$ for some
$n,n'>j$, then $$d(l^*_{v(n)},l^*_{v(n')}) \ = \ d(l^*_{v(n)-v(n')},0)\ <
\ 2\eta_{v(j)}$$ and therefore $v(n)-v(n')\notin
\mathcal{N}_{\kappa(\|v(j)\|_\infty)}$. Similarly, $v(n)-v(j)\notin
\mathcal{N}_{\kappa(\|v(j)\|_\infty)}$, and the same for $v(n')-v(j)$. Covering $\mathcal{N}_t\smin
\left(\mathcal{N}_{\kappa(\|v(j)\|_\infty)}+v(j)\right)$ by at most $\sharp \mathcal{N}_{2t}/\sharp
\mathcal{N}_{\kappa(\|v(j)\|_\infty)}$ translates of $\mathcal{N}_{\kappa(\|v(j)\|_\infty)}$ for
each $j$ leads to the following rough estimate. 
   \begin{equation}\label{e.Cantor_construction}
     \begin{split}
     \sharp(B \cap R_t) \ & \leq \ \sum_{j=1}^{\sharp \mathcal{N}_t} \sharp \left\{n \in
     \left\{j+1, \dots, \sharp \mathcal{N}_t\right\} \mid J_n\cap J_j\neq\emptyset
     \right\} \\ &\leq \ \sum_{j=1}^{\sharp \mathcal{N}_t} \frac{\sharp \mathcal{N}_{2t}}{\sharp
       \mathcal{N}_{\kappa(\|v(j)\|_\infty)}} \ \leq \ \sharp \mathcal{N}_{2t} \sum_{k=1}^t
     \frac{\sharp \mathcal{N}_k}{\sharp \mathcal{N}_{\kappa(k)}}
     \ \stackrel{(\ref{e.kappa_choice})}{\leq} \ \frac{\sharp \mathcal{N}_{2t}}{2\cdot
       5^N} \ \leq \ \frac{\sharp \mathcal{N}_t}{2} \ .
     \end{split}
   \end{equation}
   Now let $n_1 = 0$ and define
    $$n_{k+1} = \min \{n > n_k \mid J_n \cap J_{n_j} = \emptyset \text{ for all
   } j \leq k \}. $$ Then by defining $\varepsilon_k =
   \min\{\eta_{v(n_k)},1-l^*_{v(n_k)}\}$ and thus $I_k = J_{n_k}$, property (i)
   follows by construction. Likewise, it is clear that the union of the $I_k$ is
   dense in $[0,1]$. Otherwise, there would be some interval $(a,b)\ssq[0,1]$
   which does not intersect any of the $I_k$. However, in this case any interval
   $J_n$ that is contained in $(a,b)$ would have to appear as some $I_k$ by the
   above construction, leading to a contradiction. Note here that the image of
   $\Z^N$ under the mapping $v\mapsto l^*_v$ is dense in $[0,1]$, so that
   eventually one of the intervals $J_n$ needs to be contained in $(a,b)$.

   Further, we have that
    $$\widehat{B} \ := \ \mathbb{N} \setminus \{n_k \mid k \in \mathbb{N} \}
   \ \ssq \ B. $$ 
   Hence, if we let $\cN=\{n_k\mid k\in\N\}$, then this implies that
  \[
    \sharp\left(\cN\cap R_t\right) \ \geq \ \sharp R_t-\sharp(R_t\cap B) \ \geq
    \ \sharp R_t/2 \ .
  \] 
  If we use in addition that $\lim_{t\to\infty} \sharp(R_t\smin R_{t-1})/\sharp
  R_t=\lim_{t\to\infty} \sharp(\mathcal{N}_t\smin \mathcal{N}_{t-1})/\sharp \mathcal{N}_t = 0$, this yields
  that 
  \[
     \lim_{m\to\infty} \sharp(\cN\cap \{1\ld m\})/m \ \geq \ 1/2 
 \] which in turn implies property $(iii)$.  
  \end{proof}

  Note that
  \[
   C_0 \ = \ [0,1] \setminus \bigcup_{k \in \mathbb{N}} I_k
  \] is a Cantor
  set, since all intervals $I_k$ are pairwise disjoint and their union is dense
  in the circle. It should also be pointed out that property (iii) of the
  preceding lemma implies that $C_0$ has positive measure, but we will not make
  explicit use of this fact.

  \begin{lemma}
   \label{lem:windowsCantorSetConstruction}
   Let $C$ be a Cantor set in $[0,1]$ such that $\{0,1\} \ssq C$. Then there
   exists a sequence of open sets $A_j \ssq [0,1]$ such that
   \begin{enumerate}[(i)]
    \item for all $j \in \mathbb{N}$ the set $A_j$ is a union of gaps of $C$,
    \item $\partial A_j = C$ for all $j \in \mathbb{N}$,
    \item the family $\jfolge{A_j}$ is locally topologically independent in $0$.
   \end{enumerate}
  \end{lemma}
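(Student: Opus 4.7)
The plan is to encode each $A_j$ by assigning to every gap $G$ of $C$ a pattern $\sigma(G)\in\{0,1\}^{\N}$ and setting $A_j:=\bigcup\{G\mid \sigma(G)_j=1\}$. Condition (i) is then automatic, and the inclusion $\partial A_j\ssq C$ in (ii) will follow at once from the observation that a point inside a gap $G\not\ssq A_j$ has a neighborhood in $G$ disjoint from $A_j$. The remaining inclusion $C\ssq \partial A_j$ reduces to the statement that, for every $c\in C$ and every $j\in\N$, gaps $G$ with $\sigma(G)_j=1$ accumulate at $c$. Condition (iii), after observing that $\bigcup\{G\mid \sigma(G)|_F=a\}$ is open and contained in $\interior\left(\bigcap_{j\in F:a_j=1}A_j\smin\bigcup_{j\in F:a_j=0}A_j\right)$, reduces to: for every finite $F\ssq\N$ and every $a\in\{0,1\}^F$, gaps $G$ with $\sigma(G)|_F=a$ accumulate at $0\in C$.

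To build $\sigma$, I would first fix a countable dense subset $D=\{d_m\mid m\in\N\}$ of $C$ with $d_1=0$ and, by a greedy induction over $\N^2$, select pairwise distinct gaps $G^{(m)}_k$ of $C$ with $G^{(m)}_k\to d_m$ as $k\to\infty$; this is possible because $C$ is perfect and nowhere dense, so infinitely many gaps cluster at every $d_m$ and only finitely many choices have to be avoided at each step. Next I would enumerate all finite $\{0,1\}$-words $\left(s^{(i)}\right)_{i\in\N}$ in such a way that each word appears infinitely often, set $\sigma(G^{(1)}_k):=s^{(k)}$ padded with zeros, set $\sigma(G^{(m)}_k):=(1,1,\dots)$ for all $m\geq 2$, and assign the zero pattern to every remaining gap. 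Condition (ii) at $c\neq 0$ is then delivered by the density of $\{d_m\mid m\geq 2\}$ in $C$ (which holds since $C$ is perfect) combined with $G^{(m)}_k\ssq A_j$ for all $m\geq 2$ and all $j$, while (ii) at $c=0$ is delivered by the infinitely many $k$ for which $s^{(k)}_j=1$. For (iii), given $F$ and $a$, extend $a$ to a word $a^*\in\{0,1\}^{\max F}$ by zeroing the coordinates outside $F$; the enumeration property produces infinitely many $k$ with $s^{(k)}=a^*$, and for each such $k$ the gap $G^{(1)}_k$ sits inside the required difference set and approaches $0$.

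The main obstacle I anticipate is the apparent tension between (ii) and (iii). Condition (iii) forces the patterns of gaps clustering at $0$ to realise every possible finite binary prefix and therefore, for each $j$, produces many gaps near $0$ on which $\sigma(G)_j=0$; condition (ii) demands, for every $j$, that gaps with $\sigma(G)_j=1$ cluster at every $c\in C$, including at $0$. Splitting the chosen gaps into the two sub-families $\left(G^{(1)}_k\right)_k$ and $\left(G^{(m)}_k\right)_k$ for $m\geq 2$, carrying respectively the ``universal-prefix'' patterns and the all-ones pattern, decouples the two demands: (iii) is served entirely by the first sub-family, (ii) away from $0$ entirely by the second, and (ii) at $0$ by those $s^{(k)}$ with $s^{(k)}_j=1$, which occur infinitely often by choice of enumeration.
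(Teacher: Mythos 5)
Your proposal is correct and follows essentially the same strategy as the paper's proof: a ``backbone'' family of gaps included in every $A_j$ and accumulating at every point of $C$ forces $\partial A_j=C$, while a designated sequence of gaps tending to $0$ whose membership patterns realize every finite $\{0,1\}$-word infinitely often yields the local topological independence. The only differences are cosmetic --- you work with the abstract Cantor set and a word enumeration with infinite repetition, whereas the paper first reduces to the middle-third Cantor set and encodes the patterns via a partition of $\N$ into infinite sets indexed by pairs of disjoint finite sets.
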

  \begin{proof}
   For any two Cantor sets $C, C' \ssq [0,1]$ with $\{0,1\} \ssq C
   \cap C'$ exists an orientation-preserving homeomorphism of $[0,1]$ which maps
   $C$ to $C'$. So without loss of generality, we may assume $C$ that is the
   middle third Cantor set.  Then we can write
    $$ C = \left\{ \left. \sum_{n=1}^\infty 2a_n3^{-n} \right| a \in \{0,1\}^\mathbb{N}
   \right\}. $$ Let $\mathcal{A} = \bigcup_{n \in \mathbb{N}}\{0,1\}^n$ and
   denote by $\vert a \vert$ the length of $a \in \mathcal{A}$. Then
   \begin{equation}\label{e.Cantor_gaps} G_a \  = \  \left( \sum_{n=1}^{\vert a \vert} 2a_n3^{-n}+3^{-n},
   \sum_{n=1}^{\vert a \vert} 2a_n3^{-n} + 2 \cdot 3^{-\vert a \vert }
   \right)
   \end{equation}
   are exactly the gaps of $C$.  We will construct the sets $A_j$ such that they
   all contain
    $$ A \  = \  \bigcup_{a \in \mathcal{A} : \vert a \vert \in 4\mathbb{N}} G_a $$
   but no $G_a$ with $|a| \in 4\mathbb{N}+1$. Since all points of $C$ are
   approximated by gaps of both types, we always have $\partial A_j = C$. Thus,
   properties $(i)$ and $(ii)$ hold.

 Let $a^{(n)} = 0^{2n+1}1 \in \{0,1\}^{2n+2}$. Choose a countable partition
 $(S_j)_{j \in \mathbb{N}}$ of $\mathbb{N}$ into infinite sets. Further, let
 $((M_j, N_j))_{j \in \mathbb{N}}$ be a numbering of all pairs of disjoint
 finite sets of integers. Then let
    \begin{eqnarray*}
   V_j & = & \bigcup_{n \in \mathbb{N}: j \in M_n} S_n \\
   A_j & = & A \cup \bigcup_{l \in V_j} G_{a^{(l)}} \ .
   \end{eqnarray*}
 For any $n \in \mathbb{N}$
   the set $S_n$ is a subset of all $V_j$ with $j \in M_n$ and disjoint from all
   $V_j$ with $j \in N_n$. Thus, the set
    $$\bigcap_{j \in M_n} A_j \setminus \bigcup_{j \in N_n} A_j $$ contains
   $\bigcup_{l \in S_n} G_{a^{(l)}}$. Since $S_n$ is infinite, this shows the local
   topological independence required in condition $(iii)$.
  \end{proof}

  Now let $C_0 = [0,1] \setminus \bigcup_{k \in \mathbb{N}} I_k$ as above and
  define a window $W$ by
  \begin{equation}\label{e.deterministic_window}
    W \ = \ C_0 \cup \bigcup_{k \in \mathbb{N}} \left(I_k \cap \closure(A_k +
    \inf(I_k))\right) \ .
  \end{equation}
  Note that $\inf(I_k) = l^*_{n_k}$ by construction. Due to
   $$ W = \closure \left( \bigcup_{k \in \mathbb{N}} I_k \cap W \right) =
   \closure \left( \bigcup_{k \in \mathbb{N}} I_k \cap \closure(A_k + \inf(I_k))
   \right) = \closure(\interior(W))$$ the window is
   proper.

   \begin{theorem} \label{t.deterministic} Let $(\R^N,\R,\cL)$ be a CPS and $W$ as
     in (\ref{e.deterministic_window}). Suppose
     $\beta_\vartheta:\Omega(\Oplam(W+\vartheta))\to\T$ is the corresponding
     flow morphism from \eqref{eq:DDSFlowMorphism} and \eqref{r.flow_morphism}.
     Further, choose $S := (l_{v(n_k)})$, where $(n_k)_{k \in \mathbb{N}}$ is
     chosen as in Lemma \ref{lem:windowsCantorSetsSequence} and $l_{v(n_k)}$ is
     defined by $(l_{v(n_k)},l^*_{v(n_k)})\in\cL$. Then the following holds:
   \begin{enumerate}[(a)]
   \item For all $\vartheta\in\R$, the pair $(\Xi,S)$ with
     $\Xi=\beta_\vartheta^{-1}([0,-\vartheta]_\cL)$ is an embedded fullshift,
     with the set $U$ from Definition~\ref{def:DSSubshift} given by
     $U=\Lambda(W)$. In particular, $(\Omega(\Oplam(W+\vartheta)),
     \mathbb{R}^N)$ has positive topological entropy for all $\vartheta \in
     \mathbb{R}$.
    \item The system $(\Omega(\Oplam(W+\vartheta)),\mathbb{R}^N)$ is not
      uniquely ergodic.
   \end{enumerate}
 \end{theorem}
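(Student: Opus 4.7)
The plan is to apply Lemma \ref{lem:localTopologicalIndependence} with $h=0$ and $S$ as in the statement. This will produce, for every $\vartheta \in \R$, an embedded fullshift in the appropriate fibre of $\beta_\vartheta$; combined with Lemma \ref{lem:esimpliespositiveh} this gives (a), and combined with Proposition \ref{prop:UniqueErgodicityDependingOnRelativeDensity} it will give (b) once the density bound $\nu_S > |W|/(2\vol(\cL))$ is established. Three of the four hypotheses of Lemma \ref{lem:localTopologicalIndependence} are straightforward: $W$ is proper (already shown in the paragraph preceding the theorem); $S \ssq \Oplam(W)$ because $(l_{v(n_k)})^\ast = \inf I_k \in C_0 \ssq W$; and $S$ has positive asymptotic density by combining the refined bound $\sharp \widehat B \cap C_t^{\Z^N} \leq \sharp C_t^{\Z^N}/3$ from the proof of Lemma \ref{lem:windowsCantorSetsSequence} (which gives lower density $\geq 2/3$ for $\{v(n_k)\}$ in the $\|\cdot\|_0$-cube ordering on $\Z^N$) with Theorem \ref{t:uniform-distribution} applied to $[0,1)$, so that $\nu_S \geq 2/(3|\det A|)$.

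The central remaining step is the verification of local topological independence of $W$ in $S^\ast = \{\inf I_k : k\in\N\}$. Fix a finite $F\ssq\N$, $a\in\{0,1\}^F$, and set $\varepsilon_\ast = \min_{k\in F}\varepsilon_k > 0$. For $x\in (0,\varepsilon_\ast)$ and $k\in F$, the point $\inf I_k + x$ lies in $I_k$ (since $x < \varepsilon_k$) and, by the disjointness in Lemma \ref{lem:windowsCantorSetsSequence}(i), in no other $I_j$. Therefore \eqref{e.deterministic_window} gives $x \in W-\inf I_k \iff x \in \closure(A_k) = A_k \cup C$, where $C$ is the reference Cantor set used in Lemma \ref{lem:windowsCantorSetConstruction} and $A_k \cap C = \emptyset$. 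Since the $C$-contributions cancel when taking intersections minus unions, on $(0,\varepsilon_\ast)$ one has
\[
\bigcap_{k\in F:a_k=1}(W-\inf I_k)\,\smin\,\bigcup_{k\in F:a_k=0}(W-\inf I_k)\ =\ \bigcap_{k\in F:a_k=1}A_k\,\smin\,\bigcup_{k\in F:a_k=0}A_k ,
\]
and Lemma \ref{lem:windowsCantorSetConstruction}(iii) places $0$ in the closure of the interior of the right-hand side, which is the required local topological independence.

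Applying Lemma \ref{lem:localTopologicalIndependence} now delivers the embedded fullshifts in the fibres of $\beta_\vartheta$ for every $\vartheta \in \R$, and Lemma \ref{lem:esimpliespositiveh} yields $\htop \geq \nu_S \log 2 > 0$, proving (a). For (b), Remark \ref{rem:es}(d) together with Theorem \ref{t:uniform-distribution} gives $\nu_U \leq |W|/|\det A| \leq 1/|\det A|$ for the grid $U$ of the embedded fullshift, so $\nu_S \geq 2/(3|\det A|) > |W|/(2\vol(\cL)) \geq \nu_U/2$, and Proposition \ref{prop:UniqueErgodicityDependingOnRelativeDensity} gives non-unique-ergodicity. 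The main technical subtlety is the density transfer in the first paragraph: the $\|\cdot\|_0$-cube lower-density bound $\geq 2/3$ lives on $\Z^N$, while $\nu_S$ is measured in $\R^N$-cubes via the map $v \mapsto l_v$, which is linear only up to the bounded $v_{N+1}$-correction; a careful application of Theorem \ref{t:uniform-distribution}, invoked both for $\Oplam([0,1))$ and for the selection within large $\R^N$-cubes, is required to bridge this gap cleanly.
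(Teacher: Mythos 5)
Your proposal follows essentially the same route as the paper's own proof: reduce local topological independence of $W$ in $S^*$ to that of the family $(A_k)$ from Lemma~\ref{lem:windowsCantorSetConstruction}, apply Lemma~\ref{lem:localTopologicalIndependence} (with $h=0$) and Lemma~\ref{lem:esimpliespositiveh} for part (a), and combine the density bound from Lemma~\ref{lem:windowsCantorSetsSequence}(iii) with $|W|\leq 1$ and Proposition~\ref{prop:UniqueErgodicityDependingOnRelativeDensity} for part (b). You merely spell out in more detail the two points the paper leaves implicit -- the disjointness argument showing $(W-\inf I_k)\cap(0,\varepsilon_*)=\closure(A_k)\cap(0,\varepsilon_*)$, and the transfer of the $\Z^N$-density of $(v(n_k))$ to the asymptotic density $\nu_S$ in $\R^N$ -- and your treatment of both is consistent with (indeed slightly more careful than) the paper's.
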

  \begin{proof}
   By construction, the local topological independence of $W$ in $S^*$ is
   equivalent to the local topological independence of the sets $\kfolge{A_k}$
   and thus follows from
   Lemma~\ref{lem:windowsCantorSetConstruction}(iii). Hence, by Lemma
   \ref{lem:localTopologicalIndependence}, $\beta^{-1}([0,-\vartheta]_\cL)$
   contains an embedded fullshift. This proves $(a)$.

 To prove statement $(b)$, observe that with the notation introduced before
 Lemma~\ref{lem:windowsCantorSetsSequence} we have that $U':=\{l_v \mid
 v\in\Z^N\}= \Lambda([0,1])$. Further $\nu_{U'}=1/\det(A)$ by
 Theorem~\ref{t:uniform-distribution} (where part (b) is applied to the window
 $(0,1)$ to obtain a lower estimate). As $U\ssq U'$, we have that $\nu_U\leq
 1/\det(A)$. At the same time, it follows directly from
 Lemma~\ref{lem:windowsCantorSetsSequence}(iii) that
 \[
     \nu_S \ \geq \ \nu_{U'}/2 = \frac{1}{2\det(A)} \ \geq \ \nu_U/2 \ .
 \]
 Hence, $(\Omega(\Oplam(W+\vartheta)),\R^N)$ cannot be uniquely ergodic by
 Proposition~\ref{prop:UniqueErgodicityDependingOnRelativeDensity}.
 \end{proof}

 \section{Weak model sets with positive entropy}
  \label{section:WindowsWithEmptyInterior}

  In this section, we will modify the construction of the previous section
  \ref{section:WindowsConsistingOfCantorSets} such that the resulting window $W$
  has an empty interior, but the dynamical system $(\Omega(\Oplam(W)),
  \mathbb{R}^N)$ still has positive topological entropy. Note that in this case
  we are not dealing with Delone sets.

  \begin{lemma}
   \label{lem:setWithoutInteriorConstruction}
   Let $C \ssq [0,1]$ be the middle third Cantor set. Then there exists a
   sequence of sets $A_j \ssq [0,1]$ such that
   \begin{enumerate}[(i)]
    \item $C \ssq \partial A_j$ for all $j \in \mathbb{N}$,
    \item $\interior(A_j) = \emptyset$ for all $j \in \mathbb{N}$,
    \item the family \jfolge{A_j} is locally metrically independent in $0$.
   \end{enumerate}
  \end{lemma}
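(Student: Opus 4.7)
The plan is to mimic the construction in the proof of Lemma \ref{lem:windowsCantorSetConstruction}, but with each open gap $G_a$ replaced by a fat Cantor subset $\tilde G_a\ssq G_a$ of positive measure and empty interior. The fact that fat Cantor sets have empty interior will automatically yield property (ii), while positive measure is exactly what is needed to promote topological independence to metric independence in property (iii).

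For each finite binary word $a\in\mathcal{A}$ labelling a gap of $C$ via \eqref{e.Cantor_gaps}, I fix once and for all a fat Cantor subset $\tilde G_a\ssq G_a$ with $0<|\tilde G_a|$, $\interior(\tilde G_a)=\emptyset$, and both endpoints of $G_a$ lying in $\closure(\tilde G_a)$. Following the proof of Lemma \ref{lem:windowsCantorSetConstruction}, let $(S_n)_{n\in\N}$ be a partition of $\N$ into infinite sets, $((M_n,N_n))_{n\in\N}$ an enumeration of the pairs of disjoint finite subsets of $\N$, and $V_j=\bigcup_{n:j\in M_n}S_n$. Choose free labels $a^{(l)}=0^{2l}1$, so that $|a^{(l)}|$ is odd and $G_{a^{(l)}}\to\{0\}$ as $l\to\infty$. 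Define
\[
A_j \ := \ \bigcup_{a\in\mathcal{A}:\ |a|\text{ even}} \tilde G_a \ \cup \ \bigcup_{l\in V_j} \tilde G_{a^{(l)}}.
\]
The fact that the $a^{(l)}$ all have odd length ensures that the second union is genuinely controlled by $V_j$ and disjoint from the first.

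The three properties are then verified as follows. For (i): any $c\in C$ is accumulated by gaps $G_a$ of arbitrarily large even length, each contributing a non-empty $\tilde G_a\ssq A_j$ with $\tilde G_a\ssq G_a$ shrinking to $\{c\}$, so $c\in\closure(A_j)$; and since $A_j\ssq[0,1]\smin C$ we have $c\in C\ssq\closure([0,1]\smin A_j)$ trivially. For (ii): any non-empty open $U\ssq[0,1]$ either meets $C$, in which case it contains points of $C\ssq[0,1]\smin A_j$, or is contained in a single gap $G_a$, in which case $U\cap A_j\ssq\tilde G_a$ has empty interior. For (iii): given a finite $F\ssq\N$ and $a\in\{0,1\}^F$, pick $n$ with $M_n=\{j\in F:a_j=1\}$ and $N_n=\{j\in F:a_j=0\}$; since $S_n$ is infinite and $G_{a^{(l)}}\to\{0\}$, arbitrarily close to $0$ one finds $l\in S_n$ such that $\tilde G_{a^{(l)}}\ssq A_j$ for every $j\in M_n$ and $\tilde G_{a^{(l)}}\cap A_j=\emptyset$ for every $j\in N_n$ (using that $(S_n)$ is a partition to ensure $l\notin V_j$ when $j\in N_n$, together with the parity of $|a^{(l)}|$). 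The positive measure of $\tilde G_{a^{(l)}}$ then delivers the required local metric independence at $0$.

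The main obstacle, if there is one, is combinatorial bookkeeping: the parities of the word lengths must be arranged so that the ``free'' gaps $G_{a^{(l)}}$ are distinguished from the ``base'' gaps always contained in $A_j$, while at the same time $G_{a^{(l)}}\to\{0\}$. The construction of the fat Cantor subsets $\tilde G_a$ themselves is standard and poses no real difficulty.
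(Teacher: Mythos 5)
Your proposal is correct and follows essentially the same route as the paper: the same combinatorial scheme (partition $(S_n)$, enumeration of disjoint pairs $(M_n,N_n)$, the sets $V_j$, and free gaps $G_{a^{(l)}}$ accumulating at $0$), with topological independence upgraded to metric independence by inserting positive-measure nowhere dense sets into the gaps. The only cosmetic differences are that the paper uses a single auxiliary fat Cantor set $M$ with $0$ in its essential closure, translated into each free gap, and takes $C$ itself as the common base, whereas you choose a separate fat Cantor subset $\tilde G_a$ of each gap and use the even-length gaps as the base; both variants verify (i)--(iii) by the same arguments.
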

  \begin{proof}
   We can write
    $$ C = \left\{ \left. \sum_{n=1}^\infty 2a_n3^{-n} \right| a \in \{0,1\}^\mathbb{N}
    \right\}. $$ As before, let $\mathcal{A} = \bigcup_{n \in
      \mathbb{N}}\{0,1\}^n$ and denote by $\vert a \vert$ the length of $a \in
    \mathcal{A}$ and by $G_a$ the gap of $C$ corresponding to $a$.  Let $K$ be
    another Cantor set in $[0,1]$ such that $\{0,1\} \ssq K$, $\vert K \vert >
    0$ and $0 \in \closess(K) = \{x \in \mathbb{R} \mid \vert B_\varepsilon(0)
    \cap K \vert > 0 \text{ for all } \varepsilon > 0 \}$.  We will construct
    the sets $A_j$ such that each set contains $C$ and, to ensure metric
    independence, we insert $K$ into the gaps of $C$.  Thus, let again $a^{(n)} =
    0^{2n+1}1 \in \{0,1\}^{2n+2}$ and choose a countable partition $(S_j)_{j \in
      \mathbb{N}}$ of $\mathbb{N}$ into infinite sets. Further, let $(M_j,
    N_j)_{j \in \mathbb{N}}$ be a numbering of all pairs of disjoint finite sets
    of integers. Then let
    $$V_j := \bigcup_{n \in \mathbb{N}: j \in M_n} S_n $$
   and
    $$A_j := C \cup \bigcup_{l \in V_j}\left( G_{a^{(l)}} \cap (M +
   \inf(G_{a^{(l)}}))\right).$$ Then conditions (i) and (ii) follow again by
   construction. Further, for any $n,j \in \mathbb{N}$ the set $S_n$ is a subset
   of $V_j$ whenever $j \in M_n$ and disjoint from $V_j$ whenever $j \in N_n$.
   Since $S_n$ is infinite, for any $\varepsilon > 0$ there exists $l \in S_n$
   such that $G_{a^{(l)}} \ssq B_\varepsilon(0)$. Since $0 \in \closess(K)$, the
   set $G_{a^{(l)}} \cap (K + \inf G_{a^{(l)}})$ has positive measure. Thus, as
    $$G_{a^{(l)}} \cap (K + \inf (G_{a^{(l)}})) \ssq B_\varepsilon(0) \cap \left(
 \bigcap_{j \in M_n} A_j \setminus \bigcup_{j \in N_n} A_j \right), $$ the set
 on the right has positive measure. Since this holds for all $\eps>0$ and the
 pair $(M_n,N_n)$ was arbitrary, this shows the metric independence of
 family $\jfolge{A_j}$.
  \end{proof}

  Now, let $(n_k)_{k \in \mathbb{N}}$ and the intervals $I_k$ be as in Lemma
  \ref{lem:windowsCantorSetsSequence}. As in the previous section, let $C_0 =
      [0,1] \setminus \bigcup_{k \in \mathbb{N}} I_k$. Define a window $W$ of
      empty interior by
  \begin{equation} \label{e.meager_window} W \ = \ C_0 \cup \bigcup_{k \in
      \mathbb{N}}\left( I_k \cap (\inf(I_k) + A_k)\right) \ .
  \end{equation}
  Note, that we have $\inf(I_k) = l^*_{v(n_k)}$ by construction of the $I_k$.

  \begin{theorem} \label{t.weak} Let $(\R^N,\R,\cL)$ be a CPS and $W$ as in
    (\ref{e.meager_window}).  Further, choose $S = (l_{n_k})$ as in
    Theorem~\ref{t.deterministic}.

Then for almost all $\vartheta \in \mathbb{R}$ the hull
$\Omega(\Oplam(W+\vartheta))$ contains an embedded fullshift and
$(\Omega(\Oplam(W+\vartheta)),\mathbb{R})$ has positive topological
entropy.
 \end{theorem}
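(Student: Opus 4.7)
The plan is to verify the hypotheses of Lemma~\ref{lem:localMetricIndependence} with $h=0$, which will yield an embedded fullshift in $\Omega(\Oplam(W+\vartheta))$ for Lebesgue-almost every $\vartheta\in\R$; the positive topological entropy then follows from Lemma~\ref{lem:esimpliespositiveh}. Thus I must check that $S\ssq\Oplam(W)$, that $S^*$ is relatively compact, that $S$ has positive asymptotic density, and, crucially, that $W$ is metrically independent in $S^*$.

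The first three hypotheses are essentially routine. Since each $l^*_{v(n_k)}$ lies in $C_0\ssq W$, one has $S\ssq\Oplam(W)$, and $S^*\ssq[0,1]$ is trivially relatively compact. For the density of $S$, I would combine property~(iii) of Lemma~\ref{lem:windowsCantorSetsSequence}, which gives $\liminf_k k/n_k>1/2$, with the fact that the numbering $(v(n))$ exhausts $\Z^N$ in order of $\|\cdot\|_0$ and that $L=\pi_1(\cL)$ has density $1/|\det A|$ by uniform distribution (Theorem~\ref{t:uniform-distribution}); this yields $\nu_S>0$ (in fact $\nu_S>1/(2|\det A|)$, though only positivity is required here).

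The main step is the metric independence. The construction of $W$ in (\ref{e.meager_window}) is arranged so that inside each interval $I_k=(l^*_{v(n_k)},l^*_{v(n_k)}+\varepsilon_k)$ the window coincides with $l^*_{v(n_k)}+A_k$, and the intervals $I_k$ are pairwise disjoint by Lemma~\ref{lem:windowsCantorSetsSequence}(i). Consequently, for each $k\in\N$,
\[
(W-l^*_{v(n_k)})\cap(0,\varepsilon_k) \ = \ A_k\cap(0,\varepsilon_k).
\]
Given any finite $F\ssq\N$ and any $a\in\{0,1\}^F$, choosing $\delta<\min_{k\in F}\varepsilon_k$ identifies, on $(0,\delta)$, the Boolean combination of the translates $W-l^*_{v(n_k)}$ prescribed by $a$ with the corresponding Boolean combination of the sets $A_k$. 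By the local metric independence of the family $(A_j)$ in $0$ (Lemma~\ref{lem:setWithoutInteriorConstruction}(iii)), this latter set has positive Lebesgue measure in every neighbourhood of $0$, which yields the required metric independence of $W$ in $S^*$.

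With all hypotheses verified, Lemma~\ref{lem:localMetricIndependence} delivers an embedded fullshift in $\Omega(\Oplam(W+\vartheta))$ for almost every $\vartheta\in\R$, and Lemma~\ref{lem:esimpliespositiveh} then gives positive topological entropy. I expect the only delicate point to be the clean identification of $W-l^*_{v(n_k)}$ with $A_k$ on the right neighbourhood $(0,\varepsilon_k)$, which rests on carefully tracking the disjointness of the $I_k$ and the definition of $W$ on each $I_k$; once this local identification is made, the conclusion follows mechanically from results already in place.
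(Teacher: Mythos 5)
Your proposal is correct and follows essentially the same route as the paper: the paper's proof of Theorem \ref{t.weak} likewise reduces the metric independence of $W$ in $S^*$ to the metric independence of the family $(A_k)$ from Lemma \ref{lem:setWithoutInteriorConstruction}(iii) and then invokes Lemma \ref{lem:localMetricIndependence} (with the density and entropy points handled exactly as in Theorem \ref{t.deterministic}). Your explicit identification $(W-l^*_{v(n_k)})\cap(0,\varepsilon_k)=A_k\cap(0,\varepsilon_k)$, resting on the disjointness of the $I_k$, is precisely the content of the paper's terse phrase ``by construction''.
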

Note that as the window has empty interior in this case, the hull
$\Omega(\Lambda+\vartheta)$ contains the empty set and therefore the fact that
the action cannot be uniquely ergodic is obvious.
 \begin{proof}
   By construction, the metric independence of $W$ in $S^*$ is equivalent to the
   metric independence of the sets $\kfolge{A_k}$ and thus follows from Lemma
   \ref{lem:setWithoutInteriorConstruction}(iii). Hence, by Lemma
   \ref{lem:localMetricIndependence}, $\Omega(\Oplam(W+\vartheta)$ contains an
   embedded fullshift for almost all $\vartheta \in \mathbb{R}$ (compare the
   proof of Theorem~\ref{t.deterministic}).
 \end{proof}

 \begin{remark}
   \label{r.weak_model_fibres}  Similar as in
   Lemma~\ref{lem:localTopologicalIndependence}, one may show that the embedded
   fullshift $\Xi$ which is obtained is contained in $\Oplam(W+\vartheta)$ (that
   is, $\Gamma\ssq\Oplam(W+\vartheta)$ for all $\Gamma\in\Xi$). In the case of
   proper model sets, this was used further to conclude that $\Xi$ is contained
   in the fibre $\beta^{-1}([\vartheta,0]_\cL)$. However, for weak model sets
   there is not analogous statement to that, since a torus parametrisation does
   not exist in this case.
 \end{remark}

\section{Remarks on higher-dimensional internal groups} \label{HigherDimInternal}

In the previous sections, we have concentrated on examples of positive entropy
model sets with one-dimensional internal group $H=\R$. While this makes the
constructions easier on a technical level and allows to avoid heavy notation, it
is also possible to produce similar examples with higher-dimensional internal
groups. There is also a certain motivation for this. The eigenvalues of the
continuous dynamical eigenfunctions in Remark~\ref{rem:continuousef} are those
of the underlying Kronecker flow on the torus $(G\times H)/\cL$. Hence,
increasing the dimension of the internal group leads to a richer spectrum of
continuous eigenfunctions, while keeping the dimension of the direct space
constant.

The analysis in the case of a one-dimensional internal group $H=\R$ is
simplified by the fact that in this situation the boundary of a proper window is
always a Cantor set. In higher dimensions, this boundary also needs to contain
non-trivial connected components, and there is a much greater variety of
possible structures. For this reasons, general statements as the one in
Theorem~\ref{t:random_window-extended} (which starts with an arbitrary Cantor
set) may be more difficult to make. However, when it comes to the construction
of specific examples, the arguments employed in the previous sections can be
adapted with only minor modifications. For instance, a random construction
analogous to that in Theorem~\ref{t:random_window-extended} may be carried out
by starting with a Sierpinski carpet of positive measure, labelling the squares
which were removed in the construction of the carpet and including each of them
in the window independently with probability $1/2$. The proof of
Theorem~\ref{t:random_window-extended} could then easily be modified to show
that the Delone dynamical system on the hull of the resulting model set almost
surely has positive entropy.

Deterministic constructions as in
Sections~\ref{section:WindowsConsistingOfCantorSets} and
\ref{section:WindowsWithEmptyInterior} can equally be carried out with
higher-dimensional $H$. In this case, one would have to start with the
projection of a fundamental domain of the lattice $\cL$ to $H$ and the remove
neighbourhoods of rapidly decreasing size around points in $L^*$ to obtain an
initial Cantor set $C_0$ (compare
Lemma~\ref{lem:windowsCantorSetsSequence}). Pasting in locally topologically
independent sets into these `holes' will then again lead to (weak) model sets
with positive entropy.



\end{document}